\pgfplotsset{compat=1.15}
\newtheorem{theorem}{Theorem}[section]
\newtheorem{proposition}[theorem]{Proposition}
\newtheorem{corollary}[theorem]{Corollary}
\newtheorem{lemma}[theorem]{Lemma}
\theoremstyle{definition}
\newtheorem{remark}[theorem]{Remark}
\newtheorem{definition}[theorem]{Definition}
\DeclareMathOperator{\Real}{Re}
\DeclareMathOperator{\Imaginary}{Im}
\renewcommand{\Re}{\Real}
\renewcommand{\Im}{\Imaginary}
\newcommand{\Lie}{\mathcal{L}}
\newcommand{\vol}{\mathrm{vol}}
\renewcommand{\epsilon}{\varepsilon}
\title[Discrete Laplacians]{Discrete Laplacians -- spherical and hyperbolic}
\author[I.Izmestiev]{Ivan Izmestiev}
\address{TU Wien,Wiedner Hauptstraße 8-10/104, A-1040 Wien, Austria}
\email{izmestiev@dmg.tuwien.ac.at}
\author[W.Lam]{Wai Yeung Lam}
\address{Department of Mathematics, University of Luxembourg, Maison du nombre, 6 avenue de la Fonte, L-4364 Esch-sur-Alzette, Luxembourg}
\email{wyeunglam@gmail.com}
\thanks{
This research was funded in part by the Austrian Science Fund (FWF) 10.55776/F77. The second author was partially supported by the FNR grant CoSH O20/14766753. For open access purposes, the authors have applied a CC BY public copyright license to any authors accepted manuscript version arising from this submission. This is the Author Accepted Manuscript version of an article published in final form at \href{https://doi.org/10.1112/jlms.70235}{https://doi.org/10.1112/jlms.70235}.
}
\subjclass[2020]{Primary 05C50, 52C25, 52C26; Secondary 05C10, 53A70, 31C20}
\begin{document}

\begin{abstract}
The discrete Laplacian on Euclidean triangulated surfaces is a well-established notion.
We introduce discrete Laplacians on spherical and hyperbolic triangulated surfaces.
On the one hand, our definitions are close to the Euclidean one in that the edge weights contain the cotangents of certain combinations of angles and are non-negative if and only if the triangulation is Delaunay.
On the other hand, these discretizations are structure-preserving in several respects.
We prove that the area of a convex polyhedron can be written in terms of the discrete spherical Laplacian of the support function, whose expression is the same as the area of a smooth convex body in terms of the usual spherical Laplacian.
We show that the conformal factors of discrete conformal vector fields on a triangulated surface of curvature $k \in \{-1,1\}$ are $-2k$-eigenfunctions of our discrete Laplacians, exactly as in the smooth setting.
The discrete conformality can be understood here both in the sense of the vertex scaling and in the sense of circle patterns.
Finally, we connect the $-2k$-eigenfunctions to infinitesimal isometric deformations of a polyhedron inscribed into corresponding quadrics.
\end{abstract}

\maketitle

\section{Introduction}

Let $G=(V,E)$ be a graph equipped with vertex weights $d \colon V\to \mathbb{R} \setminus \{0\}$ and edge weights $c \colon E \to \mathbb{R}$.
Denote $d(i)$ by $d_i$ and $c(\{i,j\})$ by $c_{ij}=c_{ji}$.
The (normalized) \emph{discrete Laplacian} is a linear operator $\triangle \colon \mathbb{R}^{V}\to\mathbb{R}^{V}$ given by the formula
\begin{equation}
\label{eqn:DiscLapl}
(\triangle u)_i := \frac{1}{d_i} \sum_{j} c_{ij}(u_j-u_i) \quad \forall i \in V,
\end{equation}
where the sum ranges over all vertices adjacent to the vertex $i$ but can be extended to all $j \in V$ by setting $c_{ij} = 0$ for $ij \notin E$.
Vectors $u \in \mathbb{R}^V$ can be naturally viewed as functions $u:V \to \mathbb{R}$.
A function $u$ is called (discrete) \emph{harmonic} if $\triangle u = 0$.

The linear operator \eqref{eqn:DiscLapl} is self-adjoint with respect to the following inner product on $\mathbb{R}^V$:
\[
\langle u, v \rangle_d := \sum_i d_i u_i v_i.
\]
Observe that
\begin{equation}
\label{eqn:LaplWeak}
\langle \triangle u, v \rangle_d = \sum_i v_i \sum_j c_{ij}(u_j - u_i) =
-\sum_{\{i,j\}} c_{ij} (u_j - u_i)(v_j - v_i),
\end{equation}
so that the Laplacian ``in the weak sense'' is independent of the choice of the vertex weights.
The choice of vertex weights also has no influence on the space of harmonic functions, but it does affect the non-zero eigenvalues and the corresponding eigenspaces.
The above formula shows that, if the weights $c_{ij}$ are non-negative, then $\triangle$ is negative semidefinite.
If all edge weights are positive, then harmonic functions are those which are constant on each connected component of the graph.

Setting $c_{ij} = 1$ whenever $ij \in E$ and $d_i = 1$ one obtains the combinatorial Laplacian, which is related to combinatorial properties of the graph such as the number of spanning trees and the expansion.
Discrete Laplacians with arbitrary weights were studied in \cite{ColinDeVerdiere1998}. The basics of the theory including the definition of a discrete Hodge Laplacian on simplicial cochains were laid down in \cite{Eckmann1945}.

When the graph carries some geometric data, one can ask for a natural choice of weights reflecting those data.
If $(V,E)$ is the graph of a triangulated surface where each triangle is equipped with a Euclidean metric, then there is a well known assignment
\begin{align}\label{eq:eweight}
	c_{ij} := \frac{\cot \alpha^{k}_{ij} + \cot \alpha^{l}_{ij}}{2},
\end{align}
where $\alpha_{ij}^k$ and $\alpha_{ij}^l$ are the angles opposite to the edge $ij$, see Figure \ref{fig:TwoTriangles}, left.
This discrete Euclidean Laplacian, going now under the name ``the cotangent Laplacian'', was discovered independently in various contexts.
It appears in \cite{Duffin1959} related to electrical networks, in \cite{ColinDeVerdiere1998} as an example with a reference to \cite{Ciarlet1978}, and in \cite{Pinkall1993} that motivates the definition of discrete minimal surfaces.
Later on, relations were discovered to statistical mechanics \cite{Kenyon2002,Kenyon2018,Smirnov2010}, the variation of volume of hyperbolic polyhedra \cite{Bobenko2010}, deformations of circle patterns \cite{Lam2015a}, and isometric deformations of polyhedra \cite{Lam2015}.
The cotangent Laplacian is also widely used in geometry processing.

\begin{figure}
\begin{center}
\begin{picture}(0,0)%
\includegraphics{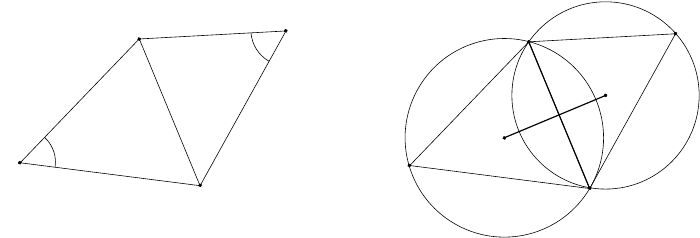}%
\end{picture}%
\setlength{\unitlength}{2486sp}%
\begingroup\makeatletter\ifx\SetFigFont\undefined%
\gdef\SetFigFont#1#2#3#4#5{%
  \reset@font\fontsize{#1}{#2pt}%
  \fontfamily{#3}\fontseries{#4}\fontshape{#5}%
  \selectfont}%
\fi\endgroup%
\begin{picture}(8887,3002)(3733,-1432)
\put(11227,-1012){\makebox(0,0)[lb]{\smash{{\SetFigFont{8}{9.6}{\rmdefault}{\mddefault}{\updefault}{\color[rgb]{0,0,0}$p_i$}%
}}}}
\put(9849,-374){\makebox(0,0)[lb]{\smash{{\SetFigFont{8}{9.6}{\rmdefault}{\mddefault}{\updefault}{\color[rgb]{0,0,0}$o_{ijk}$}%
}}}}
\put(10242,1169){\makebox(0,0)[lb]{\smash{{\SetFigFont{8}{9.6}{\rmdefault}{\mddefault}{\updefault}{\color[rgb]{0,0,0}$p_j$}%
}}}}
\put(6350,-880){\makebox(0,0)[lb]{\smash{{\SetFigFont{8}{9.6}{\rmdefault}{\mddefault}{\updefault}{\color[rgb]{0,0,0}$p_i$}%
}}}}
\put(5256,1193){\makebox(0,0)[lb]{\smash{{\SetFigFont{8}{9.6}{\rmdefault}{\mddefault}{\updefault}{\color[rgb]{0,0,0}$p_j$}%
}}}}
\put(3748,-677){\makebox(0,0)[lb]{\smash{{\SetFigFont{8}{9.6}{\rmdefault}{\mddefault}{\updefault}{\color[rgb]{0,0,0}$p_k$}%
}}}}
\put(4498,-354){\makebox(0,0)[lb]{\smash{{\SetFigFont{8}{9.6}{\rmdefault}{\mddefault}{\updefault}{\color[rgb]{0,0,0}$\alpha_{ij}^k$}%
}}}}
\put(6577,838){\makebox(0,0)[lb]{\smash{{\SetFigFont{8}{9.6}{\rmdefault}{\mddefault}{\updefault}{\color[rgb]{0,0,0}$\alpha_{ij}^l$}%
}}}}
\put(8608,-686){\makebox(0,0)[lb]{\smash{{\SetFigFont{8}{9.6}{\rmdefault}{\mddefault}{\updefault}{\color[rgb]{0,0,0}$p_k$}%
}}}}
\put(11498,323){\makebox(0,0)[lb]{\smash{{\SetFigFont{8}{9.6}{\rmdefault}{\mddefault}{\updefault}{\color[rgb]{0,0,0}$o_{ijl}$}%
}}}}
\put(12380,1230){\makebox(0,0)[lb]{\smash{{\SetFigFont{8}{9.6}{\rmdefault}{\mddefault}{\updefault}{\color[rgb]{0,0,0}$p_l$}%
}}}}
\put(7415,1269){\makebox(0,0)[lb]{\smash{{\SetFigFont{8}{9.6}{\rmdefault}{\mddefault}{\updefault}{\color[rgb]{0,0,0}$p_l$}%
}}}}
\end{picture}%
\end{center}
\caption{Two adjacent triangles and their circumcircles.}
\label{fig:TwoTriangles}
\end{figure}

If we assume that the triangle $p_ip_jp_k$ is positively oriented, then the edge weight \eqref{eq:eweight} satisfies the equation
\begin{equation}
\label{eqn:CotWeightGeom}
o_{ijk} - o_{ijl} = c_{ij}\, J(p_j - p_i),
\end{equation}
where $o_{ijk}$ and $o_{ijl}$ are the circumcenters of the triangles $p_ip_jp_k$ and $p_ip_jp_l$, and $J$ is rotation by $90^\circ$ in the positive direction, see Figure \ref{fig:TwoTriangles}.
This implies that the following three conditions are equivalent:
\begin{enumerate}
\item
The weight $c_{ij}$ is nonnegative.
\item
The so-called local Delaunay condition: vertices $p_k$ and $p_l$ do not lie inside the circumcircles of $p_ip_jp_l$ and $p_ip_jp_k$, respectively.
\item
$\alpha_{ij}^k + \alpha_{ij}^l \le \pi$
\end{enumerate}
If $\alpha_{ij}^k + \alpha_{ij}^l = \pi$, then the circumcircles of triangles $ijk$ and $ijl$ coincide, and the edge weight $c_{ij}$ vanishes.
If all $c_{ij}$ are non-negative, then formula \eqref{eqn:LaplWeak} implies that the symmetric bilinear form $(u,v) \mapsto \langle \triangle u, v \rangle_d$ is negative semidefinite.

The vertex weights of the cotangent Laplacian are sometimes set to $1$, and sometimes chosen to be equal to the areas of the dual faces.
Here the dual face corresponding to the vertex $p_i$ is the polygon with the vertices at the circumcenters of triangles adjacent to $p_i$.
If the triangulation is Delaunay, then the dual faces are the so called Voronoi cells.
For a recent survey on the cotangent Laplacian and a generalization thereof see \cite{Glickenstein2024}.

In this article we propose discrete Laplacians adapted to the spherical and the hyperbolic metrics.
\begin{definition}
\label{def:dissph}
Let $(V,E)$ be the graph of a triangulated surface where each triangle is equipped with a spherical metric.
The \emph{discrete spherical Laplacian} $\triangle_s \colon \mathbb{R}^V \to \mathbb{R}^V$ is defined as
\[
(\triangle_s u)_i =   \frac{1}{d_i} \sum_j c_{ij} (u_j - u_i)
\]
with the edge weights and the vertex weights given by
\[
c_{ij}:=\frac{\tan\left( \dfrac{\alpha_{jk}^i + \alpha_{ki}^j - \alpha_{ij}^k}{2}\right) + \tan\left( \dfrac{\alpha_{lj}^i + \alpha_{il}^j - \alpha_{ji}^l}{2}\right)}{2\cos^2 \dfrac{\lambda_{ij}}{2}},
\]
\[
d_i := \sum_j c_{ij} \sin^2 \frac{\lambda_{ij}}{2},
\]
where $\lambda_{ij}$ is the length of the edge $ij$, and $\alpha_{ij}^k$ is the angle  at the vertex $k$ in the spherical triangle $ijk$.

\end{definition}

\begin{definition}\label{def:dishyp}
Let $(V,E)$ be the graph of a triangulated surface where each triangle is equipped with a hyperbolic metric.
The \emph{discrete hyperbolic Laplacian} $\triangle_h \colon \mathbb{R}^V \to \mathbb{R}^V$ is defined as
\[
(\triangle_h u)_i =   \frac{1}{d_i} \sum_j c_{ij} (u_j - u_i)
\]
with the edge weights and the vertex weights given by
\[
c_{ij}:=\frac{\tan\left( \dfrac{\alpha_{jk}^i + \alpha_{ki}^j - \alpha_{ij}^k}{2}\right) + \tan\left( \dfrac{\alpha_{lj}^i + \alpha_{il}^j - \alpha_{ji}^l}{2}\right)}{2\cosh^2 \dfrac{\lambda_{ij}}{2}},
\] 
\[
d_i =  \sum_j c_{ij} \sinh^2 \frac{\lambda_{ij}}{2},
\]
where $\lambda_{ij}$ is the length of the edge $ij$, and $\alpha_{ij}^k$ is the angle at the vertex $k$ in the hyperbolic triangle $ijk$.
\end{definition}

The edge weights of the discrete hyperbolic Laplacian have appeared implicitly in \cite{Bobenko2010,Glickenstein2017,Leibon2002}.

We will show that both in the spherical and in the hyperbolic cases the edge weights $c_{ij}$ are non-negative if and only if the local Delaunay condition is satisfied: the circumcircles of triangles do not contain the opposite vertices of the adjacent triangles.
Besides, we will relate the vertex weights to the areas of dual faces, whose definition is different from the Euclidean case.
In the limit when spherical or hyperbolic triangles become small, the angle sum $\alpha_{ij}^k + \alpha_{jk}^i + \alpha_{ki}^j$ tends to $\pi$ and one recovers the Euclidean edge weights \eqref{eq:eweight}.
In the same limit, the non-Euclidean dual faces tend to the Euclidean ones, so the same continuity takes place for the vertex weights.

Our definition of discrete spherical and hyperbolic Laplacians is justified by discrete analogs of several theorems about the Laplace--Beltrami operators on surfaces of constant Gaussian curvature.
Let us first state these theorems.

\begin{theorem}
\label{thm:AreaSmooth}
Let $P \subset \mathbb{R}^3$ be a convex body with smooth boundary, and let $h \colon \mathbb{S}^2 \to \mathbb{R}$ be its support function.
Then the surface area of $P$ is equal to
\[
\frac12 \int_{\mathbb{S}^2} h (2h + \triangle_g h)\, d\vol_g.
\]
Here $\triangle_g$ and $d\vol_g$ are the Laplace--Beltrami operator and the area element of the unit sphere.
\end{theorem}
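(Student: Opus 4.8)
The plan is to reduce the statement to a classical representation of the surface area through the principal radii of curvature and then to an integration-by-parts identity on $\mathbb{S}^2$. First I would use the support parametrization of $\partial P$: writing $\nabla$, $\nabla^2$ and $\triangle_g$ for the gradient, covariant Hessian and Laplace--Beltrami operator on $\mathbb{S}^2$, the boundary point with outer normal $\nu$ is $x(\nu)=h(\nu)\,\nu+\nabla h(\nu)$, the inverse Gauss map. Its differential is the symmetric operator field $S:=\nabla^2 h+h\,\mathrm{Id}$ on the tangent spaces of $\mathbb{S}^2$, whose eigenvalues are the principal radii of curvature $r_1,r_2$ of $\partial P$. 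Since the Gauss map is a diffeomorphism onto $\mathbb{S}^2$, the area element of $\partial P$ pulls back with Jacobian $r_1r_2=\det S$, so that
\[
\operatorname{Area}(\partial P)=\int_{\mathbb{S}^2}\det\!\big(\nabla^2 h+h\,\mathrm{Id}\big)\,d\vol_g .
\]
This identity, together with the basic fact that $S$ is the derivative of the support parametrization, is standard convex geometry and I would quote it; it requires $h\in C^2$ (with $S>0$ for strict convexity), which is guaranteed by the smoothness hypothesis.

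Next I would expand the determinant of the $2\times 2$ symmetric operator $S$. Using $\operatorname{tr} S=\triangle_g h+2h$ and the elementary identity $\det S=\det(\nabla^2 h)+h\,\triangle_g h+h^2$, I get
\[
\operatorname{Area}(\partial P)=\int_{\mathbb{S}^2}\det(\nabla^2 h)\,d\vol_g+\int_{\mathbb{S}^2}h\,\triangle_g h\,d\vol_g+\int_{\mathbb{S}^2}h^2\,d\vol_g .
\]
On the closed manifold $\mathbb{S}^2$, Green's identity gives $\int h\,\triangle_g h=-\int|\nabla h|^2$, so the whole problem collapses onto the single lemma
\[
\int_{\mathbb{S}^2}\det(\nabla^2 h)\,d\vol_g=\frac12\int_{\mathbb{S}^2}|\nabla h|^2\,d\vol_g ,
\]
after which $\operatorname{Area}(\partial P)=-\tfrac12\int|\nabla h|^2+\int h^2=\tfrac12\int h(\triangle_g h+2h)$, which is exactly the claimed formula.

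The heart of the argument, and the step I expect to be the main obstacle, is this lemma, and I would prove it via the Bochner formula. Writing $\det(\nabla^2 h)=\tfrac12\big[(\triangle_g h)^2-|\nabla^2 h|^2\big]$ for the $2\times 2$ Hessian (where $|\nabla^2 h|^2=\operatorname{tr}\big((\nabla^2 h)^2\big)$), I apply the Weitzenböck--Bochner identity
\[
\tfrac12\,\triangle_g|\nabla h|^2=|\nabla^2 h|^2+\langle\nabla h,\nabla\triangle_g h\rangle+\operatorname{Ric}(\nabla h,\nabla h),
\]
and use $\operatorname{Ric}=g$ on the unit sphere so that the last term is $|\nabla h|^2$. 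Integrating over $\mathbb{S}^2$ kills the left-hand side, and $\int\langle\nabla h,\nabla\triangle_g h\rangle=-\int(\triangle_g h)^2$ yields $\int(\triangle_g h)^2-\int|\nabla^2 h|^2=\int|\nabla h|^2$, which is precisely $2\int\det(\nabla^2 h)=\int|\nabla h|^2$. The delicate points are correctly identifying $\det S$ with the Jacobian of the inverse Gauss map (the only genuinely geometric input) and keeping the sign conventions of $\triangle_g$ consistent throughout; once these are pinned down, the remaining computations are the routine integrations by parts indicated above.
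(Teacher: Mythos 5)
Your argument is correct, but it cannot be measured against ``the paper's own proof'' because the paper gives none: Theorem \ref{thm:AreaSmooth} is quoted as classical, due to Minkowski, with a pointer to Hilbert. The closest in-paper argument is the proof of the discrete counterpart, Theorem \ref{thm:AreaDiscrete}, and it runs along genuinely different lines: there one uses that the volume $V(h)$ and surface area $A(h)$ of $P(h)$ are homogeneous polynomials of degrees $3$ and $2$ in the support numbers, that $\partial V/\partial h_i = A_i$ with second derivatives expressed through edge lengths, and then identifies the coefficients $a_{ij}$ of $A$ with the weights $c_{ij}, d_i$ of $\triangle_s$ by differentiating $V$ twice and evaluating at $h=1$ --- in substance Minkowski's mixed-volume argument, which survives discretization verbatim. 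Your route is instead a direct differential-geometric computation: area as $\int_{\mathbb{S}^2}\det(\nabla^2 h + h\,\mathrm{Id})\,d\vol_g$ via the inverse Gauss map, the two-dimensional expansion $\det S = \det(\nabla^2 h) + h\,\triangle_g h + h^2$, and the Bochner identity with $\operatorname{Ric}=g$ to establish the key lemma $\int_{\mathbb{S}^2}\det(\nabla^2 h)\,d\vol_g = \frac12\int_{\mathbb{S}^2}|\nabla h|^2\,d\vol_g$. I checked the signs and integrations by parts; they are all consistent with the convention $\triangle_g = \operatorname{tr}\nabla^2$ used in the paper (under which restrictions of linear functions satisfy $\triangle_g h = -2h$, so translations contribute nothing to the quadratic form, as they must). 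What each approach buys: yours is self-contained on the smooth side and makes transparent why the Hessian determinant collapses to a first-order quantity (a null-Lagrangian-plus-curvature phenomenon), while the paper's polynomiality argument needs no curvature computation and is the one that carries over to the discrete setting without any regularity.

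One small caveat: your claim that $h \in C^2$ (with $S>0$) is ``guaranteed by the smoothness hypothesis'' is too quick. A convex body with $C^\infty$ boundary need not have a $C^2$, or even $C^1$, support function --- if $\partial P$ contains a flat piece, $h$ fails to be differentiable at the corresponding normal --- so your Gauss-map step really requires $\partial P$ to have positive Gauss curvature (a $C^2_+$ body), where the Gauss map is a diffeomorphism. The general case then follows by approximating $P$ by $C^2_+$ bodies and observing that both the surface area and the weak form $\int_{\mathbb{S}^2}\bigl(h^2 - \frac12|\nabla h|^2\bigr)\,d\vol_g$ of your right-hand side are continuous under this approximation. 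This is routine, but it should be said, since as written the regularity assertion is the only step of the proof that is not actually justified.
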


The support function of a convex body $P \subset \mathbb{R}^3$ is the map $h \colon \mathbb{S}^2 \to \mathbb{R}$ that associates to every $v \in \mathbb{S}^2$ the signed distance from $0$ to the tangent plane of $P$ at the point with the outward unit normal $v$.

The formula of Theorem \ref{thm:AreaSmooth} is due to Minkowski \cite{Minkowski1903}, see also \cite{Hilbert1910}. The next theorem follows from the curvature formula under change of conformal metrics.

\begin{theorem}
\label{thm:ConfFieldSmooth}
Let $(M, g)$ be a Riemannian surface of constant Gaussian curvature $k$, and let $\xi$ be a conformal vector field on $M$ with the conformal factor $f$:
\[
\Lie_\xi g = fg,
\]
where $\Lie$ is the Lie derivative.
Then the function $f$ is an eigenfunction of the Laplace--Beltrami operator of the metric $g$, namely one has
\[
\triangle_g f = -2k f.
\]
\end{theorem}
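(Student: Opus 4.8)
The plan is to use the flow of the conformal vector field $\xi$ together with the fact that Gaussian curvature is invariant under diffeomorphisms. Let $\phi_t$ denote the (local) flow of $\xi$, so that for every tensor the pullback evolves by the Lie derivative: $\frac{d}{dt}\phi_t^*g = \phi_t^*(\Lie_\xi g) = (f\circ\phi_t)\,\phi_t^*g$. At each fixed point this is a scalar linear ODE for the symmetric-form-valued $\phi_t^*g$, whose solution stays proportional to the initial value $g$; hence $\phi_t^*g = e^{2\rho_t}g$ for a family of functions $\rho_t$ with $\rho_0 = 0$ and $2\dot\rho_0 = f$. On the other hand, Gaussian curvature is natural under pullback, so $K_{\phi_t^*g} = K_g\circ\phi_t \equiv k$ for all $t$, because $K_g$ is the constant $k$.

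With these two facts in hand I would invoke the two-dimensional conformal change formula for the Gaussian curvature, $K_{e^{2\rho}g} = e^{-2\rho}\bigl(K_g - \triangle_g\rho\bigr)$, where $\triangle_g$ is the Laplace--Beltrami operator normalized to be negative semidefinite. Applying it to $\phi_t^*g = e^{2\rho_t}g$ and substituting $K_{\phi_t^*g} = k$ and $K_g = k$ gives the identity $k\,e^{2\rho_t} = k - \triangle_g\rho_t$ valid for every $t$. Differentiating at $t=0$, where $\rho_0 = 0$ and $\dot\rho_0 = f/2$, yields $2k\,\dot\rho_0 = -\triangle_g\dot\rho_0$, that is $\triangle_g f = -2k f$, which is exactly the claim.

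An equivalent route avoids integrating the flow: one linearizes the curvature functional $g\mapsto K_g$ directly in the conformal direction $\dot g = fg$. By naturality this first variation equals $\Lie_\xi K_g = \xi(K_g) = 0$, while linearizing the formula above (or invoking the standard first-variation formula for scalar curvature, simplified using $\operatorname{Ric} = k\,g$ in dimension two) gives $-\tfrac12\triangle_g f - k f$; setting this to zero reproduces $\triangle_g f = -2k f$. I do not expect a genuine conceptual obstacle, as the argument is classical and its only real content is the two-dimensional conformal curvature formula. The one point that demands care is the sign and normalization convention for $\triangle_g$: the convention used above is the one consistent with the rest of the paper and with the fact that, on the round sphere $(k=1)$, the value $-2k = -2$ is precisely the first nonzero eigenvalue $-l(l+1)$ at $l=1$.
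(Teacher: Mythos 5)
Your proof is correct and follows exactly the route the paper indicates: the paper gives no detailed argument, remarking only that the theorem ``follows from the curvature formula under change of conformal metrics,'' which is precisely the identity $K_{e^{2\rho}g}=e^{-2\rho}(K_g-\triangle_g\rho)$ you differentiate along the flow of $\xi$. Both your flow computation and your alternative direct linearization (using $\operatorname{Ric}=kg$ in dimension two) are sound, with the sign conventions consistent with the paper's negative-semidefinite Laplacian.
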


The eigenfunctions are also related to infinitesimal isometric deformations in space. An infinitesimal isometric deformation of a surface in $\mathbb{R}^3$ (or, more generally, of any smooth submanifold of a Riemannian manifold) is a vector field such that any deformation of the surface with the initial velocity equal to this field preserves the lengths of curves on the surface in the first order.

\begin{theorem}
\label{thm:SphIIDSmooth}
Let $\Omega$ be an open subset of $\mathbb{S}^2$, and let $q:\Omega \to \mathbb{R}^3$ be an infinitesimal isometric deformation of $\Omega$.
Decompose $q$ into the radial and the tangential components:
\begin{equation}
\label{eqn:qDecomp}
q = f \nu + \xi,
\end{equation}
where $\xi$ is a tangent vector field to $\Omega$, and $\nu$ is the outward unit normal field of $\mathbb{S}^2$.
Then the radial component is an eigenfunction of the spherical Laplace--Beltrami operator with the eigenvalue $-2$:
\[
\triangle_g f = -2f.
\]
Besides, the vector field $\xi$ is conformal, with the conformal factor equal to minus twice the radial component:
\begin{equation}
\label{eqn:xiConf}
\Lie_\xi g = -2f g.
\end{equation}
\end{theorem}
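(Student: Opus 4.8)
\emph{Plan.} The cleanest route is to first establish the conformality statement \eqref{eqn:xiConf} and then deduce the eigenvalue equation $\triangle_g f = -2f$ from it by invoking Theorem~\ref{thm:ConfFieldSmooth}. Indeed, once we know that $\xi$ is conformal with conformal factor $-2f$, applying Theorem~\ref{thm:ConfFieldSmooth} with $k=1$ gives $\triangle_g(-2f) = -2(-2f) = 4f$, which by linearity is precisely $\triangle_g f = -2f$. So the whole theorem reduces to computing $\Lie_\xi g$.

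To do this I would work in local coordinates $x^1, x^2$ on $\Omega$ and write $X \colon \Omega \to \mathbb{R}^3$ for the inclusion, so that $g_{ab} = \langle X_a, X_b\rangle$ with $X_a := \partial_a X$. The defining property of an infinitesimal isometric deformation is that the first-order variation of the induced metric under $X \mapsto X + tq$ vanishes; expanding $\langle X_a + t\,\partial_a q,\, X_b + t\,\partial_b q\rangle$ shows this is equivalent to
\[
\langle X_a, \partial_b q\rangle + \langle X_b, \partial_a q\rangle = 0 \qquad \text{for all } a,b.
\]

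The key step is to evaluate $\langle X_a, \partial_b q\rangle$ after substituting the decomposition $q = f\nu + \xi$. On the unit sphere the outward normal equals the position vector, $\nu = X$, and differentiating $\langle X_b, X\rangle = 0$ gives the Gauss formula $\partial_a X_b = \Gamma^c_{ab} X_c - g_{ab}\,\nu$. Differentiating $q = fX + \xi^c X_c$ and pairing with $X_a$, the terms along $\nu$ drop out (since $\langle X_a,\nu\rangle = 0$) and one is left with
\[
\langle X_a, \partial_b q\rangle = f\,g_{ab} + \nabla_b \xi_a,
\]
where $\nabla$ is the Levi-Civita connection of $g$ and $\xi_a = g_{ac}\xi^c$.

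Symmetrizing in $a,b$ and using the isometry condition then yields $\nabla_a\xi_b + \nabla_b\xi_a = -2f\,g_{ab}$. Since the left-hand side is exactly $(\Lie_\xi g)_{ab}$, this is the desired identity $\Lie_\xi g = -2fg$, and the eigenvalue equation follows as explained above. The only real work is the coordinate computation of $\partial_b q$; the point to get right there is the sign and normal-component bookkeeping in the Gauss formula, i.e.\ that the second fundamental form of the unit sphere with respect to the outward normal equals $-g$, which is what converts the radial part $f\nu$ of the deformation into the term $f\,g_{ab}$ that produces the conformal factor.
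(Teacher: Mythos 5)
Your proposal is correct, but note that the paper does not actually prove Theorem~\ref{thm:SphIIDSmooth} itself --- it defers to the reference \cite{IZ24} --- so there is no in-paper proof to match; what can be compared is the structure of the paper's proof of the discrete analogue, Theorem~\ref{thm:SphIIDDiscrete}. Your computation checks out: the isometry condition $\langle X_a,\partial_b q\rangle + \langle X_b,\partial_a q\rangle = 0$, the Gauss formula $\partial_a X_b = \Gamma^c_{ab}X_c - g_{ab}\nu$ with $\mathrm{II} = -g$ for the outward normal of the unit sphere, the resulting identity $\langle X_a,\partial_b q\rangle = f\,g_{ab} + \nabla_b\xi_a$, and the Killing-type symmetrization $(\Lie_\xi g)_{ab} = \nabla_a\xi_b + \nabla_b\xi_a = -2f\,g_{ab}$ are all right, and invoking Theorem~\ref{thm:ConfFieldSmooth} with conformal factor $-2f$ and $k=1$ is legitimate and non-circular, since that theorem is established independently (via the conformal change of curvature formula). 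Interestingly, your logical order is the reverse of the paper's in the discrete setting: there, the eigenvalue equation $\triangle_s f = -2f$ is proved first and directly, using the rotation field $r\colon F \to \mathbb{R}^3$ of the deformation together with the geometric identities \eqref{eqn:DualEdgeSpher} and \eqref{eqn:DualFaceSpher} for the dual polyhedron, and only afterwards is the conformality of the tangential component $\xi$ deduced from the pointwise identity $\langle q_j - q_i, p_j - p_i\rangle = 0$ --- a computation that is the exact discrete counterpart of your symmetrized identity. Your route buys economy (everything reduces to one Lie-derivative computation plus a quoted theorem), while the paper's discrete route is forced to be direct because no discrete analogue of Theorem~\ref{thm:ConfFieldSmooth} is available in advance; indeed, there the implication from conformal factors to eigenfunctions (Theorem~\ref{thm:ConfFieldDiscreteSph}) is itself \emph{derived from} the isometric-deformation statement rather than the other way around. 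One small point of hygiene: differentiating $\langle X_b, X\rangle = 0$ only identifies the normal component of $\partial_a X_b$; the tangential part $\Gamma^c_{ab}X_c$ is the definition of the induced connection, so you should phrase that step as fixing the normal component rather than as deriving the whole Gauss formula.
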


For a hyperbolic analog of the above theorem let us recall the hyperboloid model of the hyperbolic plane.
The Minkowski inner product
\begin{equation}
\label{eqn:MinkProd}
\langle x, y \rangle_{2,1} = -x_0y_0 + x_1y_1 + x_2y_2
\end{equation}
restricts on the tangent planes of the (upper half of the) one-sheeted hyperboloid
\begin{equation}
\label{eqn:HypMink}
\mathbb{H}^2 = \{(x_0, x_1, x_2) \in \mathbb{R}^3 \mid x_0^2 - x_1^2 - x_2^2 = 1, x_0 > 0\}
\end{equation}
to a field of positive definite symmetric bilinear forms, and thus defines a Riemannian metric $g$ on $\mathbb{H}^2$.
This metric has constant Gaussian curvature $-1$, therefore its Laplace--Beltrami operator $\triangle_g$ is the hyperbolic Laplacian.

\begin{theorem}
\label{thm:HypIIDSmooth}
Let $\Omega$ be an open subset of the upper half of the one-sheeted hyperboloid \eqref{eqn:HypMink}, and let $q \colon \Omega \to \mathbb{R}^3$ be an infinitesimal isometric deformation of $\Omega$.
Then the function
\[
f \colon \Omega \to \mathbb{R}, \quad f(p) = \langle p, q(p) \rangle
\]
is an eigenfunction of the hyperbolic Laplace--Beltrami operator with the eigenvalue~$2$:
\[
\triangle_g f = 2f,
\]
where $g$ is the restriction of the Minkowski inner product \eqref{eqn:MinkProd} to the tangent planes of $\Omega$.
Besides, the vector field
\[
\xi(p) = \overline{q}(p) + \langle p, q(p) \rangle p,
\]
where $\overline q = c \circ q$ with $c(x_0, x_1, x_2) = (-x_0, x_1, x_2)$, is a conformal tangent vector field on $\Omega$, with the conformal factor equal twice the function $f$:
\begin{equation}
\label{eqn:LieHypDiscr}
\Lie_\xi g = -2fg.
\end{equation}
\end{theorem}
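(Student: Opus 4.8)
\section*{Proof proposal}

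The plan is to mirror the proof of the spherical case (Theorem~\ref{thm:SphIIDSmooth}), transplanting it into Minkowski space $\mathbb{R}^{2,1}$ and accounting carefully for the Lorentzian signature. Throughout, $\langle\cdot,\cdot\rangle$ denotes the Minkowski product \eqref{eqn:MinkProd}, I write $\langle\cdot,\cdot\rangle_e$ for the standard Euclidean product, and $\partial_X$ for the ambient flat directional derivative. The first step is to record the two structural facts that drive everything. First, along $\Omega$ the position field $p$ is the Minkowski unit normal: $\langle p,p\rangle=-1$ and $\langle p,X\rangle=0$ for every tangent vector $X$; since $\partial_X p=X$, the hyperboloid is totally umbilic and the Gauss formula reads $\partial_X Y=\nabla_X Y+g(X,Y)\,p$ for tangent fields $X,Y$, where $\nabla$ is the Levi-Civita connection of $g$. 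Second, $q$ being an infinitesimal isometric deformation is equivalent to the vanishing of the first variation of the induced metric, i.e.
\[
\langle \partial_X q, Y\rangle + \langle \partial_Y q, X\rangle = 0 \qquad \text{for all tangent } X,Y .
\]

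Next I would identify the function $f$. Decomposing $q$ into its Minkowski-normal and tangential parts and using $\langle p,p\rangle=-1$, the normal part is $-f\,p$ with $f=\langle p,q\rangle$, so that the tangential component of $q$ is $q+fp$. The role of the involution $c$, which satisfies $\langle c(a),b\rangle=\langle a,b\rangle_e$ for all $a,b$ and is an involution, is to rewrite the relevant tangential field in the stated form; the identity $\langle \overline q,p\rangle=\langle q,p\rangle_e$ is the basic bookkeeping tool. Verifying that the displayed $\xi$ is genuinely tangent, $\langle \xi,p\rangle=0$, is the first place where the signature must be tracked with care.

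The heart of the proof is the computation of $\Lie_\xi g$. For a tangent field $\xi$ one has $(\Lie_\xi g)(X,Y)=\langle \partial_X\xi,Y\rangle+\langle \partial_Y\xi,X\rangle$, since the normal part of $\partial_X\xi$ is orthogonal to the tangent vector $Y$. Differentiating $\xi$, using $\partial_X p=X$ together with $\langle p,Y\rangle=0$ to kill the terms carrying the factor $p$, and then simplifying the symmetric combination that appears, the result collapses to a multiple of $g(X,Y)$. The subtlety is that, because of the $\overline q$ term, differentiating produces \emph{Euclidean} pairings $\langle\partial_X q,Y\rangle_e$, whereas the isometric condition is a \emph{Minkowski} statement; converting between the two via $c$ and the umbilicity relation $\partial_X p=X$ is exactly what must be done to see that the leftover is $-2f\,g(X,Y)$. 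This is formula \eqref{eqn:LieHypDiscr} and establishes that $\xi$ is conformal with factor $-2f$.

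Finally, the eigenvalue equation is immediate from the conformality and Theorem~\ref{thm:ConfFieldSmooth}: with $k=-1$ and conformal factor $-2f$ one gets $\triangle_g(-2f)=-2k(-2f)=-4f$, hence $\triangle_g f=2f$. (As a consistency check, $\triangle_g f=2f$ can also be verified directly by computing $\nabla^2 f$ from $f=\langle p,q\rangle$ via the Gauss formula and the isometric condition; note that this conclusion is insensitive to the sign of the conformal factor, since a factor $+2f$ would equally yield $\triangle_g f=2f$.) The main obstacle throughout is the signature bookkeeping: because the normal $p$ is timelike, several signs flip relative to the spherical computation, and the involution $c$ must be inserted in exactly the right places so that the Minkowski and Euclidean inner products are interchanged correctly and the conformal factor emerges with the sign $-2f$.
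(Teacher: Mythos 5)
The paper never proves this theorem itself --- it is quoted from \cite{IZ24} --- so the only internal benchmark is the mechanism the paper uses for the discrete analogue, and against that benchmark your proposal has a genuine hypothesis-level error. You take the isometric condition in the Minkowski sense ($\langle \partial_X q, Y\rangle_{2,1} + \langle \partial_Y q, X\rangle_{2,1} = 0$) and decompose $q$ itself into Minkowski-normal and tangential parts. But the theorem's $q$ is an infinitesimal isometric deformation of $\Omega$ as a surface in \emph{Euclidean} $\mathbb{R}^3$; that is exactly what the appearance of $\overline{q} = c \circ q$ in the statement encodes, and what the discrete counterpart makes explicit: in Theorem \ref{thm:HypIIDDiscrete} the surface $P$ is Euclidean, and the proof runs through Lemma \ref{lem:PogMap} (the infinitesimal Pogorelov map), by which $q$ is Euclidean-isometric iff $\overline{q}$ is Minkowski-isometric; one then decomposes $\overline{q} = -fp + \xi$ with $f = \langle p, \overline{q}\rangle_{2,1} = \langle p, q\rangle_e$ (Euclidean pairing). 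Note that with your Minkowski $f$ the field $\xi = \overline{q} + fp$ is not even tangent, since $\langle \xi, p\rangle_{2,1} = \langle p,q\rangle_e - \langle p,q\rangle_{2,1}$. Worse, under your Minkowski hypothesis the conclusion is false: take the boost Killing field $q(x) = (x_1, x_0, 0)$, which is Minkowski-isometric; at $p = (\cosh t, \sinh t, 0)$ with orthonormal tangent basis $U = (\sinh t, \cosh t, 0)$, $W = (0,0,1)$ one computes $(\Lie_\xi g)(U,U) = 8\sinh t \cosh t$ but $(\Lie_\xi g)(W,W) = 4 \sinh t \cosh t$, so $\xi$ is not conformal. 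Thus the ``conversion via $c$'' you invoke is not cosmetic bookkeeping: it is the hypothesis itself that must be transported by $c$, and your sketch has the two inner products in swapped roles --- differentiating $\xi = \overline{q} + fp$ produces Minkowski pairings $\langle \partial_X \overline{q}, Y\rangle_{2,1} = \langle \partial_X q, Y\rangle_e$, which vanish after symmetrization precisely because the hypothesis is the Euclidean one.

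There is also a sign issue. Set up correctly, the computation gives $(\Lie_\xi g)(X,Y) = \bigl[\langle \partial_X q, Y\rangle_e + \langle \partial_Y q, X\rangle_e\bigr] + 2f\, g(X,Y) = 2f\, g(X,Y)$, i.e.\ $\Lie_\xi g = +2fg$, consistent with the theorem's own text (``conformal factor equal twice the function $f$'') and with the conformal factor $+f$ in part 3 of Theorem \ref{thm:HypIIDDiscrete}; the minus sign in the display \eqref{eqn:LieHypDiscr} is evidently a typo in the paper, which your sketch uncritically reproduces by asserting that the computation ``collapses to $-2fg$'' --- no correct bookkeeping yields that sign. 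Your closing observation that $\triangle_g f = 2f$ is insensitive to this sign (via Theorem \ref{thm:ConfFieldSmooth} with $k = -1$) is correct and does rescue the eigenvalue claim; but as a proof of the conformality statement the proposal both starts from the wrong hypothesis and lands on the wrong (if faithfully quoted) sign, so the middle of the argument would not survive being written out in full.
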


Theorems \ref{thm:SphIIDSmooth} and \ref{thm:HypIIDSmooth} are proved in \cite{IZ24}. Below are the discrete counterparts of the above theorems that we shall focus on. First, we have a discrete analogue of Theorem \ref{thm:AreaSmooth}.

\begin{theorem}
\label{thm:AreaDiscrete}
Let $P \subset \mathbb{R}^3$ be a convex polyhedron with outward unit normals $p_1, \ldots, p_n$ and the corresponding support numbers $h_1, \ldots, h_n$, that is
\[
P = \{x \in \mathbb{R}^3 \mid \langle x, p_i \rangle \le h_i \text{ for all }i\},
\]
so that none of the linear inequalities is redundant.
Then the surface area of $P$ is equal to
\[
\frac12 \langle h, 2h + \triangle_s h \rangle_d.
\]
Here $\langle u, v \rangle_d = \sum_i d_i u_i v_i$, and $\triangle_s$ is the discrete spherical Laplacian for the triangulation of $\mathbb{S}^2$ with the vertices $p_1, \ldots, p_n$ and the edges dual to the edges of $P$.
(If the dual subdivision is not a triangulation, subdivide its faces by any diagonals.)
\end{theorem}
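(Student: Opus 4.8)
The plan is to treat both sides as quadratic forms in the vector of support numbers $h=(h_1,\dots,h_n)$ and match them entry by entry. Writing $\mathrm{Area}(\partial P)=\tfrac12 h^\top M h$ and $\tfrac12\langle h, 2h+\triangle_s h\rangle_d=\tfrac12 h^\top N h$ with symmetric matrices $M,N$ depending only on the normals $p_i$ and the combinatorics, I first record via \eqref{eqn:LaplWeak} and the definition of $d_i$ that $N_{ij}=c_{ij}$ for adjacent $i,j$, that $N_{ij}=0$ otherwise, and that $N_{ii}=2d_i-\sum_j c_{ij}$. It then suffices to prove $M_{ij}=N_{ij}$ for all $i,j$; I may assume the origin lies in the interior of $P$, so all $h_i>0$.

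Next I compute the area directly. With the origin interior, let $o_i=h_ip_i$ be the foot of the perpendicular from $0$ to the plane of the face $F_i$, and decompose $F_i$ into triangles with apex $o_i$ over its boundary edges. A short computation in the plane $\mathrm{span}(p_i,p_j)$ shows the signed distance from $o_i$ to the edge $F_i\cap F_j$ equals $(h_j-h_i\cos\lambda_{ij})/\sin\lambda_{ij}$; summing the two faces meeting along each edge and using $(1-\cos\lambda)/\sin\lambda=\tan\tfrac\lambda2$ gives
\[
\mathrm{Area}(\partial P)=\frac12\sum_{\{i,j\}}\ell_{ij}\,(h_i+h_j)\tan\frac{\lambda_{ij}}{2},
\]
where $\ell_{ij}$ is the length of the edge $F_i\cap F_j$ of $P$. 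To make this an explicit quadratic form I express $\ell_{ij}$ through the $h$'s: the vertex $v$ of $P$ dual to a triangle $ijk$ is $v=\big(h_i\,p_j{\times}p_k+h_j\,p_k{\times}p_i+h_k\,p_i{\times}p_j\big)/\det(p_i,p_j,p_k)$, and projecting $v$ onto the edge direction $p_i{\times}p_j$ and applying the spherical laws of sines and cosines yields the coefficients of $h_i,h_j$ and of the opposite vertex $h_k$ in $\ell_{ij}$ in terms of the angles of $ijk$ (and likewise for $ijl$). Thus $\ell_{ij}$ is linear in $h$ and $M$ is determined.

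The off-diagonal matching then splits by triangles. Since each edge contributes to the area a term proportional to $\ell_{ab}(h_a+h_b)$ with $\ell_{ab}$ linear in $h_a,h_b$ and the two opposite support numbers, a product $h_ih_j$ can arise only for adjacent $i,j$; hence $M_{ij}=0=N_{ij}$ for nonadjacent $i,j$. For adjacent $i,j$ the coefficient of $h_ih_j$ collects contributions from the edge $(ij)$ and from the four edges $(ik),(il),(jk),(jl)$ of the triangles $ijk,ijl$. Grouping the pieces belonging to a single triangle $ijk$ and using $\tan\tfrac{\lambda_{ij}}2/\sin\lambda_{ij}=1/(2\cos^2\tfrac{\lambda_{ij}}2)$, that triangle's contribution to $M_{ij}$ becomes $\tfrac{1}{2\cos^2(\lambda_{ij}/2)}$ times
\[
-\frac{\cot\alpha^i_{jk}+\cot\alpha^j_{ki}}{2}+\frac{\cot\frac{\lambda_{ij}}2\tan\frac{\lambda_{ik}}2}{2\sin\alpha^i_{jk}}+\frac{\cot\frac{\lambda_{ij}}2\tan\frac{\lambda_{jk}}2}{2\sin\alpha^j_{ki}}.
\]
The crux is the spherical identity that this bracket equals $\tan\tfrac{\alpha^i_{jk}+\alpha^j_{ki}-\alpha^k_{ij}}{2}$; granting it, triangle $ijk$ supplies exactly the first summand of $c_{ij}$, triangle $ijl$ the second, and $M_{ij}=c_{ij}$. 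I expect proving this identity from the half-angle and half-side formulas of spherical trigonometry to be the main technical obstacle, together with tracking the orientation signs of the opposite-vertex terms (a sanity check on the equilateral triangle already confirms the bracket and the positive signs).

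Finally, the diagonal is free. Translating $P$ by $t\in\mathbb R^3$ replaces $h_i$ by $h_i+\langle t,p_i\rangle$ and leaves the area unchanged, so the quadratic form $\tfrac12 h^\top M h$ annihilates every vector $\phi_t:=(\langle t,p_i\rangle)_i$, giving $M\phi_t=0$ for all $t$. The matrix $N$ also kills $\phi_t$: the component along $p_i$ is automatic from $d_i=\sum_j c_{ij}\sin^2\tfrac{\lambda_{ij}}2$, and the tangential component reduces to $\sum_j c_{ij}(p_j-\cos\lambda_{ij}\,p_i)=0$, the coordinate-function case of the discrete $-2$-eigenfunction property, i.e.\ the discrete counterpart of Theorem \ref{thm:SphIIDSmooth}. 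Since $M$ and $N$ already agree off the diagonal, $M-N$ is diagonal and annihilates all $\phi_t$, forcing $M=N$ and finishing the proof. (If a dual face is not a triangle, any subdividing diagonal corresponds to an edge of $P$ of length $\ell=0$, which contributes nothing and makes the result independent of the choice of diagonals.)
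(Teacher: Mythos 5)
Your skeleton is correct where you carry it out: the expansion giving $N_{ij}=c_{ij}$ and $N_{ii}=2d_i-\sum_j c_{ij}$, the area formula $\mathrm{Area}(\partial P)=\frac12\sum_{\{i,j\}}\ell_{ij}(h_i+h_j)\tan\frac{\lambda_{ij}}2$, the vanishing of $M_{ij}$ for non-adjacent pairs, and the translation-invariance argument forcing the diagonal entries to agree are all sound. But there is a genuine gap, and it sits exactly where you flag it: the off-diagonal matching $M_{ij}=c_{ij}$ --- which is essentially the whole theorem --- rests on (a) the explicit linear coefficients of $\ell_{ij}$ in $h_i,h_j,h_k,h_l$, which you only sketch via the dual-vertex formula, and (b) the spherical identity
\[
-\frac{\cot\alpha^i_{jk}+\cot\alpha^j_{ki}}{2}+\frac{\cot\frac{\lambda_{ij}}2\tan\frac{\lambda_{ik}}2}{2\sin\alpha^i_{jk}}+\frac{\cot\frac{\lambda_{ij}}2\tan\frac{\lambda_{jk}}2}{2\sin\alpha^j_{ki}}=\tan\frac{\alpha^i_{jk}+\alpha^j_{ki}-\alpha^k_{ij}}{2},
\]
which you explicitly ``grant''. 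This identity is not a side lemma; it \emph{is} the trigonometric content of the theorem (it encodes the same geometry as the computation $n_{ijk}-n_{ij}=\tan\frac{\lambda_{ij}}2\tan\frac{\alpha_{ik}^j+\alpha_{jk}^i-\alpha_{ij}^k}2\,e_{ij}$ inside the proof of Lemma \ref{lem:WeightsGeom}), and your equilateral sanity check is too symmetric to detect sign errors in the opposite-vertex terms. For what it is worth, the identity does check numerically on generic right triangles, so your route is viable --- but as submitted the proof is conditional on its hardest step.

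The paper closes exactly this gap by a different device that avoids all trigonometry: it treats $V(h)$ as a cubic polynomial on the type cone, uses the classical derivative formulas \eqref{eqn:DA} to get $a_{ij}=\frac12\,\partial^2V/\partial h_i\partial h_j\big|_{h=1}=L_{ij}(1)/(2\sin\lambda_{ij})$, and then invokes Lemma \ref{lem:WeightsGeom}, equation \eqref{eqn:DualEdgeSpher}, which says the circumscribed polyhedron $P(1)$ has edge lengths $L_{ij}(1)=c_{ij}\sin\lambda_{ij}$ --- the trigonometry is prepackaged there (with the subtlety, which the paper notes, that $h=1$ may lie outside the type cone, so one substitutes into the polynomial rather than into the geometry). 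Grafting this into your setup would replace (a) and (b) wholesale. Two smaller points: your diagonal argument via $\phi_t=(\langle t,p_i\rangle)_i$ is a nice alternative to the paper's direct computation of $a_{ii}$ from $\partial A_i/\partial h_i$; but the fact $N\phi_t=0$ that it needs, i.e.\ $\sum_j c_{ij}(p_j-\cos\lambda_{ij}\,p_i)=0$, is exactly equation \eqref{eqn:DualFaceSpher} of Lemma \ref{lem:WeightsGeom} and should be cited as such --- appealing to ``the discrete counterpart of Theorem \ref{thm:SphIIDSmooth}'' is backwards, since in the paper that counterpart (Theorem \ref{thm:SphIIDDiscrete}) is itself proved from the same lemma, and in a blind proof you would otherwise owe an independent argument for it. Your closing remark on zero-length edges for subdividing diagonals is correct and matches the paper's type-cone boundary discussion.
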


We chose not to deal with hyperbolic analogs of Theorems \ref{thm:AreaSmooth} and \ref{thm:AreaDiscrete}, but they should hold with the sphere replaced by the hyperboloid in the Minkowski space.
The appropriate tools are provided by \cite{Fillastre2013}.

There are two different discrete conformality theories.
Both of them tell when two triangulated surfaces with the same combinatorics $(V, E, F)$ but different piecewise spherical, respectively hyperbolic, metrics are conformal.

One, proposed in the Euclidean case in \cite{Luo2004} and extended to the spherical and hyperbolic case in \cite{Bobenko2010,Gu2018}, defines discrete conformality by vertex scaling in terms of edge lengths $\lambda:E\to (0, +\infty)$. In the spherical case, two edge length functions $\lambda,\tilde{\lambda}$ are discrete conformal if there exists a function $u \colon V \to \mathbb{R}$ such that for all $ij \in E$
\[
\sin\frac{\widetilde{\lambda}_{ij}}2 = e^{\frac{u_i + u_j}2} \sin\frac{\lambda_{ij}}2
\]
while respectively in the hyperbolic case
\[
 \sinh\frac{\widetilde{\lambda}_{ij}}2 = e^{\frac{u_i + u_j}2} \sinh\frac{\lambda_{ij}}2.
\]
The analogy with the smooth case is clear: the pointwise scaling of a Riemannian metric is replaced by scaling of edge lengths with scaling factors assigned to vertices.
Observe that a discrete conformal deformation in this sense can change the total angles around the vertices.
This leads to the question what values of curvatures at the vertices can be realized within a given conformal class.
For an overview and a recent result in the spherical case see \cite{IzmestievProsanovWu2024}.

The other theory discretizes the angle preserving property of a conformal metric change.
For every edge $ij \in E$ consider the intersection angles of the neighboring circumcircles of the triangles adjacent to $ij$.
If they are the same for $\lambda$ and $\widetilde{\lambda}$, then these two metrics are declared conformally equivalent.
Note that according to this definition the conformality preserves the total angles around the vertices.
This definition generalizes the circle packing approach \cite{Stephenson2005}.

Although substantially different in general, the vertex scaling and intersection angle conformality notions are in bijection under infinitesimal deformations: the corresponding vector fields are related by a $\pi/2$ rotation.
Below is our discrete analog of Theorem \ref{thm:ConfFieldSmooth}.

\begin{theorem}
\label{thm:ConfFieldDiscreteSph}
Let $(V, E)$ be a geodesic triangulation of a simply-connected spherical surface $M$.
For a function $u \colon V \to \mathbb{R}$ the following two conditions are equivalent.
\begin{enumerate}
\item
$\triangle_s u = -2u$
\item
There is a tangent vector field $\xi \colon V \to TM$ such that when the vertices are moved with initial velocities $\xi$ the derivatives of the edge lengths $\lambda_{ij}$ satisfy
\[
\left( \log \sin \frac{\lambda_{ij}}2 \right)^{\boldsymbol{\cdot}} = \frac{u_i + u_j}2.
\]
\end{enumerate}
Besides, there is a natural bijection between such vector fields $\xi$ and tangent vector fields that infinitesimally preserve the intersection angles of adjacent circumcircles. The two are related by a rotation of angle $\frac{\pi}{2}$ in the tangent plane at each vertex.
\end{theorem}

\begin{theorem}
\label{thm:ConfFieldDiscreteHyp}
Let $(V, E)$ be a geodesic triangulation of a simply-connected hyperbolic surface $M$.
For a function $u \colon V \to \mathbb{R}$ the following two conditions are equivalent.
\begin{enumerate}
\item
$\triangle_h u = 2u$
\item
There is a tangent vector field $\xi \colon V \to TM$ such that when the vertices are moved with initial velocities $\xi$ the derivatives of the edge lengths $\lambda_{ij}$ satisfy
\[
\left( \log \sinh \frac{\lambda_{ij}}2 \right)^{\boldsymbol{\cdot}} = \frac{u_i + u_j}2.
\]
\end{enumerate}
Besides, there is a natural bijection between such vector fields $\xi$ and tangent vector fields that infinitesimally preserve the intersection angles of adjacent circumcircles. The two are related by a rotation of angle $\frac{\pi}{2}$ in the tangent plane at each vertex.
\end{theorem}

The corresponding statement for the Euclidean case (where $u$ must be a discrete harmonic function) is already known \cite{Glickenstein2016,Lam2015a}. Below is a discrete analog of 
Theorems \ref{thm:SphIIDSmooth}. 

\begin{theorem}
\label{thm:SphIIDDiscrete}
Let $P$ be a triangulated Euclidean polyhedral surface with vertices indexed by the set $V = \{i, j, \ldots\}$, all vertices $p_i$ lying on the unit sphere and such that the projection of $P$ to the sphere from its center is injective.
Denote by $P_s$ the triangulated spherical surface obtained by this projection.
Let $q \colon V \to \mathbb{R}^3$ be an infinitesimal isometric deformation of $P$.
Decompose $q$ into the radial and tangential components:
\[
q_i = f_i p_i + \xi_i, \quad f \colon V \to \mathbb{R}, \quad \xi \colon V \to \mathbb{R}^3, \quad \langle \xi_i, p_i \rangle = 0.
\]
Then the following holds:
\begin{enumerate}
\item
The radial component of $q$ is an eigenfunction of the discrete spherical Laplacian on $P_s$ with the eigenvalue $-2$:
\[
\triangle_s f = -2f.
\]
\item
Conversely, if $P$ is simply connected, then for every $-2$-eigenfunction $f$ of the discrete spherical Laplacian on $P_s$ there is an infinitesimal isometric deformation of $P$ with the radial component $f$.
\item
The vector field $\xi$ is an infinitesimal conformal deformation of $P_s$ with the conformal factor $-f$.
\end{enumerate}
\end{theorem}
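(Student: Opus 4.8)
The plan is to reduce the entire statement to Theorem~\ref{thm:ConfFieldDiscreteSph} through an elementary dictionary between the extrinsic chordal geometry of $P$ and the intrinsic spherical geometry of $P_s$. Since $|p_i| = 1$ at every vertex, the Euclidean edge length of $P$ and the spherical length $\lambda_{ij}$ of the corresponding arc of $P_s$ satisfy
\[
|p_i - p_j|^2 = 2 - 2\langle p_i, p_j\rangle = 2 - 2\cos\lambda_{ij} = 4\sin^2\frac{\lambda_{ij}}2 .
\]
An infinitesimal isometric deformation of $P$ is a field $q$ with $\langle p_i - p_j,\, q_i - q_j\rangle = 0$ on every edge. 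Substituting $q_i = f_i p_i + \xi_i$ and using $\langle \xi_i, p_i\rangle = 0$ and $\langle p_i, p_j\rangle = \cos\lambda_{ij}$, the terms involving $f$ and those involving $\xi$ separate, and the edge condition becomes the single scalar identity
\begin{equation}
\label{eqn:isomedge}
\langle \xi_i, p_j\rangle + \langle p_i, \xi_j\rangle = 2(f_i + f_j)\sin^2\frac{\lambda_{ij}}2 .
\end{equation}
This is the only step where the isometry hypothesis is used.

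I would prove (3) first and deduce (1) from it. As $\langle \xi_i, p_i\rangle = 0$, the field $\xi$ is tangent to the sphere, hence an admissible deformation of $P_s$: moving $p_i$ with velocity $\xi_i$ keeps it on $\mathbb{S}^2$ to first order. Differentiating $\cos\lambda_{ij} = \langle p_i, p_j\rangle$ gives $-\sin\lambda_{ij}\,\dot\lambda_{ij} = \langle \xi_i, p_j\rangle + \langle p_i, \xi_j\rangle$; combining with \eqref{eqn:isomedge} and $\sin\lambda_{ij} = 2\sin\frac{\lambda_{ij}}2\cos\frac{\lambda_{ij}}2$ and dividing yields
\[
\left( \log \sin\frac{\lambda_{ij}}2 \right)^{\boldsymbol{\cdot}} = -\frac{f_i + f_j}2 .
\]
This is exactly condition (2) of Theorem~\ref{thm:ConfFieldDiscreteSph} for $u = -f$, i.e.\ $\xi$ is an infinitesimal conformal deformation of $P_s$ with conformal factor $-f$, proving (3). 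Statement (1) then follows from the implication (2)$\Rightarrow$(1) of that theorem applied to $u = -f$: one gets $\triangle_s(-f) = -2(-f)$, hence $\triangle_s f = -2f$.

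For the converse (2) I would run the computation backwards. Assuming $\triangle_s f = -2f$ with $P$, and hence $P_s$, simply connected, the implication (1)$\Rightarrow$(2) of Theorem~\ref{thm:ConfFieldDiscreteSph} applied to $u = -f$ produces a tangent field $\xi\colon V \to TM$ realizing $(\log\sin\frac{\lambda_{ij}}2)^{\boldsymbol{\cdot}} = -\frac{f_i + f_j}2$. Reversing the differentiation recovers \eqref{eqn:isomedge}, and setting $q_i := f_i p_i + \xi_i$ gives a field on the vertices of $P$ with radial part $\langle q_i, p_i\rangle = f_i$ that, by \eqref{eqn:isomedge}, satisfies $\langle p_i - p_j, q_i - q_j\rangle = 0$ on every edge. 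Thus $q$ is an infinitesimal isometric deformation of $P$ with radial component $f$.

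The genuinely hard analytic content is imported from Theorem~\ref{thm:ConfFieldDiscreteSph}; the work specific here is the dictionary \eqref{eqn:isomedge} and the matching of signs and normalizations. The point requiring care is the asymmetry of the hypotheses: part (1) is asserted without assuming $P$ simply connected, so I would verify that the implication (2)$\Rightarrow$(1) invoked there is purely local — around each vertex it reduces to the vanishing first variation of the cone angle (which stays $2\pi$ as the points move within $\mathbb{S}^2$) together with the first variation of the spherical cosine rule — whereas the global integration of the per-edge data into a single-valued field $\xi$ in part (2) is precisely where simple connectivity cannot be dispensed with. I expect this bookkeeping, rather than any computation, to be the main obstacle.
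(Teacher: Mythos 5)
Your dictionary is correct --- the displayed edge identity is exactly the paper's Lemma \ref{lem:DiscConf} in disguise, and your proof of part 3 coincides with the paper's --- but the reduction of parts 1 and 2 to Theorem \ref{thm:ConfFieldDiscreteSph} is circular within this paper. The equivalence (1)$\Leftrightarrow$(2) of Theorem \ref{thm:ConfFieldDiscreteSph} is not proved independently anywhere: the paper's proof of it consists precisely of the dictionary you rederived ($q_i = -u_i p_i + \xi_i$ is an infinitesimal isometric deformation of the chordal polyhedron if and only if $\xi$ is discretely conformal with factors $u$), followed by citations of parts 1 and 2 of Theorem \ref{thm:SphIIDDiscrete}. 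The dependency arrow points the opposite way from what your plan needs, so the ``genuinely hard analytic content'' you propose to import is exactly the content you are being asked to prove, and nothing in your proposal supplies it.

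Concretely, what is missing is the following. For part 1: the rotation/translation fields of Lemma \ref{lem:RotTranslProp} --- every infinitesimal isometric deformation of a triangulated surface determines $r \colon F \to \mathbb{R}^3$ with $r_{ijk} - r_{ijl} = s_{ij}(p_j - p_i)$ --- combined with the geometric identities of Lemma \ref{lem:WeightsGeom}, namely $n_{ijk} - n_{ijl} = c_{ij}\, p_i \times p_j$ and $\sum_j c_{ij}(p_j - p_i) = -2 d_i p_i$, which make the sum $\sum_j c_{ij} \langle p_j, q_j - q_i \rangle$ vanish and produce the eigenvalue $-2$. For part 2: the deformation \eqref{eqn:DeformDualPolyh} of the circumscribed dual polyhedron together with the face-area derivative $A_i'(0) = d_i \bigl( 2f_i + (\triangle_s f)_i \bigr)$ of Lemma \ref{lem:ADer}, which converts the eigenfunction equation into a self-stress that is then integrated to the fields $r$ and $t$ --- the only place where simple connectivity is used. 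Your closing remark, that the implication (2)$\Rightarrow$(1) of Theorem \ref{thm:ConfFieldDiscreteSph} ought to be ``purely local'' and reducible to first variations of cone angles and the spherical cosine rule, is a hope rather than an argument: no derivation is sketched, and any such derivation would have to reproduce the specific weights $c_{ij}$ and $d_i$, whose geometric meaning (dual edges and dual face areas) is where all the work in the paper actually lies. The hypothesis asymmetry you correctly noticed is resolved by the paper in the opposite direction: part 1 holds for arbitrary $P$ because the rotation field exists without any global assumption (single triangles are infinitesimally rigid), while simple connectivity enters only through the integration step in the converse direction.
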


The hyperbolic analog of Theorem \ref{thm:SphIIDDiscrete} holds with Euclidean space replaced by Minkowski space, which is stated in Theorem \ref{thm:HypIIDDiscretePrime}. We shall show that this leads to a discrete analog of 
Theorem \ref{thm:HypIIDSmooth}.

\begin{theorem}
\label{thm:HypIIDDiscrete}
Let $P$ be a triangulated Euclidean polyhedral surface with vertices indexed by the set $V = \{i, j, \ldots\}$, all vertices $p_i$ lying on the upper half of the one-sheeted hyperboloid \eqref{eqn:HypMink}
and such that the projection of $P$ to $\mathbb{H}^2$ from the origin is injective.
Denote by $P_h$ the triangulated hyperbolic surface obtained by this projection.
Let $q \colon V \to \mathbb{R}^3$ be an infinitesimal isometric deformation of $P$.
Consider the function
\[
f \colon V \to \mathbb{R}, \quad f_i = \langle p_i, q_i \rangle.
\]
Then the following holds:
\begin{enumerate}
\item
The function $f$ is an eigenfunction of the discrete hyperbolic Laplacian on $P_h$ with the eigenvalue $2$:
\[
\triangle_h f = 2f.
\]
\item
Conversely, if $P$ is simply connected, then for every $2$-eigenfunction $f$ of the discrete hyperbolic Laplacian on $P_h$ there is an infinitesimal isometric deformation of $P$ with the radial component $f$.
\item
The vector field
\[
\xi_i = \overline{q}_i + \langle p_i, q_i \rangle p_i,
\]
where $\overline{q} = c \circ q$, $c(x_0, x_1, x_2) = (-x_0, x_1, x_2)$, is an infinitesimal conformal deformation of $P_h$ with the conformal factor $f$.
\end{enumerate}
\end{theorem}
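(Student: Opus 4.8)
The plan is to derive Theorem \ref{thm:HypIIDDiscrete} from its Minkowski counterpart, Theorem \ref{thm:HypIIDDiscretePrime}, by means of the linear involution $c(x_0,x_1,x_2) = (-x_0,x_1,x_2)$. Writing $\langle\cdot,\cdot\rangle$ for the Euclidean inner product and $\langle\cdot,\cdot\rangle_{2,1}$ for the Minkowski one, the single identity that drives everything is
\[
\langle c(a), b\rangle_{2,1} = \langle a, b\rangle \qquad\text{for all } a,b\in\mathbb{R}^3,
\]
which is immediate from \eqref{eqn:MinkProd}. In particular $c$ turns Euclidean orthogonality into Minkowski orthogonality and conversely; being an involution, it sets up a bijection between the two worlds.

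First I would establish the bridge at the level of deformations. The points $p_i$ span one and the same subcomplex of $\mathbb{R}^3$ whether we regard it as the Euclidean polyhedral surface $P$ or as a spacelike polyhedral surface in Minkowski space; in either case the central projection to $\mathbb{H}^2$ yields the very same $P_h$, and hence the same discrete hyperbolic Laplacian $\triangle_h$. For an edge $ij$ and a field $q\colon V\to\mathbb{R}^3$ the identity above gives
\[
\langle p_i - p_j,\ \overline q_i - \overline q_j\rangle_{2,1} = \langle p_i - p_j,\ q_i - q_j\rangle,
\]
where $\overline q = c\circ q$. Thus $q$ is an infinitesimal \emph{Euclidean} isometric deformation of $P$ if and only if $\overline q$ is an infinitesimal \emph{Minkowski} isometric deformation of the same surface, and $q\mapsto\overline q$ is a bijection between the two deformation spaces.

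Next I would match the decompositions. Since $\langle p_i, p_i\rangle_{2,1} = -1$, the Minkowski normal/tangent splitting of $\overline q_i$ reads $\overline q_i = \phi_i p_i + \xi_i$ with $\langle\xi_i, p_i\rangle_{2,1} = 0$, and
\[
\phi_i = -\langle\overline q_i, p_i\rangle_{2,1} = -\langle q_i, p_i\rangle = -f_i,\qquad \xi_i = \overline q_i + f_i\,p_i,
\]
so $\xi$ is precisely the field of the statement. Applying Theorem \ref{thm:HypIIDDiscretePrime} to the Minkowski deformation $\overline q$ yields $\triangle_h\phi = 2\phi$ together with the fact that $\xi$ is an infinitesimal conformal deformation of $P_h$ with conformal factor $-\phi$. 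Substituting $\phi = -f$ turns these into $\triangle_h f = 2f$ (part 1) and conformal factor $f$ (part 3). For the converse (part 2), given $f$ with $\triangle_h f = 2f$ on a simply connected $P_h$, I set $\phi = -f$, invoke the converse half of Theorem \ref{thm:HypIIDDiscretePrime} to produce a Minkowski isometric deformation $\widetilde q$ with radial component $\phi$, and put $q = c\circ\widetilde q$; then $q$ is Euclidean isometric and $\langle p_i, q_i\rangle = \langle p_i, \widetilde q_i\rangle_{2,1} = -\phi_i = f_i$, as required.

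The substantive content lives entirely in Theorem \ref{thm:HypIIDDiscretePrime}, which parallels the spherical Theorem \ref{thm:SphIIDDiscrete} and whose conformal conclusion is linked to Theorem \ref{thm:ConfFieldDiscreteHyp}; once that is in place the present reduction is purely formal. The main thing to watch is the sign bookkeeping forced by the normalization $\langle p_i, p_i\rangle_{2,1} = -1$: it is responsible both for the relation $\phi = -f$ and for the eigenvalue appearing as $+2$ rather than $-2$, in contrast with the spherical case. A secondary point to check explicitly is that passing between the Euclidean and Minkowski pictures of $P$ alters neither $P_h$ nor the weights entering $\triangle_h$, so that the equation delivered by the primed theorem really is an identity for $\triangle_h$ as defined on $P_h$.
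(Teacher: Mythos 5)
Your proposal is correct and takes essentially the same route as the paper: the authors likewise reduce Theorem \ref{thm:HypIIDDiscrete} to Theorem \ref{thm:HypIIDDiscretePrime} via the involution $c$ (Lemma \ref{lem:PogMap}, the infinitesimal Pogorelov map), writing $\overline{q}_i = -f_i p_i + \xi_i$ with $\langle \xi_i, p_i \rangle_{2,1} = 0$ so that the Minkowski radial component is $-f$ and the conformal factor becomes $f$. Your sign bookkeeping, the identification $\xi_i = \overline{q}_i + \langle p_i, q_i \rangle p_i$, and the converse direction via $q = c \circ \widetilde{q}$ all agree with the paper's argument.
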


As applications, we prove the infinitesimal rigidity of circle patterns on closed hyperbolic surfaces (Theorem \ref{thm:disrigid} and Corollary \ref{cor:circlerigid}), which is related to the deformation space of circle patterns on surfaces with complex projective structures \cite{lam2024sym}. In  \cite{lam2024discrete}, the second named author established a connection between the discrete hyperbolic Laplacian and discrete harmonic maps between hyperbolic surfaces, which recovers surfaces from prescribed edge weights.

The rest of the article is split in two parts: Section \ref{sec:sphLap} deals with the spherical discrete Laplacian, while Section \ref{sec:HypLap} deals with its hyperbolic counterpart.
In Section \ref{sec:GeomMeaning} a geometric interpretation of the edge weights and vertex weights is given, relating them to the geometry of the circumscribed polyhedron, combinatorially dual to the given triangulation.
In Section \ref{sec:SurfArea} the area formula from Theorem \ref{thm:AreaDiscrete} is proved.
Certain results of that section are used in Section \ref{sec:InfDef} to establish a connection between the $-2$-eigenfunctions of the discrete spherical Laplacian and the radial components of infinitesimal deformations of inscribed polyhedra, which are the first two parts of Theorem \ref{thm:SphIIDDiscrete}.
Discrete conformality is discussed in Section \ref{sec:InfConfDef}, where Theorem \ref{thm:ConfFieldDiscreteSph} and the last part of Theorem \ref{thm:SphIIDDiscrete} are proved.
A similar order of exposition repeats in Section \ref{sec:HypLap}. In Section \ref{sec:circleinfrigid}, we apply the results to prove the infinitesimal rigidity of circle patterns on closed hyperbolic surfaces.

Theorem \ref{thm:ConfFieldDiscreteSph} and Theorem \ref{thm:ConfFieldDiscreteHyp} can be proved alternatively by the variation of angle formulas \cite{Glickenstein2011,Glickenstein2017}. Our discrete Laplacians can be extended to hyperbolic (or spherical) surfaces with conical singularities using intrinsic Delaunay triangulation as in the Euclidean case \cite{Bobenko2007}.
A weak Laplacian can be defined on a Sobolev space of functions on the sphere following \cite{DebinFillastre2022}, using the relation between the surface area and the Laplacian, see Theorem \ref{thm:AreaSmooth}.
It would be interesting to see whether our discrete spherical Laplacian can be related to this construction by extending the given values $u_i$ at a given finite subset of the sphere to a function of a certain kind.

\section{Discrete spherical Laplacian}\label{sec:sphLap}
\subsection{Geometric meaning of the edge weights and vertex weights}
\label{sec:GeomMeaning}
Let $(V, E, F)$ be a triangulated surface without boundary where every triangle is equipped with a spherical metric.
This defines a \emph{spherical cone-metric} on the surface: a geometric structure locally modelled on $\mathbb{S}^2$ and, in the neighborhoods of the vertices, on spherical cones.
For an edge $ij \in E$ let $k$ and $l$ be the other vertices of the triangles sharing that edge.
The vertices $i, j, k, l$ can be sent to points $p_i, p_j, p_k, p_l$ on $\mathbb{S}^2$ so that the union of the triangles $ijk$ and $ijl$ is mapped isometrically.
As we will be dealing with oriented angles, let us assume that the triangle $p_ip_jp_k$ is oriented positively and therefore the triangle $p_ip_jp_l$ is oriented negatively.

\begin{lemma}
\label{lem:SpherWeightsPos}
The following are equivalent:
\begin{enumerate}
\item
The point $p_l$ lies on or outside of the circumcircle of the spherical triangle $p_ip_jp_k$ (the local Delaunay condition).
\item
The dihedral angle at the line $p_ip_j$ between the halfplanes $p_ip_jp_k$ and $p_ip_jp_l$ is less than or equal $\pi$ when viewed from the center of the sphere.
\item
The weight $c_{ij}$ from Definition \ref{def:dissph} is nonnegative.
\end{enumerate}
Besides, the point $p_l$ is on the circumcircle if and only if the dihedral angle at $p_ip_j$ equals $\pi$ and if and only if $c_{ij} = 0$.
\end{lemma}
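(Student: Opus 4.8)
The plan is to work in the ambient space $\mathbb{R}^3$ and to encode all three conditions through the positions of the two spherical circumcenters relative to the common edge. Write $p_i,p_j,p_k,p_l\in\mathbb{S}^2$ for the images of the four vertices, with $p_ip_jp_k$ positively and $p_ip_jp_l$ negatively oriented. Since the spherical circumcenter of a triangle is the unit vector orthogonal to the plane through its three vertices, I represent $o_{ijk},o_{ijl}$ by unit vectors $\mathbf o_{ijk},\mathbf o_{ijl}$. With the orthonormal frame $e=\frac{p_j-p_i}{|p_j-p_i|}$, $m=\frac{p_i+p_j}{|p_i+p_j|}$ (these are orthogonal because $\langle p_j-p_i,p_i+p_j\rangle=0$), and the unit normal $N$ to the great circle through $p_i,p_j$ chosen so that $\langle p_k,N\rangle>0$ (hence $\langle p_l,N\rangle<0$), both circumcenters are orthogonal to $p_j-p_i$ and so lie in the plane spanned by $m$ and $N$. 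I therefore set $\mathbf o_{ijk}=\cos\theta_k\,m+\sin\theta_k\,N$ and $\mathbf o_{ijl}=\cos\theta_l\,m+\sin\theta_l\,N$, so that $\theta_k,\theta_l$ are the signed spherical distances from the two circumcenters to the edge, both counted positively toward the $p_k$-side. The whole lemma follows once each of (1),(2),(3) is shown equivalent to the single inequality $\theta_k\ge\theta_l$, with equality corresponding to $p_l$ lying on the circumcircle.

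The first step is to identify the tangent terms of the weight with the circumcenter position. Because $o_{ijk}$ is equidistant from $p_i,p_j,p_k$, the triangle $o_{ijk}p_ip_j$ is isosceles, and the standard signed-angle bookkeeping gives that its base angle at $p_i$ equals $\tfrac12(\alpha_{jk}^i+\alpha_{ki}^j-\alpha_{ij}^k)$; dropping the perpendicular from $\mathbf o_{ijk}$ to the spherical midpoint of $p_ip_j$ and using the right-triangle relation $\tan(\text{base angle})=\tan\theta_k/\sin\tfrac{\lambda_{ij}}2$ turns the first tangent in Definition \ref{def:dissph} into $\tan\theta_k/\sin\tfrac{\lambda_{ij}}2$. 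The same computation for the negatively oriented triangle $ijl$ introduces a sign, so the second tangent equals $-\tan\theta_l/\sin\tfrac{\lambda_{ij}}2$, whence
\[
c_{ij}=\frac{\tan\theta_k-\tan\theta_l}{2\sin\frac{\lambda_{ij}}2\cos^2\frac{\lambda_{ij}}2}.
\]
As $\theta_k,\theta_l\in(-\tfrac\pi2,\tfrac\pi2)$ (the triangles lie in open hemispheres), $\tan$ is increasing and thus (3) $\Leftrightarrow\theta_k\ge\theta_l$, with $c_{ij}=0\Leftrightarrow\theta_k=\theta_l$.

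For (1), from $p_i=-\sin\tfrac{\lambda_{ij}}2\,e+\cos\tfrac{\lambda_{ij}}2\,m$ one gets $\langle\mathbf o_{ijk},p_i\rangle=\cos\theta_k\cos\tfrac{\lambda_{ij}}2=\cos\rho_k$, so the local Delaunay condition reads $\langle\mathbf o_{ijk},p_l\rangle\le\langle\mathbf o_{ijk},p_i\rangle$. I eliminate $\langle p_l,m\rangle$ using that $o_{ijl}$ is equidistant from $p_i$ and $p_l$, i.e. $\cos\theta_l\langle p_l,m\rangle+\sin\theta_l\langle p_l,N\rangle=\cos\theta_l\cos\tfrac{\lambda_{ij}}2$; a short substitution collapses the Delaunay inequality to
\[
\langle p_l,N\rangle\,\cos\theta_k\,(\tan\theta_k-\tan\theta_l)\le 0,
\]
and since $\langle p_l,N\rangle<0$ and $\cos\theta_k>0$ this is again $\theta_k\ge\theta_l$. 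For (2), I take the cross-section of the two faces in the $(m,N)$-plane: the edge projects to $\cos\tfrac{\lambda_{ij}}2\,m$, the faces to rays from it in directions $(-\sin\theta_k,\cos\theta_k)$ and $(\sin\theta_l,-\cos\theta_l)$, and measuring the wedge on the side of the origin gives the dihedral angle $\pi-(\theta_k-\theta_l)$; hence ``dihedral $\le\pi$'' $\Leftrightarrow\theta_k\ge\theta_l$, equal to $\pi$ exactly when $\theta_k=\theta_l$. Chaining the three equivalences proves the lemma, including the degenerate case.

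The two substitutions are routine; the one step deserving care is the signed-angle identity expressing the base angle at $p_i$ of the isosceles triangle $o_{ijk}p_ip_j$ as $\tfrac12(\alpha_{jk}^i+\alpha_{ki}^j-\alpha_{ij}^k)$, together with its sign reversal for the oppositely oriented triangle $ijl$, since a sign slip here would propagate into $c_{ij}$ and flip an inequality. I also need $\theta_k,\theta_l\in(-\tfrac\pi2,\tfrac\pi2)$, which amounts to each triangle being contained in an open hemisphere so that its circumradius stays below $\tfrac\pi2$; this is the standing nondegeneracy hypothesis for spherical Delaunay triangulations.
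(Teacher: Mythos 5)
Your proof is correct, and it shares its crux with the paper's proof while organizing everything else differently. The identity at the heart of both arguments is the same: the signed base angle at $p_i$ of the isosceles triangle $o_{ijk}p_ip_j$ equals $\tfrac12(\alpha_{jk}^i+\alpha_{ki}^j-\alpha_{ij}^k)$, which the paper derives by summing oriented angles $\angle(\wideparen{p_ip_j},\wideparen{p_io_{ijk}})$ around the three vertices of the isosceles decomposition, exactly the ``signed-angle bookkeeping'' you invoke (you should really carry out those three linear equations rather than assert them, since you yourself identify this as the step where a sign error would be fatal). After that the routes diverge. The paper stays intrinsic: it compares the two base angles directly, using $\tan\phi+\tan\psi=\sin(\phi+\psi)/(\cos\phi\cos\psi)$ to pass from positivity of the angle sum to positivity of $c_{ij}$, and it disposes of the equivalence $1\Leftrightarrow 2$ with a one-line synthetic argument (the interior of the circumcircle is the spherical cap cut off by the half-space not containing the center). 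You instead work extrinsically in $\mathbb{R}^3$ with the frame $(e,m,N)$, convert each tangent term to $\tan\theta/\sin\tfrac{\lambda_{ij}}2$ via the spherical right-triangle relation, and funnel all three conditions through the single inequality $\theta_k\ge\theta_l$. This costs more computation but buys something the paper leaves implicit: the paper's assertion that $p_l$ lies outside the circumcircle iff $\angle(\wideparen{p_io_{ijl}},\wideparen{p_io_{ijk}})>0$ tacitly uses the monotonicity of circles through $p_i,p_j$ as the center slides along the perpendicular bisector, whereas your substitution collapsing the Delaunay inequality to $\langle p_l,N\rangle\cos\theta_k(\tan\theta_k-\tan\theta_l)\le 0$ verifies this outright; likewise your cross-section computation of the dihedral angle as $\pi-(\theta_k-\theta_l)$ replaces the paper's synthetic half-space argument and checks correctly (I verified the ray directions point to the correct sides). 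One small imprecision: containment in an open hemisphere does not by itself force circumradius below $\tfrac\pi2$; the correct statement is that for a non-degenerate triangle the plane $p_ip_jp_k$ misses the origin, so the cap bounded by the circumcircle and containing the triangle automatically has radius $<\tfrac\pi2$, which yields $\theta_k,\theta_l\in(-\tfrac\pi2,\tfrac\pi2)$ — the same tacit normalization as the paper's ``isosceles triangles with legs of length $<\tfrac\pi2$.''
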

\begin{proof}
The circumcircle of $p_ip_jp_k$ is the intersection of the plane $p_ip_jp_k$ with $\mathbb{S}^2$, and its interior corresponds to the halfspace not containing the center of $\mathbb{S}^2$.
Therefore condition 1 is equivalent to the point $p_l$ and the center of $\mathbb{S}^2$ lying on the same side of the plane $p_ip_jp_k$ which, in turn, is equivalent to condition 2.

\begin{figure}[ht]
\begin{center}
\begin{picture}(0,0)%
\includegraphics{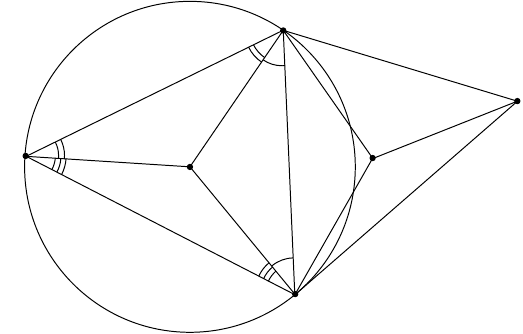}%
\end{picture}%
\setlength{\unitlength}{4144sp}%
\begingroup\makeatletter\ifx\SetFigFont\undefined%
\gdef\SetFigFont#1#2#3#4#5{%
  \reset@font\fontsize{#1}{#2pt}%
  \fontfamily{#3}\fontseries{#4}\fontshape{#5}%
  \selectfont}%
\fi\endgroup%
\begin{picture}(3982,2538)(619,-1860)
\put(1881,-503){\makebox(0,0)[lb]{\smash{{\SetFigFont{9}{10.8}{\rmdefault}{\mddefault}{\updefault}{\color[rgb]{0,0,0}$o_{ijk}$}%
}}}}
\put(2901,-1693){\makebox(0,0)[lb]{\smash{{\SetFigFont{9}{10.8}{\rmdefault}{\mddefault}{\updefault}{\color[rgb]{0,0,0}$p_i$}%
}}}}
\put(2799,518){\makebox(0,0)[lb]{\smash{{\SetFigFont{9}{10.8}{\rmdefault}{\mddefault}{\updefault}{\color[rgb]{0,0,0}$p_j$}%
}}}}
\put(3497,-635){\makebox(0,0)[lb]{\smash{{\SetFigFont{9}{10.8}{\rmdefault}{\mddefault}{\updefault}{\color[rgb]{0,0,0}$o_{ijl}$}%
}}}}
\put(634,-467){\makebox(0,0)[lb]{\smash{{\SetFigFont{9}{10.8}{\rmdefault}{\mddefault}{\updefault}{\color[rgb]{0,0,0}$p_k$}%
}}}}
\put(4586,-42){\makebox(0,0)[lb]{\smash{{\SetFigFont{9}{10.8}{\rmdefault}{\mddefault}{\updefault}{\color[rgb]{0,0,0}$p_l$}%
}}}}
\end{picture}%
\end{center}
\caption{Two adjacent spherical triangles and their circumcenters.}
\label{fig:SpherCircumcircle}
\end{figure}

Let $o_{ijk} \in \mathbb{S}^2$ be the center of the circumcircle of the triangle $p_ip_jp_k$.
For any three pairwise different and non-antipodal points $A, B, C \in \mathbb{S}^2$ denote by $\angle(\wideparen{AB}, \wideparen{AC})$ the oriented angle from the shortest great circle arc $\wideparen{AB}$ to the shortest great circle arc $\wideparen{AC}$.
Then, as the triangle $p_ip_jp_k$ is by our assumption positively oriented, one has
\[
\angle(\wideparen{p_ip_j}, \wideparen{p_io_{ijk}}) + \angle(\wideparen{p_io_{ijk}}, \wideparen{p_ip_k}) = \angle(\wideparen{p_ip_j}, \wideparen{p_ip_k}) = \alpha_{jk}^i.
\]
From this and the other two similar equations and from the equal pairs of angles in Figure \ref{fig:SpherCircumcircle} such as $\angle(\wideparen{p_ip_j}, \wideparen{p_io_{ijk}}) = \angle(\wideparen{p_jo_{ijk}}, \wideparen{p_jp_i})$ one computes
\[
\angle(\wideparen{p_ip_j}, \wideparen{p_io_{ijk}}) = \frac{\alpha_{ik}^j + \alpha_{jk}^i - \alpha_{ij}^k}2.
\]
As the triangle $p_ip_jp_l$ is negatively oriented, the corresponding formula for it has the opposite sign:
\[
\angle(\wideparen{p_ip_j}, \wideparen{p_io_{ijl}}) = - \frac{\alpha_{il}^j + \alpha_{jl}^i - \alpha_{ij}^l}2.
\]
The point $p_l$ lies outside of the circumcircle of the triangle $p_ip_jp_k$ if and only if the angle $\angle(\wideparen{p_io_{ijl}}, \wideparen{p_io_{ijk}})$ is positive, that is if and only if
\[
\frac{\alpha_{ik}^j + \alpha_{jk}^i - \alpha_{ij}^k}2 + \frac{\alpha_{il}^j + \alpha_{jl}^i - \alpha_{ij}^l}2 > 0.
\]
This is equivalent to
\[
\tan\left(\frac{\alpha_{ik}^j + \alpha_{jk}^i - \alpha_{ij}^k}2\right) + \tan\left(\frac{\alpha_{il}^j + \alpha_{jl}^i - \alpha_{ij}^l}2\right) > 0
\]
and hence to $c_{ij} > 0$ as $\tan \phi + \tan \psi = \frac{\sin(\phi + \psi)}{\cos\phi \cos\psi}$, and the angles $\phi$ and $\psi$ are in our case in the interval $\left(-\frac\pi2, \frac\pi2\right)$ being the (oriented) angles at the bases of isosceles triangles with legs of length $<\frac\pi2$.
This proves the equivalence of conditions 1 and 3.
The above arguments also prove the last statement of the lemma: $c_{ij} = 0$ is equivalent to the dihedral angle at $p_ip_j$ being equal to $\pi$ and to the circumcircle of $p_ip_jp_k$ containing $p_l$.
\end{proof}

Denote by $p_i^\ast, p_j^\ast, p_k^\ast, p_l^\ast$ the tangent planes to $\mathbb{S}^2$ at the points $p_i, p_j, p_k, p_l$ respectively.
Let $n_{ijk} \in \mathbb{R}^3$ be the intersection point of the planes $p_i^\ast$, $p_j^\ast$, $p_k^\ast$.
We call $n_{ijk}$ the \emph{dual vertex} corresponding to the face $ijk \in F$.
Connect the points $n_{ijk}$ and $n_{ijl}$ by a straight line segment, the \emph{dual edge} of the edge $ij \in E$.
As the points $n_{ijk}$ and $n_{ijl}$ lie on the intersection line of the planes $p_i^\ast$ and $p_j^\ast$, the vector $n_{ijk} - n_{ijl}$ is orthogonal to both unit vectors $p_i$ and $p_j$ and thus parallel to their cross product $p_i \times p_j$ (See Figure \ref{fig:3dtriangles}).

\begin{figure}[h!]
	 	\centering
	\includegraphics[width=0.65\textwidth]{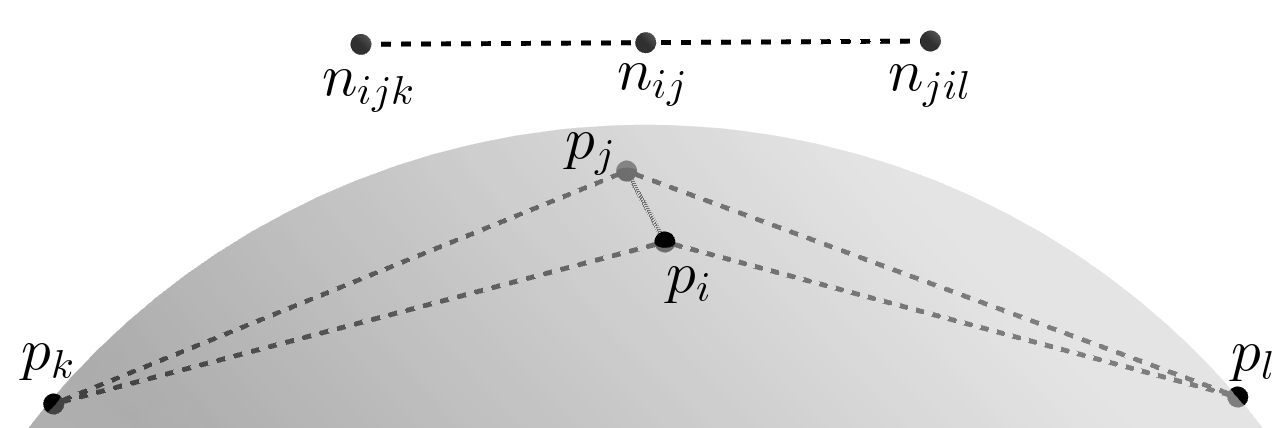}
	\caption{Two adjacent triangles in $\mathbb{R}^3$ with vertices $p_i,p_j,p_k,p_l$ on the sphere.}
	\label{fig:3dtriangles}
\end{figure}

If the total angle around the vertex $i$ is $2\pi$, that is $i$ is not a cone point of our spherical cone-metric, then all vertices adjacent to $i$ can be sent to $\mathbb{S}^2$ so that the union of triangles adjacent to $i$ is mapped isometrically.
The duals of the edges incident to $i$ form then a closed polygon in the plane $p_i^\ast$, the \emph{dual face} of the vertex $i$.
If the dihedral angles at all edges $p_ip_j$ are less or equal $\pi$, then the dual polygon of $i$ is convex, otherwise it is non-convex and self-intersecting.
Define the \emph{algebraic area} of the dual face as the sum of the areas of triangles $p_in_{ijk}n_{ijl}$ taken with the plus sign if $n_{ijk} - n_{ijl}$ is a positive multiple of $p_i \times p_j$ and with the minus sign otherwise.

\begin{lemma}
\label{lem:WeightsGeom}
For every edge $ij$ the dual edge is collinear with the cross product $p_i \times p_j$.
The scalar factor is the corresponding edge weight of the discrete spherical Laplacian:
\begin{equation}
\label{eqn:DualEdgeSpher}
n_{ijk} - n_{ijl} = c_{ij} (p_i \times p_j).
\end{equation}

In the non-singular case, when the union of all triangles adjacent to the vertex~$i$ is mapped isometrically to $\mathbb{S}^2$, the sum of the vertices adjacent to $p_i$ weighted with the corresponding edge weights is collinear with $p_i$.
The scalar factor is a combination of the edge and vertex weights:
\begin{equation}
\label{eqn:DualFaceSpher}
\sum_j c_{ij} p_j = \left( -2d_i + \sum_j c_{ij} \right) p_i.
\end{equation}
Moreover, $d_i$ is the algebraic area of the polygon dual to the vertex $i$.
\end{lemma}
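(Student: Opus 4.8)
The plan is to prove the dual-edge formula \eqref{eqn:DualEdgeSpher} first, as it is the computational heart of the lemma, and then to obtain both the area statement and the dual-face identity \eqref{eqn:DualFaceSpher} as formal consequences. Throughout I use that $p_i^\ast = \{x \mid \langle x, p_i\rangle = 1\}$, so the dual vertex $n_{ijk}$ is characterized by $\langle n_{ijk}, p_i\rangle = \langle n_{ijk}, p_j\rangle = \langle n_{ijk}, p_k\rangle = 1$.

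For \eqref{eqn:DualEdgeSpher}, the collinearity is immediate: subtracting $\langle n_{ijk}, p_i\rangle = \langle n_{ijl}, p_i\rangle = 1$ and the analogous equations for $p_j$ shows $n_{ijk} - n_{ijl} \perp p_i, p_j$, hence it is a multiple of $p_i \times p_j$. The content is the value of the scalar, which I would extract geometrically. First, $n_{ijk} = \sec(r_{ijk})\, o_{ijk}$, where $o_{ijk}$ is the spherical circumcenter (the unit normal to the plane $p_ip_jp_k$) and $r_{ijk} = \angle(o_{ijk}, p_i)$ is the circumradius: indeed $n_{ijk}$ lies on the ray through $o_{ijk}$, and the scale is fixed by $\langle n_{ijk}, p_i\rangle = 1$. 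The line $p_i^\ast \cap p_j^\ast$ meets $\mathrm{span}(p_i, p_j)$ at the foot $m = \sec\frac{\lambda_{ij}}{2}\, m_{ij}$, with $m_{ij}$ the spherical midpoint of $p_ip_j$; this foot is orthogonal to $\hat u := (p_i \times p_j)/\sin\lambda_{ij}$. Decomposing $n_{ijk}$ along $m_{ij}$ and $\hat u$ and using the right spherical triangle $p_i m_{ij} o_{ijk}$ --- right angle at $m_{ij}$, legs $\frac{\lambda_{ij}}{2}$ and $\rho_k$, hypotenuse $r_{ijk}$, and angle $\beta_k := \frac{\alpha_{jk}^i + \alpha_{ki}^j - \alpha_{ij}^k}{2}$ at $p_i$ (this identification of the oriented angle is exactly what was computed in the proof of Lemma \ref{lem:SpherWeightsPos}) --- together with the identities $\cos r_{ijk} = \cos\frac{\lambda_{ij}}{2}\cos\rho_k$ and $\tan\rho_k = \sin\frac{\lambda_{ij}}{2}\tan\beta_k$, I obtain that the coefficient of $p_i \times p_j$ in $n_{ijk}$ relative to the foot $m$ equals $\tan\beta_k / (2\cos^2\frac{\lambda_{ij}}{2})$. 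For the negatively oriented triangle $ijl$ the corresponding oriented angle is $-\beta_l$, so its coefficient carries the opposite sign, and the difference of the two coefficients is precisely the edge weight $c_{ij}$ of Definition \ref{def:dissph}.

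The remaining two statements then follow almost mechanically. List the neighbours $j_1, \dots, j_m$ of $i$ in cyclic order and set $n_a := n_{i j_a j_{a+1}}$, so that \eqref{eqn:DualEdgeSpher} becomes $n_a - n_{a-1} = c_{i j_a}(p_i \times p_{j_a})$. For the area, the signed area of the triangle $p_i n_{a-1} n_a$ is $\frac12\langle p_i, n_{a-1} \times n_a\rangle$; substituting $n_{a-1} = n_a - c_{ij_a}(p_i \times p_{j_a})$ and using $(p_i \times p_{j_a}) \times n_a = p_{j_a}\langle p_i, n_a\rangle - p_i\langle p_{j_a}, n_a\rangle = p_{j_a} - p_i$ turns this into $\frac{c_{ij_a}}{2}(1 - \cos\lambda_{ij_a}) = c_{ij_a}\sin^2\frac{\lambda_{ij_a}}{2}$. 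Since fan areas from a common apex sum to the signed area of the polygon, summing over $a$ gives exactly $d_i$, and the sign of each term agrees with the convention because the triangle is positively oriented iff $c_{ij_a} > 0$. For \eqref{eqn:DualFaceSpher}, telescoping around the closed dual polygon gives $0 = \sum_a (n_a - n_{a-1}) = p_i \times \sum_j c_{ij} p_j$, so $\sum_j c_{ij} p_j$ is a multiple of $p_i$; pairing with $p_i$ and writing $\cos\lambda_{ij} = 1 - 2\sin^2\frac{\lambda_{ij}}{2}$ gives the scalar $\sum_j c_{ij}\cos\lambda_{ij} = \sum_j c_{ij} - 2 d_i$, as claimed.

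I expect the scalar in \eqref{eqn:DualEdgeSpher} to be the only real obstacle, and specifically the bookkeeping of orientations: which of $ijk$, $ijl$ is positively oriented, and the signs of $\rho_k$ and of $\beta_k$ when a circumcenter lies on the far side of the edge $p_ip_j$. These must be tracked with oriented angles, consistently with Lemma \ref{lem:SpherWeightsPos}, and paired with the correct spherical right-triangle relations. A purely algebraic route is available --- write $n_{ijk} = \frac{p_i \times p_j + p_j \times p_k + p_k \times p_i}{\det(p_i, p_j, p_k)}$ and extract the $p_i \times p_j$-component by pairing with $p_i \times p_j$ --- but it trades the geometry for a longer trigonometric reduction to the tangent form, so I would keep the geometric computation and lean on Lemma \ref{lem:SpherWeightsPos} for the angle identification.
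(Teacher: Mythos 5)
Your proposal is correct and takes essentially the same route as the paper: both arguments reduce to the identical intermediate identity $n_{ijk} - n_{ij} = \frac{\tan\beta_k}{2\cos^2(\lambda_{ij}/2)}\, p_i \times p_j$ relative to the foot point $n_{ij} = \sec\frac{\lambda_{ij}}{2}\, m_{ij}$ on the line $p_i^\ast \cap p_j^\ast$, with the oriented angle $\beta_k$ imported from the proof of Lemma \ref{lem:SpherWeightsPos}, and then conclude by the same telescoping argument for \eqref{eqn:DualFaceSpher} and the same fan-triangle summation for $d_i$. The only differences are cosmetic: you extract the coefficient via Napier's rules in the spherical right triangle $p_i m_{ij} o_{ijk}$ together with the radial scaling $n_{ijk} = \sec(r_{ijk})\, o_{ijk}$, where the paper uses the Euclidean right triangle $p_i n_{ij} n_{ijk}$ in the tangent plane $p_i^\ast$, and you evaluate each fan triangle's signed area by the triple product $\frac12\langle p_i, n_{a-1} \times n_a\rangle$ where the paper uses base times height.
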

\begin{proof}
Points on the line $p_i^\ast \cap p_j^\ast$ are equidistant from $p_i$ and $p_j$.
Hence the point $n_{ijk}$ is equidistant from $p_i, p_j$, and $p_k$.
It follows that the sphere center $o$, the triangle circumcenter $o_{ijk}$, and the point $n_{ijk}$ are collinear.
Thus the line $p_in_{ijk}$ is tangent to the great circle arc $\wideparen{p_io_{ijk}}$.
Denote by $n_{ij}$ the common foot of the perpendiculars from $p_i$ and $p_j$ on $p_i^\ast \cap p_j^\ast$, see Figure \ref{fig:DualPolyh}.
Then the line $p_in_{ij}$ is tangent to the arc $\wideparen {p_ip_j}$, and one has
\[
\angle(p_in_{ij}, p_in_{ijk}) = \angle(\wideparen{p_ip_j}, \wideparen{p_io_{ijk}}) = \frac{\alpha_{ik}^j + \alpha_{jk}^i - \alpha_{ij}^k}2.
\]
Consider the unit vector
\[
e_{ij} = \frac{p_i \times p_j}{\|p_i \times p_j\|}.
\]
The vector $n_{ijk} - n_{ij}$ is a positive multiple of $e_{ij}$ if and only if $\frac{\alpha_{ik}^j + \alpha_{jk}^i - \alpha_{ij}^k}2 > 0$.
From this and from the right-angled triangle $p_in_{ij}n_{ijk}$ one computes
\[
n_{ijk} - n_{ij} = \|n_{ij} - p_i\| \tan \frac{\alpha_{ik}^j + \alpha_{jk}^i - \alpha_{ij}^k}2 e_{ij}.
\]
The spherical distance $\lambda_{ij}$ between the points $p_i, p_j \in \mathbb{S}^2$ is the angle between the vectors $p_i, p_j \in \mathbb{R}^3$.
One has
\[
\|n_{ij} - p_i\| = \tan \frac{\lambda_{ij}}{2}, \quad
\|p_i \times p_j\| = \sin \lambda_{ij}.
\]
Substituting this into the above formula one obtains
\[
n_{ijk} - n_{ij} = \frac{\tan \dfrac{\alpha_{ik}^j + \alpha_{jk}^i - \alpha_{ij}^k}2}{2 \cos^2 \dfrac{\lambda_{ij}}{2}} \, p_i \times p_j.
\]
A similar formula holds for the vector $n_{ij} - n_{ijl}$.
Adding them together one obtains~\eqref{eqn:DualEdgeSpher}.

\begin{figure}
\begin{center}
\begin{picture}(0,0)%
\includegraphics{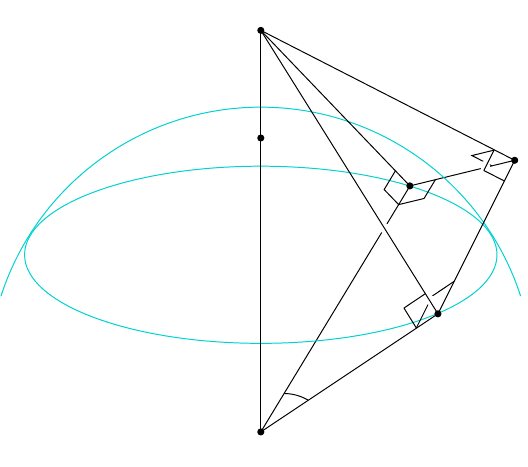}%
\end{picture}%
\setlength{\unitlength}{4144sp}%
\begingroup\makeatletter\ifx\SetFigFont\undefined%
\gdef\SetFigFont#1#2#3#4#5{%
  \reset@font\fontsize{#1}{#2pt}%
  \fontfamily{#3}\fontseries{#4}\fontshape{#5}%
  \selectfont}%
\fi\endgroup%
\begin{picture}(4014,3543)(-187,-1676)
\put(2079,-1079){\makebox(0,0)[lb]{\smash{{\SetFigFont{9}{10.8}{\rmdefault}{\mddefault}{\updefault}{\color[rgb]{0,0,0}$\lambda_{ij}$}%
}}}}
\put(1824,-1621){\makebox(0,0)[lb]{\smash{{\SetFigFont{9}{10.8}{\rmdefault}{\mddefault}{\updefault}{\color[rgb]{0,0,0}$o$}%
}}}}
\put(1860,705){\makebox(0,0)[lb]{\smash{{\SetFigFont{9}{10.8}{\rmdefault}{\mddefault}{\updefault}{\color[rgb]{0,0,0}$o_{ijk}$}%
}}}}
\put(1833,1708){\makebox(0,0)[lb]{\smash{{\SetFigFont{9}{10.8}{\rmdefault}{\mddefault}{\updefault}{\color[rgb]{0,0,0}$n_{ijk}$}%
}}}}
\put(3812,625){\makebox(0,0)[lb]{\smash{{\SetFigFont{9}{10.8}{\rmdefault}{\mddefault}{\updefault}{\color[rgb]{0,0,0}$n_{ij}$}%
}}}}
\put(2912,558){\makebox(0,0)[lb]{\smash{{\SetFigFont{9}{10.8}{\rmdefault}{\mddefault}{\updefault}{\color[rgb]{0,0,0}$p_j$}%
}}}}
\put(3182,-653){\makebox(0,0)[lb]{\smash{{\SetFigFont{9}{10.8}{\rmdefault}{\mddefault}{\updefault}{\color[rgb]{0,0,0}$p_i$}%
}}}}
\end{picture}%
\end{center}
\caption{To the proof of Lemma 2.2.}
\label{fig:DualPolyh}
\end{figure}

Equation \eqref{eqn:DualEdgeSpher} implies
\[
p_i \times \sum_j c_{ij} p_j = \sum_j c_{ij} (p_i \times p_j) = \sum (n_{ijk} - n_{ijl}) = 0,
\]
hence $\sum_j c_{ij} p_j$ is collinear with $p_i$.
To compute the scalar factor, take the inner product:
\begin{multline*}
\left\langle p_i, \sum_j c_{ij} p_j \right\rangle = \sum_j c_{ij} \langle p_i, p_j \rangle = \sum_j c_{ij} \cos\lambda_{ij}\\
= \sum_j c_{ij} \left( 1 - 2\sin^2 \frac{\lambda_{ij}}2 \right) = - 2 d_i + \sum_j c_{ij}.
\end{multline*}
Equation \eqref{eqn:DualFaceSpher} follows.

Finally, the signed area of the triangle $p_in_{ijk}n_{ijl}$ is equal to
\[
\pm \frac12 \|n_{ijk} - n_{ijl}\| \|n_{ij} - p_i\|,
\]
where the sign corresponds to the sign of $c_{ij}$.
Due to \eqref{eqn:DualEdgeSpher} the $\pm \|n_{ijk} - n_{ijl}\|$ factor equals $c_{ij} \sin\lambda_{ij}$, so that the algebraic area of the dual face is
\[
\frac12 \sum_j c_{ij} \sin\lambda_{ij} \tan \frac{\lambda_{ij}}2 = \sum_j c_{ij} \sin^2 \frac{\lambda_{ij}}2 = d_i,
\]
and the lemma is proved.
\end{proof}

The geometric interpretation of the spherical edge weight in Lemma \ref{lem:WeightsGeom} resembles that of the Euclidean edge weight as described by equation~\eqref{eqn:CotWeightGeom}.
The spherical vertex weight is similar to the area of the dual face (Voronoi cell if the triangulation is Delaunay) in the Euclidean case.

\begin{remark}
Formula for $c_{ij}$ of Lemma \ref{lem:WeightsGeom} appears in \cite{Lovasz2001} (without the restriction on the vectors $p_i$ to have unit norm) as formula for the entries of a Colin de Verdi\`ere matrix.
A generalization to higher dimensions is given in \cite{Izmestiev2010}.
\end{remark}

\begin{remark}
If the total angle around the vertex $i$ is different from $2\pi$, then it is still possible to define the algebraic area of the dual face as the sum of signed areas of triangles $p_in_{ijk}n_{ijl}$, and this area is equal to $d_i$.
Equation \eqref{eqn:DualFaceSpher} does not make sense anymore, but the obvious equation $\sum_j c_{ij} \cos\lambda_{ij} = -2d_i + \sum_j c_{ij}$ still holds.

\end{remark}

\subsection{Surface area of a convex polyhedron}
\label{sec:SurfArea}
In Theorem \ref{thm:AreaDiscrete} one deals with a convex polyhedron described by means of the outward unit normals to its faces and signed distances of the faces from the origin:
\begin{equation}
\label{eqn:Kh}
P(h) = \{x \in \mathbb{R}^3 \mid \langle x, p_i \rangle \le h_i \text{ for all }i\}.
\end{equation}
View $p_1, \ldots, p_n$ as points on $\mathbb{S}^2$ and connect two points $p_i$ and $p_j$ by the shortest great circle arc if the corresponding faces of the polyhedron share an edge.
This results in a geodesic subdivision (almost always a triangulation) of $\mathbb{S}^2$, which is called the \emph{weighted Delaunay subdivision} corresponding to the chosen weights $h_1, \ldots, h_n$.
A geodesic subdivision is called \emph{regular} if it can be realized as a weighted Delaunay subdivision for some choice of weights.
Not every geodesic subdivision is regular (even when it is a triangulation), see \cite{FillastreIzmestiev2017}.
It is shown in \cite{McMullen1973, FillastreIzmestiev2017} that for every collection of unit vectors $p_1, \ldots, p_n$ the admissible support numbers $h_1, \ldots, h_n$ (those for which none of the inequalities in \eqref{eqn:Kh} is redundant) form a convex polyhedral cone in $\mathbb{R}^n$.
This cone is further subdivided into so called \emph{type cones} corresponding to different combinatorics of the polyhedron $P(h)$.
Every $h$ from the cone of admissible support numbers belongs to some full-dimensional type cone $\mathcal{T}$.
Interior points of $\mathcal{T}$ correspond to simple polyhedra (three faces meeting at every vertex), that is to weighted Delaunay triangulations, points on the boundary of $\mathcal{T}$ correspond to removal of some of the edges of that triangulation resulting in weighted Delaunay subdivisions.

If $h_i = 1$ for all $i$, that is the polyhedron $P(h)$ is circumscribed about $\mathbb{S}^2$, then its vertices, edges and faces are the dual vertices $n_{ijk}$, dual edges and dual faces of the Delaunay subdivision of $\mathbb{S}^2$ with the vertices $p_1, \ldots, p_n$, see the paragraphs preceding Lemma \ref{lem:WeightsGeom}.
The dual vertices project to the circumcenters of triangles, the dual edges to the edges of the Voronoi cells.
The formula of Theorem \ref{thm:AreaDiscrete} says in this case that the area of the circumscribed polyhedron is equal to $\frac12 \sum_i d_i$, which follows directly from Lemma \ref{lem:WeightsGeom}, as $d_i$ is the area of the $i$-th dual face.

At the same time, for a general $h$ the triangulation used in Theorem \ref{thm:AreaDiscrete} is not necessarily Delaunay, so that some of the edge weights may be negative.
This shows that the discrete spherical Laplacian makes sense not only for Delaunay triangulations, but at least for regular triangulations as well.

\begin{proof}[Proof of Theorem \ref{thm:AreaDiscrete}]
Fix a type cone $\mathcal{T}$ to which our support numbers $h$ belong and let $h$ vary over $\mathcal{T}$.
Denote by $A_i(h)$ the area of the $i$-th face, by $A(h) = \sum_i A_i(h)$ the total surface area, and by $V(h)$ the volume of $P(h)$.
Simple geometric arguments illustrated by Figure \ref{fig:VolPartDer} show that
\[
\frac{\partial V}{\partial h_i} = A_i
\]
\begin{equation}
\label{eqn:DA}
\frac{\partial^2 V}{\partial h_i \partial h_j} = \frac{\partial A_i}{\partial h_j} = \frac{L_{ij}}{\sin\lambda_{ij}} \text{ for }i \ne j, \quad \frac{\partial^2 V}{\partial h_i^2} = \frac{\partial A_i}{\partial h_i} = - \sum_j L_{ij} \cot\lambda_{ij},
\end{equation}
where $L_{ij}(h)$ is the length of the edge of $P(h)$ shared by the $i$-th and the $j$-th faces.

\begin{figure}
\begin{center}
\includegraphics[width=.8\textwidth]{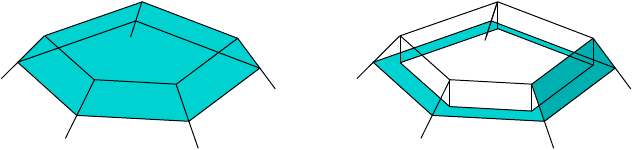}
\end{center}
\caption{Partial derivatives of the volume and the face areas. The increments of the volume and of the face areas are shaded.}
\label{fig:VolPartDer}
\end{figure}

At the same time the functions $A(h)$ and $V(h)$ are homogeneous polynomials in $h_1, \ldots, h_n$ of degrees $2$ and $3$, respectively.
This follows from the formulas
\[
V(h) = \frac13 \sum_i h_i A_i(h), \quad A_i(h) = \frac12 \sum_j h_{ij}(h) L_{ij}(h),
\]
where $h_{ij}$ is the distance from the projection of the origin on the plane of the $i$-th face to the line containing the $ij$-th edge, and from the fact that $h_{ij}$ and $L_{ij}$ are linear functions of $h$.
Let
\[
V(h) = \sum_{i,j,k} b_{ijk} h_ih_jh_k, \quad A(h) = \sum_{i,j} a_{ij} h_ih_j
\]
with $b_{\sigma(i)\sigma(j)\sigma(k)} = b_{ijk}$ for all permutations $\sigma \in S_3$ and $a_{ij} = a_{ji}$.
Differentiating the polynomial $V(h)$ one obtains
\begin{gather*}
\frac{\partial V}{\partial h_i} = \sum_{j,k} b_{ijk} h_jh_k + \sum_{j,k} b_{jik} h_jh_k + \sum_{j,k} b_{jki} h_jh_k = 3 \sum_{j,k} b_{ijk} h_jh_k,\\
\frac{\partial^2 V}{\partial h_i \partial h_j} = 6 \sum_k b_{ijk} h_k,
\end{gather*}
which implies
\[
A(h) = \sum_{i=1}^n A_i(h) = \sum_{i=1}^n \frac{\partial V}{\partial h_i} = 3\sum_{i,j,k} b_{ijk}h_jh_k = 3 \sum_{i,j,k} b_{ijk}h_ih_j
\]
and hence
\[
a_{ij} = 3\sum_k b_{ijk} = \frac12 \left. \frac{\partial^2V}{\partial h_i \partial h_j} \right|_{h=1}.
\]
(Observe that although the point $h_i = 1$ for all $i$ does not necessarily belong to our type cone so that formally the function $V(h)$ may be undefined at this point, one can substitute $h=1$ into the polynomial expression for $V(h)$.)
Together with equation \eqref{eqn:DA} this implies

\begin{align}\label{eqn:aij}
	\begin{split}
		a_{ij} &= \frac{L_{ij}(1)}{2\sin\lambda_{ij}} = \frac{c_{ij}}2 \text{ for }i \ne j,\\
		a_{ii} &= -\frac12 \sum_j L_{ij}(1) \cot\lambda_{ij} = -\frac12 \sum_j c_{ij} \cos\lambda_{ij},
	\end{split}
\end{align}
as for $h=1$ the edge of $P(1)$ shared by the $i$-th and the $j$-th faces is exactly the dual edge $n_{ijk}n_{ijl}$ from the paragraphs preceding Lemma \ref{lem:WeightsGeom}.

On the other hand one has
\begin{multline*}
\frac12 \langle h, 2h + \triangle_sh \rangle_d = \sum_{i=1}^n d_i h_i^2 + \frac12 \sum_{i=1}^n h_i \sum_{j=1}^n c_{ij}(h_j - h_i)\\
= \sum_{i=1}^n \left( d_i - \frac12 \sum_{j=1}^n c_{ij} \right) h_i^2 + \sum_{\{i,j\}} c_{ij}h_ih_j,
\end{multline*}
and
\[
d_i - \frac12 \sum_{j=1}^n c_{ij} = \sum_{j \ne i} c_{ij} \left(\sin^2 \frac{\lambda_{ij}}2 - \frac12 \right) = -\frac12 \sum_{j \ne i} c_{ij} \cos\lambda_{ij}.
\]
Thus in view of \eqref{eqn:aij} one has
\[
A(h) = \sum_{i,j} a_{ij} h_ih_j = \sum_i a_{ii} h_i^2 + 2 \sum_{\{i,j\}} a_{ij} h_ih_j = \frac12 \langle h, 2h + \triangle_sh \rangle_d,
\]
and the theorem is proved.
\end{proof}

\begin{remark}
The vertex weight $d_i$ of a non-Delaunay triangulation may be negative or, even worse, may vanish.
In the latter case the formula \eqref{eqn:DiscLapl} does not make sense.
The weak Laplacian however remains defined, and the formula for $\langle h, 2h + \triangle_s h \rangle_d$ contains $d_i$ but does not involve division by zero.
\end{remark}

\subsection{Infinitesimal isometric deformations of inscribed polyhedra}
\label{sec:InfDef}
Let $(V, E)$ be a graph.
A \emph{realization} of $(V, E)$ in $\mathbb{R}^3$ is a map $p \colon V \to \mathbb{R}^3$ such that $p_i \ne p_j$ for all $ij \in E$.
An \emph{infinitesimal deformation} of a realization $p$ is an assignment $q \colon V \to \mathbb{R}^3$ of a vector $q_i$ to every vertex $i$.
An infinitesimal deformation is called \emph{isometric} if for every edge $ij$ one has
\[
\langle p_j - p_i, q_j - q_i \rangle = 0.
\]
This equation is equivalent to
\[
\left. \frac{d}{dt} \right|_{t=0} \|p_j(t) - p_i(t)\| = 0
\]
for any one-parameter family of realizations $p(t)$ such that $p(0) = p$ and $p'(0) = q$.
The restriction of any Killing field on $\mathbb{R}^3$
\[
q_i = r \times p_i + t, \quad r, t \in \mathbb{R}^3,
\]
is an isometric infinitesimal deformation and is called \emph{trivial}.

Let $(V, E)$ be the graph of a triangulated surface $(V, E, F)$, and let $p$ be such realization that for all $ijk \in F$ the triangle $p_ip_jp_k$ does not degenerate to a segment.
A triangle in $\mathbb{R}^3$ is infinitesimally rigid, that is all of its isometric infinitesimal deformations are trivial.
Therefore to every infinitesimal isometric deformation of $(V, E)$ one can associate a \emph{rotation field} and a \emph{translation field}, which are maps
\[
r \colon F \to \mathbb{R}^3, \quad t \colon F \to \mathbb{R}^3
\]
such that for every vertex $i$ of every face $ijk$ one has
\begin{equation}
\label{eqn:RotTranslFields}
q_i = r_{ijk} \times p_i + t_{ijk}.
\end{equation}

\begin{lemma}
\label{lem:RotTranslProp}
Let $q$ be an infinitesimal isometric deformation of an $\mathbb{R}^3$-realization $p$ of a triangulated surface $(V, E, F)$, and let $r$ and $t$ be its rotation and translation fields.
Then there is a function $s \colon E \to \mathbb{R}$ such that for every edge $ij$ and its two adjacent triangles $ijk$ and $ijl$ one has
\begin{equation}
\label{eqn:RTStress}
r_{ijk} - r_{ijl} = s_{ij}(p_j - p_i), \quad t_{ijk} - t_{ijl} = s_{ij} (p_i \times p_j)
\end{equation}
and for all vertices $i$ one has
\begin{equation}
\label{eqn:Stress}
\sum_j s_{ij} (p_j - p_i) = 0.
\end{equation}

Conversely, if $p \colon V \to \mathbb{R}^3$ is a realization of a simply-connected triangulated surface $(V, E, F)$ and $s \colon E \to \mathbb{R}$ is a function satisfying \eqref{eqn:Stress}, then there is an infinitesimal isometric deformation $q$ of $p$, unique modulo trivial infinitesimal isometric deformations, such that its rotation and translation fields satisfy \eqref{eqn:RTStress}.
\end{lemma}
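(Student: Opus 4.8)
The plan is to read the two relations \eqref{eqn:RTStress} as a per-edge compatibility condition on the rotation and translation fields, and to read \eqref{eqn:Stress} as the integrability ("closedness") condition that allows such fields to be reconstructed; the converse then becomes an integration over the simply-connected complex.

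First I would establish \eqref{eqn:RTStress}. Fix an edge $ij$ with adjacent faces $ijk$ and $ijl$. Both faces contain $i$ and $j$, so \eqref{eqn:RotTranslFields} gives two expressions for each of $q_i$ and $q_j$; writing $\delta r = r_{ijk}-r_{ijl}$ and $\delta t = t_{ijk}-t_{ijl}$, these read $\delta r\times p_i = -\delta t = \delta r\times p_j$. Subtracting gives $\delta r\times(p_i-p_j)=0$, so $\delta r$ is a scalar multiple of the nonzero edge vector; this defines $s_{ij}$ by $\delta r = s_{ij}(p_j-p_i)$. Back-substituting, $\delta t = -s_{ij}(p_j-p_i)\times p_i = s_{ij}(p_i\times p_j)$, which is the second half of \eqref{eqn:RTStress}. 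For \eqref{eqn:Stress} I would use that the faces in the star of a vertex $i$ form a cyclic sequence in which consecutive faces share an edge through $i$; summing the first relation of \eqref{eqn:RTStress} around this cycle telescopes the rotation field back to itself, so, with the orientation convention of \eqref{eqn:RTStress}, $\sum_j s_{ij}(p_j-p_i)=0$.

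For the converse the point is to view the relations \eqref{eqn:RTStress} as prescribing the jumps of the pair $(r,t)$, that is, of the infinitesimal Euclidean motion $x\mapsto r\times x+t$ attached to each face: across the dual edge of $ij$ the pair jumps by $\bigl(s_{ij}(p_j-p_i),\,s_{ij}(p_i\times p_j)\bigr)$, which is exactly $s_{ij}$ times the infinitesimal rotation about the line through $p_i$ and $p_j$. To integrate this prescription to honest fields $r,t\colon F\to\mathbb{R}^3$ on the dual graph, the prescribed jumps must sum to zero around every cycle; since $M$ is simply connected it suffices to check the cycles bounding the dual faces, i.e. the vertex stars. The rotation part of this closedness is precisely \eqref{eqn:Stress}, while the translation part is automatic, because $\sum_j s_{ij}(p_i\times p_j)=p_i\times\sum_j s_{ij}p_j=p_i\times\bigl(\sum_j s_{ij}\bigr)p_i=0$ by \eqref{eqn:Stress}. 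Thus $r$ and $t$ exist, each unique up to an additive constant. I would then set $q_i := r_{ijk}\times p_i + t_{ijk}$ for any face $ijk$ containing $i$; this is well defined since across each incident edge $\delta r\times p_i+\delta t = -s_{ij}(p_i\times p_j)+s_{ij}(p_i\times p_j)=0$ and consecutive faces of the star share such an edge. For an edge $ij$ in a face $ijk$ one gets $q_j-q_i=r_{ijk}\times(p_j-p_i)\perp(p_j-p_i)$, so $q$ is an infinitesimal isometric deformation with rotation and translation fields $r,t$. Finally, changing the integration constants $r_0,t_0$ alters $q_i$ by $r_0\times p_i+t_0$, a trivial Killing deformation, giving uniqueness modulo trivial deformations.

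The hard part will be the integrability step: making precise that on a simply-connected complex the vanishing of the prescribed jumps around each vertex star yields a globally consistent primitive $(r,t)$, together with the orientation bookkeeping needed so that all the signs in the cyclic sums align with \eqref{eqn:Stress}. What lightens the argument is that only the rotation part requires the hypothesis, the translation closedness following for free from $p_i\times\sum_j s_{ij}p_j=0$.
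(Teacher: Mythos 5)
Your proposal is correct and follows essentially the same route as the paper: extract $s_{ij}$ by comparing the two face-wise expressions for $q_i$ and $q_j$, integrate $r$ and then $t$ over the dual graph using simple connectedness with the vertex stars as the basic cycles, and recover $q$ via \eqref{eqn:RotTranslFields}. If anything, you are slightly more explicit than the paper, which leaves the telescoping verification of \eqref{eqn:Stress} and the computation $\sum_j s_{ij}(p_i\times p_j)=p_i\times\bigl(\sum_j s_{ij}\bigr)p_i=0$ for the translation part implicit.
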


A function $s$ with the property \eqref{eqn:Stress} is called a \emph{self-stress} on the graph of the realization $v$.
The above lemma is classical and goes back probably to the XIX century.
We provide a proof for completeness.

\begin{proof}
The rotation and translation vectors of the triangles $ijk$ and $ijl$ allow to represent each of the vectors $q_i, q_j$ in two ways:
\begin{gather*}
r_{ijk} \times p_i + t_{ijk} = q_i = r_{ijl} \times p_i + t_{ijl},\\
r_{ijk} \times p_j + t_{ijk} = q_j = r_{ijl} \times p_j + t_{ijl}.
\end{gather*}
Taking differences results in
\[
(r_{ijk} - r_{ijl}) \times p_i = t_{ijl} - t_{ijk} = (r_{ijk} - r_{ijl}) \times p_j,
\]
which implies that $r_{ijk} - r_{ijl}$ is parallel to $p_j - p_i$, thus there is $s \colon E \to \mathbb{R}$ such that $r_{ijk} - r_{ijl} = s_{ij}(p_j - p_i)$.
Substituting this into the last displayed equation one obtains $t_{ijk} - t_{ijl} = s_{ij}(p_i \times p_j)$, and the first part of the lemma is proved.

For the second part first choose the value of $r$ at one of the triangles $\Delta_0 \in F$ arbitrarily.
Then, for any other triangle $\Delta \in F$ choose a path across edges from $\Delta_0$ to $\Delta$ and extend $r$ along this path according to the first formula in \eqref{eqn:RTStress}.
The result is independent on the choice of the path due to the closing condition \eqref{eqn:Stress} and to the simple connectedness of the surface.
The same works for $t$: its value at one of the triangles can be chosen arbitrarily, and the closing condition \eqref{eqn:Stress} ensures that $t$ can be extended to all of $F$ so that the second equation in \eqref{eqn:RTStress} holds.
Having defined $r, t \colon F \to \mathbb{R}^3$ this way, define $q \colon V \to \mathbb{R}^3$ as in \eqref{eqn:RotTranslFields}.
This $q$ is well-defined (that is, the value of $q_i$ is independent on the choice of an edge $jk$ forming a triangle with $i$) due to \eqref{eqn:RTStress}.
Besides, this $q$ is an infinitesimal isometric deformation of $p$ as \eqref{eqn:RTStress} implies $q_j - q_i = s_{ij} (p_j - p_i) \times p_i$, which is orthogonal to $p_j - p_i$.
\end{proof}

\begin{proof}[Proof of Theorem \ref{thm:SphIIDDiscrete}, part 1]
The radial component $f$ of $q$ is equal to
\[
f_i = \langle q_i, p_i \rangle.
\]
Decompose the difference of the values of $f$ at a pair of adjacent vertices as
\[
f_j - f_i = \langle q_j, p_j \rangle - \langle q_i, p_i \rangle = \langle p_j, q_j - q_i \rangle + \langle q_i, p_j - p_i \rangle.
\]
This allows to write the Laplacian as
\begin{equation}
\label{eqn:LaplParts}
(\triangle_s f)_i = \frac{1}{d_i} \sum_j c_{ij} \langle p_j, q_j - q_i \rangle + \left\langle \frac{q_i}{d_i}, \sum_j c_{ij}(p_j - p_i) \right\rangle
\end{equation}
Due to $q_j - q_i = r_{ijk} \times (p_j - p_i)$ one has
\[
\langle p_j, q_j - q_i \rangle = \det(p_j, r_{ijk}, p_j - p_i) = -\det(p_i, p_j, r_{ijk}) = - \langle p_i \times p_j, r_{ijk} \rangle,
\]
so that because of \eqref{eqn:DualEdgeSpher} the first sum in \eqref{eqn:LaplParts} transforms as
\begin{multline*}
\sum_j c_{ij} \langle p_j, q_j - q_i \rangle = - \sum_j c_{ij} \langle p_i \times p_j, r_{ijk} \rangle\\
= - \sum_j \langle n_{ijk} - n_{ijl}, r_{ijk} \rangle = -\sum_j \langle n_{ijk}, r_{ijk} - r_{ijl} \rangle = 0
\end{multline*}
because $n_{ijk}$ is orthogonal to $p_j - p_i$ which by \eqref{eqn:RTStress} is parallel to $r_{ijk} - r_{ijl}$.

To compute the second sum in \eqref{eqn:LaplParts} use \eqref{eqn:DualFaceSpher}.
It implies
\[
\sum_j c_{ij} (p_j - p_i) = -2d_ip_i,
\]
and therefore
\[
(\triangle_s f)_i = \left\langle \frac{q_i}{d_i}, -2d_ip_i \right\rangle = -2 \langle q_i, p_i \rangle = -2f_i,
\]
which concludes the proof.
\end{proof}

For the second part of Theorem \ref{thm:SphIIDDiscrete} some preliminaries are needed.
As in the paragraphs preceding Lemma \ref{lem:WeightsGeom} construct the dual polyhedral surface with faces in the planes
\[
p_i^\ast = \{x \mid \langle p_i, x \rangle = 1\}
\]
and vertices $n_{ijk}$, which are the intersection points of $p_i^\ast, p_j^\ast, p_k^\ast$.
The dual faces are in general non-convex and self-intersecting but have well-defined algebraic areas $A_i$.
For a function $f \colon V \to \mathbb{R}$ consider a 1-parameter deformation of the dual polyhedron obtained by moving the dual planes with the velocities $f_i$ away or, respectively, towards the origin:
\begin{equation}
\label{eqn:DeformDualPolyh}
p_i^\ast(\epsilon) = \{x \mid \langle p_i, x \rangle = 1 + \epsilon f_i\}.
\end{equation}
The combinatorics of the dual polyhedron is kept fixed: to every face $ijk \in F$ one associates a dual vertex $n_{ijk}(\epsilon)$, which is the intersection point of $p_i^\ast(\epsilon), p_j^\ast(\epsilon), p_k^\ast(\epsilon)$, and to every edge $ij \in E$ one associates a dual edge $n_{ijk}(\epsilon)n_{ijl}(\epsilon)$.
Let $A_i(\epsilon)$ be the algebraic area of the $i$-th face of the deformed dual polyhedron.
By definition one has
\begin{equation}
\label{eqn:AlgArea}
A_i(\epsilon) = \frac12 \sum_j h_{ij}(\epsilon) L_{ij}(\epsilon),
\end{equation}
where $h_{ij}(\epsilon)$ is the signed distance from the projection of $o$ to $p_i^\ast(\epsilon)$ to the line $p_i^\ast(\epsilon) \cap p_j^\ast(\epsilon)$, and $L_{ij}(\epsilon)$ is the signed length of the dual edge $n_{ijk}(\epsilon)n_{ijl}(\epsilon)$.
One has $h_{ij}(0) = \|n_{ij} - p_j\| > 0$, therefore $h_{ij}(\epsilon)$ remains positive for small $\epsilon$, the sign of $L_{ij}(\epsilon)$ is positive if and only if $n_{ijk} - n_{ijl}$ is a positive multiple of $p_i \times p_j$ (assuming the triangle $p_ip_jp_k$ is positively oriented).

\begin{lemma}
\label{lem:ADer}
The derivative of $A_i$ as a function of $\epsilon$ satisfies
\[
A'_i = \sum_j h_{ij} L'_{ij} = \sum_j h'_{ij} L_{ij}.
\]
Besides, $(\triangle_s f)_i = -2f_i$ implies $A'_i(0) = 0$.
\end{lemma}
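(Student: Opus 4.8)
The plan is to prove Lemma~\ref{lem:ADer} in two stages: first establish the two equivalent expressions for $A_i'$ via a direct differentiation of the algebraic area formula \eqref{eqn:AlgArea}, and then use the eigenvalue equation $(\triangle_s f)_i = -2f_i$ together with the structural results of Lemma~\ref{lem:WeightsGeom} to conclude that $A_i'(0) = 0$.

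For the first stage, I would differentiate $A_i(\epsilon) = \frac12 \sum_j h_{ij}(\epsilon) L_{ij}(\epsilon)$ by the product rule, obtaining
\[
A_i' = \frac12 \sum_j \bigl( h_{ij}' L_{ij} + h_{ij} L_{ij}' \bigr).
\]
The content of the first claim is that the two ``halves'' coincide, i.e.\ $\sum_j h_{ij}' L_{ij} = \sum_j h_{ij} L_{ij}'$. The natural way to see this is geometric: the quantity $A_i$ is the area of a planar polygon whose boundary consists of the dual edges, and the deformation \eqref{eqn:DeformDualPolyh} is a parallel displacement of the supporting lines $p_i^\ast \cap p_j^\ast$. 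A parallel displacement of the sides of a polygon changes its area, in first order, by $\sum_j h_{ij}' L_{ij}$ (moving each side outward by $h_{ij}'$ sweeps a strip of area $h_{ij}' L_{ij}$); on the other hand the same first variation can be read off by keeping the lines fixed and letting the vertices move, which gives the complementary expression. I would make this precise either by the standard variational formula for the area of a polygon with moving edges, or by noting that both expressions equal the full derivative $A_i'$ because the ``missing'' cross terms cancel in pairs at each vertex $n_{ijk}$ where two adjacent edges meet. The cleanest route is to invoke that $A_i$ is, up to sign conventions, the area enclosed by a closed polygonal curve, and that its first variation under displacement of the bounding lines is symmetric in exactly this way.

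For the second stage, I would relate $A_i'(0)$ to the deformation of the dual \emph{vertices} rather than the faces. Moving the planes as in \eqref{eqn:DeformDualPolyh} induces, in first order, a motion $\dot n_{ijk}$ of each dual vertex; since $n_{ijk}$ lies on $p_i^\ast, p_j^\ast, p_k^\ast$ simultaneously, $\dot n_{ijk}$ is determined by the three velocities $f_i, f_j, f_k$. I would then express $A_i'(0)$ as a sum over the dual edges of $i$ in terms of these vertex velocities and the vectors $n_{ijk} - n_{ijl} = c_{ij}(p_i \times p_j)$ from \eqref{eqn:DualEdgeSpher}. The key algebraic input is Lemma~\ref{lem:WeightsGeom}, especially \eqref{eqn:DualFaceSpher}, which gives $\sum_j c_{ij}(p_j - p_i) = -2d_i p_i$ in the non-singular case. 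The eigenvalue condition $(\triangle_s f)_i = -2f_i$ reads $\sum_j c_{ij}(f_j - f_i) = -2 d_i f_i$, and this is precisely the contraction of the vector identity \eqref{eqn:DualFaceSpher} against a radial quantity built from $f$. I expect $A_i'(0)$ to reduce, after substituting these relations, to a multiple of $\sum_j c_{ij}(f_j - f_i) + 2 d_i f_i$, which vanishes exactly under the hypothesis.

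The main obstacle will be the bookkeeping of signs and of the first-order vertex motions $\dot n_{ijk}$, that is, writing $A_i'(0)$ explicitly in terms of $f_i, f_j, f_k$ in a form that visibly matches the eigenvalue equation. The cleanest path is probably to compute $\dot n_{ijk}$ from the linear system $\langle p_i, n_{ijk}\rangle = 1+\epsilon f_i$ (and the analogues for $j,k$), differentiate at $\epsilon = 0$, and then feed the result into the symmetric expression $A_i' = \sum_j h_{ij} L_{ij}'$ established in the first stage. At that point the identity $\sum_j c_{ij}(p_j - p_i) = -2 d_i p_i$ does the decisive work of collapsing the sum to the eigenvalue combination. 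I would verify in passing that the sign conventions for $L_{ij}$ and the orientation of the dual polygon are consistent with those fixed before the lemma, since a sign slip here is the easiest way to obtain a spurious nonzero answer.
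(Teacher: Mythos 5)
Your first stage coincides with the paper's argument: the paper also combines the first-variation formula $\partial A_i/\partial h_{ij} = L_{ij}$ (the 2D analog of the volume derivatives in Figure \ref{fig:VolPartDer}) with the product-rule differentiation of \eqref{eqn:AlgArea} to get both expressions for $A_i'$. The second stage, however, has a genuine gap: you choose the half $A_i' = \sum_j h_{ij} L_{ij}'$ and propose to evaluate it by computing the vertex velocities $\dot n_{ijk}$ from the linear systems, expecting \eqref{eqn:DualFaceSpher} to collapse the result. Writing $\dot n_{ijk} - \dot n_{ijl} = s_{ij}(p_i \times p_j)$, this half gives $A_i'(0) = \sum_j \tan\frac{\lambda_{ij}}2\, s_{ij}\sin\lambda_{ij} = 2\sum_j s_{ij}\sin^2\frac{\lambda_{ij}}2$, and the whole difficulty is now to relate the coefficients $s_{ij}$ — obtained by solving the $3\times 3$ systems, hence involving $f$ at the four vertices $i,j,k,l$ and dual-basis determinants — to the combination $\sum_j c_{ij}(f_j - f_i) + 2d_i f_i$. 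Equation \eqref{eqn:DualFaceSpher} is an identity for the weights $c_{ij}$, not for the $s_{ij}$, and does not provide this; indeed, in the paper the implication runs the other way around: Lemma \ref{lem:ADer} is exactly what later yields $\sum_j s_{ij}\sin^2\frac{\lambda_{ij}}2 = 0$ in the proof of the second part of Theorem \ref{thm:SphIIDDiscrete}, so assuming this relation can be ``read off'' risks circularity with that argument.

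The working route — and the paper's — uses the \emph{other} half, $A_i' = \sum_j h_{ij}' L_{ij}$, where the derivative $h_{ij}'$ is immediate because $h_{ij}$ is a \emph{linear} function of the support numbers $h_i(\epsilon) = 1 + \epsilon f_i$: concretely $h_{ij}' = \frac{f_j - f_i\cos\lambda_{ij}}{\sin\lambda_{ij}}$, which is equivalent to the 2D analog of \eqref{eqn:DA}, namely $\partial A_i/\partial h_j = L_{ij}/\sin\lambda_{ij}$ and $\partial A_i/\partial h_i = -\sum_j L_{ij}\cot\lambda_{ij}$. Combined with $L_{ij}(0) = c_{ij}\sin\lambda_{ij}$ from \eqref{eqn:DualEdgeSpher} — an ingredient you cite but never deploy — this gives directly
\begin{equation*}
A_i'(0) = \sum_j c_{ij}\left(f_j - f_i\cos\lambda_{ij}\right) = \sum_j c_{ij}(f_j - f_i) + 2 d_i f_i = d_i\bigl(2f_i + (\triangle_s f)_i\bigr),
\end{equation*}
using only $\cos\lambda_{ij} = 1 - 2\sin^2\frac{\lambda_{ij}}2$ and the definition of $d_i$; no appeal to \eqref{eqn:DualFaceSpher} or to the vertex motions $\dot n_{ijk}$ is needed. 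So your target formula and your identification of the eigenvalue combination are correct, but the computational lever you name would not do the decisive work, and the step from your setup to the conclusion is precisely the missing (nontrivial) content.
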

\begin{proof}
The area $A_i$ can be viewed as a function of $h_{ij}$.
As such, it satisfies
\[
\frac{\partial A_i}{\partial h_{ij}} = L_{ij},
\]
which is a 2-dimensional analog of the formulas illustrated in Figure \ref{fig:VolPartDer}.
It follows that
\[
A'_i = \sum_j \frac{\partial A_i}{\partial h_{ij}} h'_{ij} = \sum_j h'_{ij} L_{ij}.
\]
On the other hand, formula \eqref{eqn:AlgArea} implies
\[
A'_i = \frac12 \sum_j h'_{ij} L_{ij} + \frac12 \sum_j h_{ij} L'_{ij}.
\]
Combined with the previous equation it yields $A'_i = \sum_j h_{ij} L'_{ij}$.

Now view $A_i$ as a function of the support numbers $h_i$, which themselves are functions of $\epsilon$:
\[
h_i(\epsilon) = 1 + \epsilon f_i.
\]
One has
\[
A'_i = \frac{\partial A_i}{\partial h_i} f_i + \sum_j \frac{\partial A_i}{\partial h_j} f_j = - f_i \sum_j L_{ij} \cot\lambda_{ij} + \sum_j f_j \frac{L_{ij}}{\sin\lambda_{ij}}.
\]
As $L_{ij}(0) = c_{ij} \sin\lambda_{ij}$, this implies
\[
A'(0) = - f_i \sum_j c_{ij}\cos\lambda_{ij} + \sum_j f_j c_{ij} = d_i (2f_i + (\triangle_s f)_i),
\]
and the lemma follows.
\end{proof}

\begin{proof}[Proof of the second part of Theorem \ref{thm:SphIIDDiscrete}]
Let $f \colon V \to \mathbb{R}$ be such that $\triangle_s f = -2f$.
Consider the $1$-parameter deformation \eqref{eqn:DeformDualPolyh} of the dual polyhedron and put
\[
t_{ijk} = n'_{ijk}.
\]
Due to
\[
\langle p_i, n_{ijk}(\epsilon) \rangle = 1 + \epsilon f_i, \quad \langle p_j, n_{ijk}(\epsilon) \rangle = 1 + \epsilon f_j, \quad \langle p_k, n_{ijk}(\epsilon) \rangle = 1 + \epsilon f_k 
\]
one has
\[
\langle p_i, t_{ijk} \rangle = f_i, \quad \langle p_j, t_{ijk} \rangle = f_j, \quad \langle p_k, t_{ijk} \rangle = f_k. 
\]
It follows that
\[
\langle p_i, t_{ijk} - t_{ijl} \rangle = f_i - f_i = 0, \quad \langle p_j, t_{ijk} - t_{ijl} \rangle = f_j - f_j = 0.
\]
Therefore the vector $t_{ijk} - t_{ijl}$ is parallel to the vector $p_i \times p_j$ and hence there is a map $s \colon E \to \mathbb{R}$ such that
\[
t_{ijk} - t_{ijl} = s_{ij}(p_i \times p_j).
\]
Let us prove that $s$ satisfies \eqref{eqn:Stress}.
The assumption $\triangle_s f = -2f$ implies by Lemma \ref{lem:ADer}
\begin{equation}
\label{eqn:APrime}
0 = A_i'(0) = \sum_j h_{ij}(0) L'_{ij}(0).
\end{equation}
One has $h_{ij}(0) = \|n_{ij} - p_i\| = \tan\frac{\lambda_{ij}}2$.
As $L_{ij}(\epsilon)$ is the signed length of the segment $n_{ijk}(\epsilon)n_{ijl}(\epsilon)$ and
\[
n_{ijk}(\epsilon) - n_{ijl}(\epsilon) = (n_{ijk} + \epsilon t_{ijk}) - (n_{ijl} + \epsilon t_{ijl}) = (c_{ij} + \epsilon s_{ij}) (p_i \times p_j),
\]
one also has $L'_{ij} = s_i \sin\lambda_{ij}$.
Substituting this into \eqref{eqn:APrime} one obtains
\[
\sum_j s_{ij} \sin^2 \frac{\lambda_{ij}}2 = 0.
\]
It follows that
\[
\left\langle p_i, \sum_j s_{ij}(p_j - p_i) \right\rangle = \sum_j s_{ij}(\cos\lambda_{ij} - 1) = 0.
\]
Since one also has
\[
p_i \times \sum_j s_{ij}(p_j - p_i) = \sum_j (t_{ijk} - t_{ijl}) = 0,
\]
one concludes that $\sum_j s_{ij}(p_j - p_i) = 0$.

The closing condition ensures that there is a map $r \colon F \to \mathbb{R}^3$ satisfying the first of the equations \eqref{eqn:RTStress}.
Putting $q_i = r_{ijk} \times p_i + t_{ijk}$ one obtains an infinitesimal isometric deformation of the polyhedral surface $P$ such that $\langle p_i, q_i \rangle = \langle p_i, t_{ijk} \rangle = f_i$, and the second part of Theorem \ref{thm:SphIIDDiscrete} is proved.
\end{proof}

\begin{remark}
Some parts of the above arguments are related to the Maxwell--Cremona correspondence, reciprocal diagrams and parallel redrawings, see \cite{CrapoWhiteley1994}.
The connection between infinitesimal isometric deformations of polyhedra and deformations of their duals preserving the face areas was discovered in \cite{BobenkoIzmestiev2008}.
\end{remark}

\subsection{Infinitesimal conformal deformations}
\label{sec:InfConfDef}
Consider two triangulated spherical surfaces with the same combinatorics $(V, E, F)$ and the edge lengths $\lambda_{ij}$, respectively $\widetilde{\lambda}_{ij}$.
They are called \emph{conformally equivalent} (in the vertex scaling sense) if there is a function $u \colon V \to \mathbb{R}$ such that
\begin{equation}
\label{eqn:DiscConfSph}
\sin \frac{\tilde{\lambda}_{ij}}{2} = e^{\frac{u_i+u_j}{2}}\sin \frac{\lambda_{ij}}{2} 
\end{equation}
for all edges $ij \in E$.
This notion together with its hyperbolic counterpart was introduced in \cite{Bobenko2010} motivated by variational principles for 3-dimensional polyhedra.
A similar formula for triangulated Euclidean surfaces was proposed earlier in \cite{Luo2004}. See \cite{Glickenstein2017, Zhang2014} for further variants.

Let us define infinitesimal conformal deformations in a way compatible with the above formula.

\begin{definition}
An \emph{infinitesimal conformal deformation} (in the vertex scaling sense)  of a triangulated spherical surface with edge lengths $\lambda_{ij}$ is a map $\dot\lambda \colon E \to \mathbb{R}$ such that $\dot\lambda_{ij}$ are the derivatives at $t=0$ of the edge lengths under some discrete conformal deformation.
In other words, the numbers $\dot{\lambda}_{ij}$ must satisfy
\begin{equation}
\label{eqn:InfConf}
\left( \log\sin\frac{\lambda_{ij}}2 \right)^{\boldsymbol{\cdot}} = \frac{u_i + u_j}2 \quad \text{or, equivalently} \quad \dot\lambda_{ij} = (u_i + u_j) \tan\frac{\lambda_{ij}}2
\end{equation}
for some function $u \colon V \to \mathbb{R}$.
The numbers $u_i$ are called \emph{conformal factors} of the infinitesimal conformal deformation $\dot\lambda$.
\end{definition}

We are dealing with deformations where the total angles around all vertices remain $2\pi$.
Therefore let us map our triangulated surface to $\mathbb{S}^2$ in a locally isometric way.
This map is uniquely determined by its restriction $p \colon V \to \mathbb{S}^2$ to the vertices.
Any deformation of our surface in the class of spherical surfaces can be described by a collection of paths $p_i(t)$, $i \in V$.
An infinitesimal deformation is the derivative at $t=0$ of a smooth deformation, that is a collection of tangent vectors $\xi_i \in T_{p_i}\mathbb{S}^2$.
A tangent vector field $\xi$ defined on the vertices of a triangulated spherical surface is called conformal if the induced infinitesimal deformation of the edge lengths is conformal.

\begin{definition}
\label{dfn:DiscConfField}
Let $p \colon V \to \mathbb{S}^2$ be a locally isometric realization of a triangulated spherical surface.
A tangent vector field
\[
\xi \colon V \to T\mathbb{S}^2, \quad \xi_i \in T_{p_i}\mathbb{S}^2
\]
is called \emph{discretely conformal} (in the sense of vertex scaling) with conformal factors $u \colon V \to \mathbb{R}$, if for every smooth deformation $p(t)$ with $p(0) = p$ and $\dot p(0) = \xi$ the derivatives at $t=0$ of the edge lengths satisfy \eqref{eqn:InfConf} for some function $u \colon V \to \mathbb{R}$.
\end{definition}

\begin{lemma}
\label{lem:DiscConf}
A tangent vector field $\xi$ on the vertices of a triangulated spherical surface, locally isometrically mapped to $\mathbb{S}^2$ is discretely conformal with conformal factors $u \colon V \to \mathbb{R}$ if and only if there is a function $u \colon V \to \mathbb{R}$ such that for every edge $ij$ one has
\[
\langle \xi_j - \xi_i, p_j - p_i \rangle = \frac{u_i + u_j}2 \|p_j - p_i\|^2.
\]
\end{lemma}
\begin{proof}
By differentiating the identity $\cos \lambda_{ij} = \langle p_i, p_j \rangle$ one obtains
\[
- \dot\lambda_{ij} \sin\lambda_{ij} = \langle \xi_i, p_j \rangle + \langle p_i, \xi_j \rangle = - \langle \xi_j - \xi_i, p_j - p_i \rangle.
\]
The formula in the lemma follows by substituting the expression for $\dot\lambda_{ij}$ from \eqref{eqn:InfConf} and using the identity $\|p_j - p_i\| = 2\sin\frac{\lambda_{ij}}2$.
\end{proof}

\begin{proof}[Proof of Theorem \ref{thm:ConfFieldDiscreteSph}, first part and of Theorem \ref{thm:SphIIDDiscrete}, third part]
Let $\xi \colon V \to T\mathbb{S}^2$ be a tangent vector field such that the induced derivatives of the edge lengths satisfy \eqref{eqn:InfConf}.
Consider the vector field $q \colon V \to \mathbb{R}^3$ defined as
\[
q_i = -u_i p_i + \xi_i.
\]
Let us show that $q$ is an infinitesimal isometric deformation of the Euclidean triangulated surface with the same vertices and the same edges.
This is done by a short calculation:
\begin{multline*}
\langle q_j - q_i, p_j - p_i \rangle = \langle u_ip_i - u_jp_j, p_j - p_i \rangle + \langle \xi_j - \xi_i, p_j - p_i \rangle\\
= (u_i + u_j)\left( \langle p_i, p_j \rangle - 1 \right) + \frac{u_i + u_j}2 \|p_j - p_i\|^2 = 0,
\end{multline*}
based on $\|p_i\| = \|p_j\| = 1$ and on Lemma \ref{lem:DiscConf}.
By the first part of Theorem \ref{thm:SphIIDDiscrete} proved in the previous section the radial component of $q$ is an eigenfunction of the discrete spherical Laplacian with the eigenvalue $-2$.
This proves that condition 2 in Theorem \ref{thm:ConfFieldDiscreteSph} implies condition 1.

For the proof that condition 1 in Theorem \ref{thm:ConfFieldDiscreteSph} implies condition 2 we use the second part of Theorem \ref{thm:SphIIDDiscrete}.
For a function $u \colon V \to \mathbb{R}$ such that $\triangle_s u = -2u$ there is an infinitesimal isometric deformation $\xi$ whose radial component is equal to $u$.
The above calculation shows that minus the tangential component of $\xi$ satisfies the formula from Lemma \ref{lem:DiscConf} and hence the formula \eqref{eqn:InfConf}.

Now to the third part of Theorem \ref{thm:SphIIDDiscrete}.
Let $\xi$ be the tangential component of an infinitesimal isometric deformation $q$ of a triangulated Euclidean surface inscribed in the unit sphere.
The same calculation as above implies
\[
\langle \xi_j - \xi_i, p_j - p_i \rangle = - \frac{\langle q_i, p_i \rangle + \langle q_j, p_j \rangle}2 \|p_j - p_i\|^2,
\]
which by Lemma \ref{lem:DiscConf} means that $\xi$ is discrete conformal with the conformal factor minus the radial component of $q$.
This finishes the proof.
\end{proof}

Definition of discrete conformal equivalence at the beginning of this section can be reformulated in terms of \emph{length cross ratios} \cite{Bobenko2010}. For any four points $p_1, p_2, p_3, p_4 \in \mathbb{R}^3$ the length cross-ratio is defined as
\[
\operatorname{lcr}(p_1, p_2, p_3, p_4) = \frac{\|p_1 - p_3\| \|p_2 - p_4\|}{\|p_1 - p_4\| \|p_2 - p_3\|}.
\]

\begin{lemma}[\cite{Bobenko2010}]
There is a function $u \colon V \to \mathbb{R}$ such that equation \eqref{eqn:DiscConfSph} holds for all edges $ij \in E$ if and only if for every edge $ij$ one has
\[
\operatorname{lcr}(p_i, p_j, p_k, p_l) = \operatorname{lcr}(\widetilde{p}_i, \widetilde{p}_j, \widetilde{p}_k, \widetilde{p}_l),
\]
where $ijk, ijl \in F$.
\end{lemma}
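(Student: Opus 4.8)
The plan is to rewrite both sides purely in terms of the quantities $s_{ij} := \sin\frac{\lambda_{ij}}2$ and $\widetilde s_{ij} := \sin\frac{\widetilde\lambda_{ij}}2$. Since the vertices $p_i$ lie on the unit sphere, the chordal distance is $\|p_i - p_j\| = 2 s_{ij}$ (the relation already used in Lemma~\ref{lem:DiscConf}), so for two triangles $ijk, ijl \in F$ sharing the edge $ij$ the factors of $2$ cancel and
\[
\operatorname{lcr}(p_i, p_j, p_k, p_l) = \frac{\|p_i - p_k\|\,\|p_j - p_l\|}{\|p_i - p_l\|\,\|p_j - p_k\|} = \frac{s_{ik}\, s_{jl}}{s_{il}\, s_{jk}},
\]
and likewise for the tilded data. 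Note that the shared edge $ij$ does not enter; only the four ``outer'' edges $ik, jl, il, jk$ of the diamond appear.

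For the forward implication I would assume \eqref{eqn:DiscConfSph}, that is $\widetilde s_{ab} = e^{(u_a + u_b)/2} s_{ab}$ for every edge $ab$. Substituting into the expression above, the exponential prefactor of $\operatorname{lcr}(\widetilde p_i, \widetilde p_j, \widetilde p_k, \widetilde p_l)$ is $\exp\!\big(\tfrac12(u_i + u_k + u_j + u_l) - \tfrac12(u_i + u_l + u_j + u_k)\big) = 1$, so the two length cross-ratios coincide. This direction is a one-line cancellation and presents no real difficulty.

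For the converse I would set $\rho_{ij} := \log(\widetilde s_{ij}/s_{ij})$ on each edge (well defined since $0 < \lambda_{ij} < \pi$ forces $s_{ij} > 0$). Taking logarithms, the equality of length cross-ratios across $ij$ becomes the diamond relation $\rho_{ik} + \rho_{jl} = \rho_{il} + \rho_{jk}$. The goal is to produce $u \colon V \to \mathbb{R}$ with $\rho_{ij} = \tfrac12(u_i + u_j)$ on every edge, which is exactly \eqref{eqn:DiscConfSph}. Within a single triangle $ijk$ the three equations $u_i + u_j = 2\rho_{ij}$, $u_j + u_k = 2\rho_{jk}$, $u_k + u_i = 2\rho_{ki}$ have the unique solution $u_i = \rho_{ij} + \rho_{ik} - \rho_{jk}$ (and cyclically). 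I would define $u_i$ by this formula using any triangle of the star of $i$, and then check consistency: for two triangles $ijk$ and $ijl$ sharing $ij$, the two candidate values at $i$ agree precisely when $\rho_{ik} - \rho_{jk} = \rho_{il} - \rho_{jl}$, which is the diamond relation just derived. Since consecutive triangles around a vertex share an edge and the link of each vertex is connected, these exact equalities propagate around the whole star, so $u_i$ is well defined; by construction $u_i + u_j = 2\rho_{ij}$ on every edge.

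The main point, and the only place where the hypothesis is genuinely used, is this gluing step in the converse: verifying that the per-triangle solutions agree on shared vertices. Because each gluing is an \emph{exact} equality rather than an equality up to an additive constant, no global simple-connectedness assumption is needed beyond connectedness of the surface and of each vertex star, which is what makes the argument close up cleanly.
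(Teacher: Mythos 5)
Your proof is correct and rests on exactly the same key observation as the paper's: rewriting the length cross-ratio via $\|p_a - p_b\| = 2\sin\frac{\lambda_{ab}}2$, so that $\operatorname{lcr}(p_i,p_j,p_k,p_l) = \dfrac{\sin\frac{\lambda_{ik}}2 \sin\frac{\lambda_{jl}}2}{\sin\frac{\lambda_{il}}2 \sin\frac{\lambda_{jk}}2}$, which is the one step the paper states explicitly before delegating the rest to \cite[Theorem 2.4]{Lam2015a}. Your completion of the converse --- the per-triangle solution $u_i = \rho_{ij} + \rho_{ik} - \rho_{jk}$ glued across shared edges via the diamond relation $\rho_{ik} + \rho_{jl} = \rho_{il} + \rho_{jk}$, with the correct remark that the equalities are exact so only connectedness of each vertex link is needed --- is a sound filling-in of the details the paper cites rather than proves.
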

The proof in \cite{Bobenko2010} is based on the observation that for $p_1, p_2, p_3, p_4 \in \mathbb{S}^2$ one has
\[
\operatorname{lcr}(p_1, p_2, p_3, p_4) = \frac{\sin \frac{\lambda_{13}}{2} \sin \frac{\lambda_{24}}{2}}{\sin \frac{\lambda_{14}}{2} \sin \frac{\lambda_{23}}{2}}
\]
and can be found e.g. in \cite[Theorem 2.4]{Lam2015a}. The length cross ratios are invariant under M\"{o}bius transformations.

For four points $p_1, p_2, p_3, p_4 \in \mathbb{S}^2$ lying on a sphere,  one can further define a complex-valued cross-ratio as
\[
\operatorname{cr}(p_1, p_2, p_3, p_4) = \operatorname{cr}(z_1, z_2, z_3, z_4) = \frac{(z_1 - z_3)(z_2 - z_4)}{(z_1 - z_4)(z_2 - z_3)},
\]
where $z_i \in \mathbb{C}$ is the image of $p_i$ under a stereographical projection of $\mathbb{S}^2$ to $\mathbb{R}^2 \cup \{\infty\}$ identified with $\mathbb{C} \cup \{\infty\}$.

\begin{lemma}
The complex cross-ratio of four points on the sphere is well-defined in the sense that it does not depend on the choice of the pole of the stereographic projection.
\end{lemma}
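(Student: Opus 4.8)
The plan is to reduce the statement to the Möbius invariance of the cross-ratio. I would first record the relevant structure: the round sphere $\mathbb{S}^2$ carries a natural complex structure $J$, given by rotation through $90^\circ$ in each tangent plane using the induced metric and the outward orientation, which turns $\mathbb{S}^2$ into the Riemann sphere $\mathbb{CP}^1$. Since $J$ is defined purely in terms of the metric and the orientation, it is invariant under the rotation group $SO(3)$.

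Next I would argue that stereographic projection from any pole, with the image plane oriented so that the projection is orientation-preserving, is a biholomorphism $(\mathbb{S}^2, J) \to \mathbb{C} \cup \{\infty\}$. Stereographic projection is classically conformal, so once its target is given the matching orientation it is holomorphic with respect to $J$. Moreover, if $N'$ is a second pole and $R \in SO(3)$ is a rotation carrying one pole to the other, then the projection from $N'$ differs from the projection from $N$ by precomposition with $R^{-1}$ and postcomposition with an orientation-preserving conformal self-map of the plane; as rotations are orientation-preserving isometries of $\mathbb{S}^2$, they act as holomorphic automorphisms of $\mathbb{CP}^1$, and hence the projection from $N'$ is again holomorphic.

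Granting this, let $\sigma_1, \sigma_2$ be the stereographic projections from two poles and set $z_i = \sigma_1(p_i)$, $w_i = \sigma_2(p_i)$. The transition map $\tau = \sigma_2 \circ \sigma_1^{-1}$ is then a holomorphic bijection of $\mathbb{C} \cup \{\infty\} = \mathbb{CP}^1$, hence a Möbius transformation $\tau(z) = \frac{az + b}{cz + d}$. Since Möbius transformations leave the cross-ratio unchanged, one gets $\operatorname{cr}(z_1, z_2, z_3, z_4) = \operatorname{cr}(\tau z_1, \tau z_2, \tau z_3, \tau z_4) = \operatorname{cr}(w_1, w_2, w_3, w_4)$, which is exactly the assertion that the complex cross-ratio is independent of the pole.

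The step I expect to be the crux is the orientation bookkeeping of the second paragraph: one must ensure both projections are normalized by the \emph{same} orientation convention, so that $\tau$ is holomorphic rather than anti-holomorphic. Were it anti-holomorphic, one would obtain $\overline{\operatorname{cr}}$ in place of $\operatorname{cr}$, and the argument of the cross-ratio would flip sign (its modulus, the length cross-ratio of the preceding lemma, is intrinsic in any case). Concretely, for two antipodal poles the transition is $z \mapsto 1/z$ under a consistent orientation and $z \mapsto 1/\bar z$ otherwise, which already pinpoints exactly where the orientation convention enters.
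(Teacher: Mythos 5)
Your proof is correct, and it is essentially a fully worked-out version of the one-sentence alternative that the paper itself appends to its proof (``two stereographic projections from different poles differ by a M\"obius transformation, which preserves the cross-ratio''). The paper's \emph{primary} argument is structured differently: it splits $\operatorname{cr}$ into modulus and argument, identifies the modulus with the length cross-ratio (invariant under inversion --- and stereographic projection is a restriction of an inversion --- by a similar-triangles argument), and identifies the argument with the oriented intersection angle of the circumcircles of $z_1z_2z_3$ and $z_1z_2z_4$, which is preserved because stereographic projection maps circles to circles and preserves oriented angles. That decomposition is not just a proof device: it gives the geometric meaning of $|X_{ij}|$ as the length cross-ratio and of $\arg X_{ij}$ as the circle intersection angle, which the paper uses immediately afterwards in defining $X$ and in proving the second part of Theorem \ref{thm:ConfFieldDiscreteSph} via $\Re\log X$ and $\Im\log X$. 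Your route buys conceptual economy (one invocation of the classification of biholomorphisms of $\mathbb{CP}^1$) and, to its credit, makes explicit the one genuinely delicate point that the paper glosses over: the orientation normalization. The paper's appeal to ``oriented angles'' tacitly requires the same convention you spell out, since with mismatched orientations one obtains $\overline{\operatorname{cr}}$ rather than $\operatorname{cr}$ --- your $z \mapsto 1/z$ versus $z \mapsto 1/\bar z$ example pinpoints this correctly. One minor streamlining: the detour through a rotation $R \in SO(3)$ carrying one pole to the other is unnecessary once you know that each correctly oriented projection is a biholomorphism onto $\mathbb{CP}^1$; the transition map is then automatically a biholomorphic self-map, hence M\"obius.
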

\begin{proof}
The absolute value of the complex cross-ratio is the length cross-ratio.
The stereographic projection is a restriction of the inversion in a sphere, and it can be shown by considering pairs of similar triangles that the length cross-ratio is invariant under inversion.

The argument of the complex cross-ratio is the angle between oriented circumcircles of triangles $z_1z_2z_3$ and $z_1z_2z_4$ .
At the same time, the stereographic projection maps circles to circles and preserves oriented angles.
Therefore the argument of $\operatorname{cr}(p_1, p_2, p_3, p_4)$ is also the angle between oriented circumcircles.
Thus both the absolute value and the argument of $\operatorname{cr}(p_1, p_2, p_3, p_4)$ are well-defined.

Alternatively one may use the observation that two stereographic projections from different poles differ by a M\"{o}bius transformation, which preserves the cross-ratio.
\end{proof}

Given a triangulation of a spherical domain and a stereographic projection, we could use the complex cross-ratios to determine the positions of the vertices up to a M\"obius transformation of the sphere. Following \cite{lam2024sym}, we define $X:E \to \mathbb{C}$ such that for every oriented edge $ij$
\[
X_{ij}:= -\operatorname{cr}(z_i,z_j,z_k,z_l)
\]
where $z_i z_j z_k$ is the left triangle and $z_i z_j z_l$ is on the right. It turns out $X_{ij}=X_{ji}$ and hence $X$ is well-defined on unoriented edges. Observe that the absolute value $|X_{ij}| =\operatorname{lcr}(p_i, p_j, p_k, p_l) $ is the length cross ratio while the argument $\arg X_{ij}$ is the intersection angle of neighboring circumcircles.

The second part of Theorem~\ref{thm:ConfFieldDiscreteSph} follows from the following proposition on the equivalence of discrete conformal structures under infinitesimal deformation.

\begin{proposition}
	\label{prop:conformal-equivalence}
	Let $\xi : V \to T\mathbb{S}^2$ be an infinitesimal conformal deformation in the sense of vertex scaling. Then the rotated field $\eta := J\xi$, where $J$ is a rotation by $\pi/2$ in each tangent plane, preserves the intersection angles of adjacent circumcircles infinitesimally. Conversely, any infinitesimal deformation $\eta$ preserving intersection angles arises in this way.
\end{proposition}

\begin{proof}
	Let $\phi \colon \mathbb{S}^2 \to \mathbb{C} \cup \{\infty\}$ be a stereographic projection.
	Denote $z_i = \phi(p_i)$ and $\upxi_i = d\phi(\xi_i)$.
	Then $\xi$ is a discretely conformal vector field in the sense of Definition~\ref{dfn:DiscConfField} if and only if the length cross-ratios $|X|$ are preserved infinitesimally, which is equivalent to the vanishing of the real part of the derivative of $\log X_{ij}$ for each $ij \in E$:
	\[
	0 = \left( \Re \log X_{ij} \right)^{\boldsymbol{\cdot}} = \Re\left( \frac{\upxi_k - \upxi_i}{z_k - z_i} - \frac{\upxi_i - \upxi_l}{z_i - z_l} + \frac{\upxi_l - \upxi_j}{z_l - z_j} - \frac{\upxi_j - \upxi_k}{z_j - z_k} \right).
	\]
	On the other hand, for $\upeta_i = d\phi(\eta_i)$, the vector field $\eta$ preserves the intersection angles $\arg X$ of adjacent circumcircles in the infinitesimal sense if and only if
	\[
	0 = \left( \Im \log X_{ij} \right)^{\boldsymbol{\cdot}} = \Im\left( \frac{\upeta_k - \upeta_i}{z_k - z_i} - \frac{\upeta_i - \upeta_l}{z_i - z_l} + \frac{\upeta_l - \upeta_j}{z_l - z_j} - \frac{\upeta_j - \upeta_k}{z_j - z_k} \right).
	\]
	Therefore, infinitesimal conformal deformations in the sense of vertex scaling are related to those preserving intersection angles via a rotation by $\pi/2$ in the tangent plane at each vertex.
\end{proof}

This proof basically repeats an argument from \cite{Lam2015a} in the Euclidean case, where no stereographic projection was needed.

\section{Discrete hyperbolic Laplacian}	
\label{sec:HypLap}
\subsection{Geometric meaning of the edge and vertex weights}
Let $\mathbb{R}^{2,1}$ be the Minkowski space, that is $\mathbb{R}^3$ equipped with the inner product
\[
\langle x, y \rangle_{2,1} = -x_0y_0 + x_1y_1 + x_2y_2 \quad \text{for} \quad x = (x_0, x_1, x_2),\, y = (y_0, y_1, y_2).
\]
A non-zero vector $x$ is said to be spacelike if $\langle x, x \rangle_{2,1} > 0$, timelike if $\langle x, x \rangle_{2,1} < 0$, and lightlike if $\langle x, x \rangle_{2,1} = 0$.
A one-dimensional linear subspace of $\mathbb{R}^{2,1}$ is said to be spacelike, timelike or lightlike if it spanned by a vector of the respective type.
A two-dimensional linear subspace of $\mathbb{R}^{2,1}$ is said to be spacelike, timelike or lightlike if the restriction of the Minkowski inner product has signature $(+,+)$, $(+,-)$, $(+,0)$, respectively.

Define the Minkowski vector product as
\[
\begin{pmatrix} x_0\\ x_1\\ x_2 \end{pmatrix} \times
\begin{pmatrix} y_0\\ y_1\\ y_2 \end{pmatrix} =
\begin{pmatrix}
-x_1y_2 + x_2y_1\\
x_2y_0 - x_0y_2\\
x_0y_1 - x_1y_0
\end{pmatrix}.
\]
It is easy to see that
\[
\langle x \times y, z \rangle_{2,1} = \det(x, y, z).
\]
In particular, one has
\[
\langle x \times y, x \rangle_{2,1} = 0 \quad \text{and} \quad \det(x, y, x \times y) = \langle x \times y, x \times y \rangle_{2,1},
\]
so that if $x$ and $y$ are timelike, then $x \times y$ is orthogonal to both of them (hence spacelike) and $(x, y, x \times y)$ is a positively oriented basis of $\mathbb{R}^{2,1}$.

The upper half of the one-sheeted hyperboloid in the Minkowski space serves as a model for the hyperbolic plane:
\[
\mathbb{H}^2 = \{x \in \mathbb{R}^{2,1} \mid \langle x, x \rangle_{2,1} = -1,\, x_0 > 0\}.
\]
The distance $\lambda_{12}$ between two points $p_1, p_2 \in \mathbb{H}^2$ is defined as
\[
\langle p_1, p_2 \rangle_{2,1} = -\cosh \lambda_{12}.
\]
The intersection of $\mathbb{H}^2$ with a timelike two-dimensional linear subspace is a geodesic, that is the image of an isometric embedding of $\mathbb{R}$.
In this respect $\mathbb{H}^2$ strongly resembles $\mathbb{S}^2$, where the inner product is related to the distance in a similar way, and great circles are (local) geodesics.
The angle between two curves on $\mathbb{H}^2$ is defined as the angle between their tangent vectors, which makes sense due to the fact that the tangent plane at every point of $\mathbb{H}^2$ is spacelike, thus isomorphic to $\mathbb{R}^2$ with the Euclidean inner product.

The circle with center $p \in \mathbb{H}^2$ and radius $r$ is described by the equation
\[
\langle p, x \rangle_{2,1} = -\cosh r.
\]
This is the intersection of $\mathbb{H}^2$ with a spacelike affine plane whose ``unit'' normal is $p$.
Intersections of $\mathbb{H}^2$ with timelike affine planes are called hypercycles.
A hypercycle can be described by an equation
\[
\langle p, x \rangle_{2,1} = \mathrm{const}, \quad \langle p, p \rangle_{2,1} = 1.
\]
The point $p$ (in the so-called anti de Sitter plane $\{p \in \mathbb{R}^{2,1} \mid \langle p, p \rangle_{2,1} = 1\}$) is called the center of the hypercycle.
Non-empty intersections of $\mathbb{H}^2$ with lightlike affine planes are called horocycles, their centers are lightlike vectors.
Every triangle $p_1p_2p_3$ in $\mathbb{H}^2$ has a unique \emph{circumcycle}, that is a circle or a horocycle or a hypercycle passing through $p_1, p_2, p_3$.
The center $o \in \mathbb{R}^{2,1}$ of the circumcycle satisfies
\[
\langle p_1, o \rangle_{2,1} = \langle p_2, o \rangle_{2,1} = \langle p_3, o \rangle_{2,1}.
\]

Let now $(V, E, F)$ be a triangulated surface without boundary where every triangle is equipped with a hyperbolic metric.
For every edge $ij \in E$ the union of adjacent triangles $ijk$ and $ijl$ can be mapped isometrically to $\mathbb{H}^2$.
Then an analog of Lemma \ref{lem:SpherWeightsPos} holds, with the only difference that the circumcircles are to be replaced with circumcycles.
The centers of circumcycles may be ideal (lightlike vectors) or hyperideal (spacelike vectors), which however has no influence on the computation of angles $\angle(\wideparen{p_ip_j}, \wideparen{p_io_{ijk}})$ and the like.

Similarly to the spherical case one constructs the dual polyhedron in the Minkowski space circumscribed about the hyperboloid.
An analog of Lemma \ref{lem:WeightsGeom} holds, with the only difference that in \eqref{eqn:DualFaceSpher} a sign change in one of the terms occurs:
\[
\sum_j c_{ij}p_j = -\left( 2d_i + \sum_j c_{ij} \right) p_i.
\]

\subsection{Infinitesimal isometric deformations of polyhedra inscribed in a two-sheeted hyperboloid}
Similarly to Section \ref{sec:InfDef}, an infinitesimal isometric deformation of a realization $p \colon V \to \mathbb{R}^{2,1}$ of a graph $(V, E)$ in the Minkowski space is a map $q \colon V \to \mathbb{R}^{2,1}$ such that for every edge $ij$ one has
\[
\langle p_j - p_i, q_j - q_i \rangle_{2,1} = 0,
\]
which is equivalent to
\[
\left. \frac{d}{dt} \right|_{t=0} \langle p_j(t) - p_i(t), p_j(t) - p_i(t) \rangle_{2,1} = 0
\]
for any one-parameter family of realizations $p(t)$ such that $p(0) = p$ and $p'(0) = q$.
Trivial infinitesimal isometric deformations, rotation and translation fields are defined similarly to the Euclidean case, and an analog of Lemma \ref{lem:RotTranslProp} holds.
This allows to apply the arguments from Section \ref{sec:InfDef} to prove the first two parts of the following Theorem.

\begin{theorem}
\label{thm:HypIIDDiscretePrime}
Let $P$ be a triangulated Euclidean polyhedral surface with vertices indexed by the set $V = \{i, j, \ldots\}$, all vertices $p_i$ lying on the upper half of the one-sheeted hyperboloid viewed as a ``sphere'' in the Minkowski space:
\[
\mathbb{H}^2 = \{x \in \mathbb{R}^{2,1} \mid \langle x, x \rangle_{2,1} = -1\}
\]
and such that the projection of $P$ to $\mathbb{H}^2$ from the origin is injective.
Let $q \colon V \to \mathbb{R}^{2,1}$ be an infinitesimal isometric deformation of $P$ with respect to the Minkowski norm squared.
Decompose $q$ into radial and tangent components:
\[
q_i = f_i p_i + \xi_i, \quad f \colon V \to \mathbb{R}, \quad \xi \colon V \to \mathbb{R}^{2,1}, \quad \langle \xi_i, p_i \rangle_{2,1} = 0.
\]
and denote by $P_h$ the triangulated hyperbolic surface obtained by centrally projecting the edges of $P$.
Then the following holds:
\begin{enumerate}
\item
The radial component is an eigenfunction of the discrete hyperbolic Laplacian on $P_h$ with the eigenvalue $2$:
\[
\triangle_h f = 2f.
\]
\item
Conversely, if $P$ is simply connected, then for every $2$-eigenfunction $f$ of the discrete hyperbolic Laplacian on $P_h$ there is an infinitesimal isometric deformation of $P$ with the radial component $f$.
\item
The vector field $\xi$ is an infinitesimal conformal deformation of $P_h$ with the conformal factor $-f$.
\end{enumerate}
\end{theorem}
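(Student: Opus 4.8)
The plan is to carry over the arguments of Section~\ref{sec:InfDef} almost word for word, replacing the Euclidean inner product by $\langle\cdot,\cdot\rangle_{2,1}$, the unit sphere by $\mathbb{H}^2$, and the Euclidean cross product by the Minkowski one. All the ingredients are the hyperbolic analogues announced in the preceding subsection: the dual-edge relation $n_{ijk}-n_{ijl}=c_{ij}\,(p_i\times p_j)$, the collinearity of $\sum_j c_{ij}p_j$ with $p_i$, and the Minkowski version of Lemma~\ref{lem:RotTranslProp}, which produces a self-stress $s\colon E\to\mathbb{R}$ and rotation/translation fields $r,t\colon F\to\mathbb{R}^{2,1}$ satisfying the Minkowski forms of \eqref{eqn:RTStress} and \eqref{eqn:Stress}. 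The single structural change responsible for turning the eigenvalue $-2$ into $+2$ is $\langle p_i,p_i\rangle_{2,1}=-1$: combining collinearity with $\langle p_i,p_j\rangle_{2,1}=-\cosh\lambda_{ij}=-(1+2\sinh^2\tfrac{\lambda_{ij}}2)$ gives $\sum_j c_{ij}p_j=(2d_i+\sum_j c_{ij})p_i$, hence $\sum_j c_{ij}(p_j-p_i)=2d_ip_i$, with the opposite sign to the spherical identity \eqref{eqn:DualFaceSpher}.

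For part~1 I set $f_i=\langle p_i,q_i\rangle_{2,1}$ and split $f_j-f_i=\langle p_j,q_j-q_i\rangle_{2,1}+\langle q_i,p_j-p_i\rangle_{2,1}$, so that $(\triangle_h f)_i$ decomposes into two sums as in \eqref{eqn:LaplParts}. Writing $q_j-q_i=r_{ijk}\times(p_j-p_i)$ and using $\langle x\times y,z\rangle_{2,1}=\det(x,y,z)$ gives $\langle p_j,q_j-q_i\rangle_{2,1}=-\langle p_i\times p_j,r_{ijk}\rangle_{2,1}$. Inserting the dual-edge relation and telescoping over the triangles around $i$ converts the first sum into $-\sum_j\langle n_{ijk},r_{ijk}-r_{ijl}\rangle_{2,1}$, which vanishes because $r_{ijk}-r_{ijl}=s_{ij}(p_j-p_i)$ while $\langle p_i,n_{ijk}\rangle_{2,1}=\langle p_j,n_{ijk}\rangle_{2,1}$ forces $\langle n_{ijk},p_j-p_i\rangle_{2,1}=0$. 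The second sum equals $\langle q_i/d_i,\,2d_ip_i\rangle_{2,1}=2f_i$ by the sign-corrected dual-face identity above, so $\triangle_h f=2f$.

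For part~2 I reverse the construction. Deform the circumscribed Minkowski polyhedron by moving its faces in the tangent planes $\langle p_i,x\rangle_{2,1}=\mathrm{const}$ with velocities prescribed by $f$, normalised so that the dual vertices $t_{ijk}:=n'_{ijk}$ satisfy $\langle p_i,t_{ijk}\rangle_{2,1}=f_i$. Then $\langle p_i,t_{ijk}-t_{ijl}\rangle_{2,1}=0=\langle p_j,t_{ijk}-t_{ijl}\rangle_{2,1}$ forces $t_{ijk}-t_{ijl}=s_{ij}(p_i\times p_j)$ for some $s\colon E\to\mathbb{R}$. To verify the closing condition \eqref{eqn:Stress} I use the Minkowski analogue of Lemma~\ref{lem:ADer}: the dual faces lie in spacelike planes and are therefore honest Euclidean polygons, so $\partial A_i/\partial h_{ij}=L_{ij}$ persists and the Schläfli-type computation yields $A_i'(0)=d_i\big((\triangle_h f)_i-2f_i\big)$, which vanishes exactly when $\triangle_h f=2f$; this forces $\sum_j s_{ij}\sinh^2\tfrac{\lambda_{ij}}2=0$ and, together with $p_i\times\sum_j s_{ij}(p_j-p_i)=0$, gives \eqref{eqn:Stress}. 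Integrating $s$ to a rotation field over the simply connected $P$ and setting $q_i=r_{ijk}\times p_i+t_{ijk}$ produces an isometric deformation with $\langle p_i,q_i\rangle_{2,1}=f_i$.

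For part~3 I invoke the Minkowski version of Lemma~\ref{lem:DiscConf}: with conformality now measured by $\log\sinh\tfrac{\lambda_{ij}}2$, a field $\xi$ is discretely conformal with factor $u$ iff $\langle\xi_j-\xi_i,p_j-p_i\rangle_{2,1}=\tfrac{u_i+u_j}2\langle p_j-p_i,p_j-p_i\rangle_{2,1}$, using $\langle p_j-p_i,p_j-p_i\rangle_{2,1}=2(\cosh\lambda_{ij}-1)=4\sinh^2\tfrac{\lambda_{ij}}2$. Expanding the isometry condition $\langle q_j-q_i,p_j-p_i\rangle_{2,1}=0$ with $q_i=f_ip_i+\xi_i$ and $\langle p_i,p_j\rangle_{2,1}=-\cosh\lambda_{ij}$ gives $\langle\xi_j-\xi_i,p_j-p_i\rangle_{2,1}=-\tfrac{f_i+f_j}2\langle p_j-p_i,p_j-p_i\rangle_{2,1}$, identifying the conformal factor as $-f$. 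The main obstacle I anticipate is part~2: one must define the algebraic (signed) area of the dual polyhedron in Lorentzian signature and keep every sign consistent through the deformation, so that the accumulated signs deliver the eigenvalue $+2$ and the factor $-f$ rather than their negatives.
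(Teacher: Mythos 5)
Your proposal is correct and takes essentially the same approach as the paper, whose own proof of this theorem consists precisely of transferring the arguments of Section~\ref{sec:InfDef} (and, for part~3, the spherical conformal computation of Section~\ref{sec:InfConfDef}) to Minkowski signature via the announced analogs of Lemmas \ref{lem:WeightsGeom}, \ref{lem:RotTranslProp}, \ref{lem:ADer} and \ref{lem:DiscConf}. Two bookkeeping remarks: your identity $\sum_j c_{ij}p_j=\left(2d_i+\sum_j c_{ij}\right)p_i$, equivalently $\sum_j c_{ij}(p_j-p_i)=2d_ip_i$, is the correct one --- it is forced by $\langle p_i,p_i\rangle_{2,1}=-1$ and $\langle p_i,p_j\rangle_{2,1}=-\cosh\lambda_{ij}$ and is exactly what produces the eigenvalue $+2$, so the sign in the paper's displayed formula $\sum_j c_{ij}p_j=-\left(2d_i+\sum_j c_{ij}\right)p_i$ is a typo that your derivation silently corrects --- while your own minor sign slips (the theorem's decomposition gives $f_i=-\langle p_i,q_i\rangle_{2,1}$ rather than $f_i=\langle p_i,q_i\rangle_{2,1}$, and the area derivative comes out as $A_i'(0)=d_i\left(2f_i-(\triangle_h f)_i\right)$ rather than its negative) are harmless, since the eigenvalue equation and the vanishing condition are invariant under $f\mapsto-f$.
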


There is a simple trick that relates infinitesimal isometric deformations of a surface in $\mathbb{R}^{2,1}$ to infinitesimal isometric deformations of the same surface in $\mathbb{R}^3$.

\begin{lemma}
\label{lem:PogMap}
Let $p \colon V \to \mathbb{R}^3$ be a realization of a graph $(V, E)$, and let $q \colon V \to \mathbb{R}^3$ be a vector field on its vertices.
Put $\overline{q} = c \circ q$, where
\[
c \colon \mathbb{R}^3 \to \mathbb{R}^3, \quad c(x_0, x_1, x_2) = (-x_0, x_1, x_2).
\]
Then $q$ is an infinitesimal isometric deformation of $p$ with respect to the Euclidean metric if and only if $\overline{q}$ is an infinitesimal isometric deformation of $p$ with respect to the Minkowski norm squared.
\end{lemma}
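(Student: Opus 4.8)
The plan is to reduce the stated equivalence to a single algebraic identity relating the Euclidean and Minkowski bilinear forms through the reflection $c$. Both notions of infinitesimal isometric deformation are imposed edge by edge, so it suffices to show that for each edge $ij$ the Euclidean constraint on $q$ and the Minkowski constraint on $\overline{q}$ are literally the same equation.

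First I would use that $c$ is linear, so that it commutes with taking differences: $\overline{q}_j - \overline{q}_i = c(q_j) - c(q_i) = c(q_j - q_i)$. Hence the Minkowski isometric condition for $\overline{q}$ on the edge $ij$ reads
\[
\langle p_j - p_i, c(q_j - q_i) \rangle_{2,1} = 0.
\]

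The key step is the identity
\[
\langle x, c(y) \rangle_{2,1} = \langle x, y \rangle \qquad \text{for all } x, y \in \mathbb{R}^3,
\]
the left-hand side being the Minkowski product and the right-hand side the Euclidean one. Writing $x = (x_0, x_1, x_2)$ and $y = (y_0, y_1, y_2)$, the sign flip produced by $c$ in the first coordinate cancels the minus sign in the $-x_0 y_0$ term of the Minkowski form, leaving $x_0 y_0 + x_1 y_1 + x_2 y_2$. Equivalently, $c$ is the involution with matrix $\mathrm{diag}(-1,1,1)$, which is exactly the Gram matrix of $\langle \cdot, \cdot \rangle_{2,1}$ relative to the Euclidean structure, so composing it into the Minkowski form returns the Euclidean form.

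Applying this identity with $x = p_j - p_i$ and $y = q_j - q_i$ shows that the displayed Minkowski condition is the Euclidean condition $\langle p_j - p_i, q_j - q_i \rangle = 0$ on the same edge, and conversely. Running over all edges yields the equivalence. There is no genuine obstacle here; the only point to watch is the linearity of $c$, which is what guarantees that the reflection passes through edge differences and thereby makes the edgewise reduction legitimate.
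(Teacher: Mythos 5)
Your proof is correct and is essentially the paper's argument: the paper also reduces everything to the single identity $\langle p_j - p_i, q_j - q_i \rangle = \langle p_j - p_i, \overline{q}_j - \overline{q}_i \rangle_{2,1}$, which is exactly your key identity $\langle x, c(y)\rangle_{2,1} = \langle x, y\rangle$ applied edgewise via the linearity of $c$. You have merely spelled out the details the paper calls ``straightforward.''
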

\begin{proof}
This is straightforward from
\[
\langle p_j - p_i, q_j - q_i \rangle = \langle p_j - p_i, \overline{q}_j - \overline{q}_i \rangle_{2,1}.
\]
\end{proof}

This was noted in \cite{Galeeva82} and is the simplest instance of a more general construction called infinitesimal Pogorelov maps, see \cite{Fillastre2019}.

Via Lemma \ref{lem:PogMap}, we relate infinitesimal isometric deformations with respect to the Minkowski metric in Theorem \ref{thm:HypIIDDiscretePrime} into those with respect to the Euclidean metric in Theorem \ref{thm:HypIIDDiscrete}.

\begin{proof}[Proof of the equivalence Theorem \ref{thm:HypIIDDiscretePrime} $\Leftrightarrow$ Theorem \ref{thm:HypIIDDiscrete}]
	
Lemma \ref{lem:PogMap} implies that an infinitesimal deformation $q$ is isometric with respect to the Euclidean metric if and only if $\overline{q}$ is isometric with respect to the Minkowski metric. It remains to show that the radial and the tangent components have the required forms.

We write  $\overline{q}_i = -f_i p_i + \xi_i$ with $\langle \xi_i, p_i \rangle_{2,1} = 0$. It yields
\[
\langle p_i, \overline{q}_i \rangle_{2,1} = -f_i \langle p_i, p_i \rangle_{2,1} = f_i \quad \Rightarrow \quad f_i =  \langle p_i, \overline{q}_i \rangle_{2,1} =  \langle p_i, q_i \rangle
\]
and hence
\[
\xi_i = \overline{q}_i + f_i p_i = \overline{q}_i + \langle p_i, q_i \rangle p_i.
\]	
is tangential to the hyperboloid. Thus, we obtain the equivalence of Theorems \ref{thm:HypIIDDiscrete} and \ref{thm:HypIIDDiscretePrime}.

\end{proof}

\subsection{Infinitesimal conformal deformations}
Similarly to Section \ref{sec:InfConfDef}, two triangulated hyperbolic surfaces with the same combinatorics and the edge lengths $\lambda_{ij}$, respectively $\widetilde{\lambda}_{ij}$, are called conformally equivalent if there is a function $u \colon V \to \mathbb{R}$ such that
\[
\sinh \frac{\widetilde{\lambda}_{ij}}2 = e^{\frac{u_i+u_j}2} \sinh \frac{\lambda_{ij}}2
\]
for all edges $ij \in E$.
An infinitesimal conformal deformation of a surface with edge lengths $\lambda_{ij}$ is a map $\dot\lambda \colon E \to \mathbb{R}$ such that
\[
\left( \log\sinh\frac{\lambda_{ij}}2 \right)^{\boldsymbol{\cdot}} = \frac{u_i + u_j}2 \quad \text{or, equivalently} \quad
\dot\lambda_{ij} = (u_i + u_j) \tanh \frac{\lambda_{ij}}2
\]
for some function $u$.
A discrete conformal vector field is a tangent vector field defined on the vertices of a triangulated surface such that the corresponding edge length derivatives satisfy the above condition.

An analog of Lemma \ref{lem:DiscConf} holds and can be proved along the same lines.

\begin{lemma}
A tangent vector field $\xi$ on the vertices of a triangulated hyperbolic surface, locally isometrically mapped to $\mathbb{H}^2$, is discretely conformal if and only if there is a function $u \colon V \to \mathbb{R}$ such that for every edge $ij$ one has
\[
\langle \xi_j - \xi_i, p_j - p_i \rangle_{2,1} = \frac{u_i + u_j}2 \langle p_j - p_i, p_j - p_i \rangle_{2,1}.
\]
\end{lemma}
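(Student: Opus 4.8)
The plan is to mirror the proof of Lemma \ref{lem:DiscConf} line by line, replacing each trigonometric relation by its hyperbolic counterpart and the Euclidean inner product by the Minkowski one. The starting point is the distance relation $\langle p_i, p_j \rangle_{2,1} = -\cosh\lambda_{ij}$ recalled in the preceding subsection. First I would differentiate this identity along the deformation, using $\dot p_i = \xi_i$, to obtain
\[
\langle \xi_i, p_j \rangle_{2,1} + \langle p_i, \xi_j \rangle_{2,1} = -\dot\lambda_{ij}\sinh\lambda_{ij}.
\]

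Next I would rewrite the left-hand quantity of interest by expanding the Minkowski bilinear form and invoking the tangency conditions $\langle \xi_i, p_i \rangle_{2,1} = \langle \xi_j, p_j \rangle_{2,1} = 0$, which hold because $\xi$ is tangent to $\mathbb{H}^2$. This gives
\[
\langle \xi_j - \xi_i, p_j - p_i \rangle_{2,1} = -\left(\langle \xi_i, p_j \rangle_{2,1} + \langle p_i, \xi_j \rangle_{2,1}\right) = \dot\lambda_{ij}\sinh\lambda_{ij}.
\]
In parallel, I would compute the Minkowski norm squared of the chord, using $\langle p_i, p_i \rangle_{2,1} = \langle p_j, p_j \rangle_{2,1} = -1$:
\[
\langle p_j - p_i, p_j - p_i \rangle_{2,1} = 2(\cosh\lambda_{ij} - 1) = 4\sinh^2\frac{\lambda_{ij}}2,
\]
the hyperbolic analog of $\|p_j - p_i\|^2 = 4\sin^2\frac{\lambda_{ij}}2$ used in the spherical case.

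Finally I would substitute the hyperbolic conformality condition $\dot\lambda_{ij} = (u_i + u_j)\tanh\frac{\lambda_{ij}}2$ into the expression $\dot\lambda_{ij}\sinh\lambda_{ij}$, and simplify via $\sinh\lambda_{ij} = 2\sinh\frac{\lambda_{ij}}2\cosh\frac{\lambda_{ij}}2$ together with $\tanh\frac{\lambda_{ij}}2 = \sinh\frac{\lambda_{ij}}2 / \cosh\frac{\lambda_{ij}}2$. The two hyperbolic cosines cancel, leaving $(u_i + u_j)\cdot 2\sinh^2\frac{\lambda_{ij}}2$, which is exactly $\frac{u_i+u_j}2$ times the chord norm squared computed above. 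This yields the asserted identity, and the converse direction follows by reading the same chain of equalities backward. I do not expect a genuine obstacle here: the only point requiring care is tracking the signs produced by the Minkowski metric — in particular the $-1$ values of $\langle p_i, p_i\rangle_{2,1}$ and the sign in $\langle p_i,p_j\rangle_{2,1} = -\cosh\lambda_{ij}$ — since a single misplaced sign would flip the conformal factor, but these signs conspire consistently so that the final formula matches the spherical one with $\sin$ replaced by $\sinh$.
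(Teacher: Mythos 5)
Your proposal is correct and is precisely the paper's intended argument: the paper states that this lemma ``can be proved along the same lines'' as the spherical Lemma \ref{lem:DiscConf}, i.e.\ by differentiating $\langle p_i, p_j\rangle_{2,1} = -\cosh\lambda_{ij}$, using the tangency conditions, and substituting $\dot\lambda_{ij} = (u_i+u_j)\tanh\frac{\lambda_{ij}}2$ together with $\langle p_j - p_i, p_j - p_i\rangle_{2,1} = 4\sinh^2\frac{\lambda_{ij}}2$, which is exactly what you carry out. Your sign bookkeeping for the Minkowski metric is accurate throughout, so there is nothing to add.
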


\begin{proof}[Proof of Theorem \ref{thm:ConfFieldDiscreteHyp}, first part and of Theorem \ref{thm:HypIIDDiscrete}, third part.]
The same arguments as in the spherical case apply.
\end{proof}

The second part of Theorem \ref{thm:ConfFieldDiscreteHyp}, that is the equivalence between two discrete conformality theorems on the infinitesimal level, is also proved similarly to the Euclidean case.
The stereographic projection maps the upper half of the hyperboloid to the unit disk in the plane $x_0 = 1$, resulting in the Poincar\'e disk model of the hyperbolic plane.
The cross-ratio is invariant because the isometries of the Poincar\'e disk model are M\"{o}bius transformations.

\subsection{Circle patterns on closed hyperbolic surfaces} \label{sec:circleinfrigid}

Delaunay triangulations are defined not only on hyperbolic surfaces, but also on surfaces with complex projective structures. One can consider deformations of Delaunay triangulations with varying complex projective structures (See \cite{KMT2006,Lam2021}). The space of complex projective structures contains a sub-manifold consisting of hyperbolic structures, which is called the Fuchsian slice.  With the help of the hyperbolic Laplacian one can deduce the infinitesimal rigidity of circle patterns as well as the transversality between the deformation space of circle patterns and the Fuchsian slice. This is contained implicitly in \cite{Bobenko2010} and can be deduced from \cite{Schlenker2007}. The infinitesimal rigidity is crucial to the proof of the smoothness of the deformation space of circle patterns  \cite{BW2023,lam2024sym}.

\begin{theorem}\label{thm:disrigid}
	Given a Delaunay geodesic triangulation of a closed hyperbolic surface, every infinitesimal conformal deformation of a circle pattern staying within the hyperbolic slice must be trivial. 
\end{theorem}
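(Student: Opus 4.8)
The plan is to recognize the deformation as a $2$-eigenfunction of the discrete hyperbolic Laplacian and then to exploit that, on a Delaunay triangulation, $\triangle_h$ has no positive eigenvalues. First I would write the closed surface as $\mathbb{H}^2/\Gamma$ for a cocompact Fuchsian group $\Gamma$, so that the Delaunay triangulation and its circle pattern lift to $\Gamma$-equivariant data on $\mathbb{H}^2$. The hyperbolic-slice hypothesis keeps the deformation among hyperbolic structures, so an infinitesimal deformation of the circle pattern fixing the intersection angles of neighboring circumcircles is precisely an angle-preserving field of the kind appearing in Theorem \ref{thm:ConfFieldDiscreteHyp}. Lifting to the simply-connected cover, that theorem identifies it with a vertex-scaling conformal field whose conformal factors $u$ satisfy $\triangle_h u = 2u$.

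Next I would check that, since the deformation lives on the closed surface, the factors $u$ are $\Gamma$-invariant and hence descend to a genuine function on the finite vertex set of $\mathbb{H}^2/\Gamma$. As $\triangle_h$ is assembled from purely local edge and vertex weights, the eigenvalue equation $\triangle_h u = 2u$ then holds verbatim on the closed surface.

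The heart of the argument is spectral. Because the triangulation is Delaunay, the hyperbolic analog of Lemma \ref{lem:SpherWeightsPos} gives $c_{ij}\ge 0$ for every edge, whence $d_i = \sum_j c_{ij}\sinh^2\frac{\lambda_{ij}}2 > 0$ for every vertex and the inner product $\langle\cdot,\cdot\rangle_d$ is positive definite. By \eqref{eqn:LaplWeak} one has $\langle\triangle_h u, u\rangle_d = -\sum_{\{i,j\}} c_{ij}(u_j-u_i)^2 \le 0$, so $\triangle_h$ is negative semidefinite. Pairing the eigenvalue equation with $u$ gives $0 \ge \langle\triangle_h u, u\rangle_d = 2\langle u, u\rangle_d \ge 0$, which forces $\langle u, u\rangle_d = 0$ and hence $u\equiv 0$.

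Finally, $u\equiv 0$ forces the vertex-scaling field associated with the deformation to preserve every edge length, hence to be an infinitesimal isometry of the closed hyperbolic surface; as $\Gamma$ is non-elementary, the only such equivariant field is zero, and through the correspondence of Theorem \ref{thm:ConfFieldDiscreteHyp} the original angle-preserving deformation vanishes as well. I expect the main obstacle to be this passage from the simply-connected setting: Theorem \ref{thm:ConfFieldDiscreteHyp} is stated only for simply-connected surfaces, so the delicate point is to verify that the conformal factors produced on the cover are genuinely $\Gamma$-invariant and descend. Once $u$ is a well-defined function on the finite vertex set, the negative-semidefiniteness of $\triangle_h$ for Delaunay triangulations immediately excludes the positive eigenvalue $2$ and yields the claim.
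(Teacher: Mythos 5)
Your proposal is correct and follows essentially the same route as the paper: identify the conformal factors $u$ as a solution of $\triangle_h u = 2u$ via Theorem \ref{thm:ConfFieldDiscreteHyp}, then pair with $u$ in $\langle\cdot,\cdot\rangle_d$ and use the Delaunay condition $c_{ij}\ge 0$ to squeeze $\langle \triangle_h u, u\rangle_d$ between $0$ and $0$, forcing $u\equiv 0$. The covering-space scruple you flag is handled implicitly in the paper (the eigenvalue direction of Theorem \ref{thm:ConfFieldDiscreteHyp} is a local computation, and $u$ is uniquely determined by the edge-length derivatives since triangulations contain odd cycles, hence automatically $\Gamma$-invariant), and note that the paper takes triviality to mean $u\equiv 0$, so your final step deducing vanishing of the vector field itself is unnecessary.
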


\begin{proof}
	Suppose there is an infinitesimal deformation of a circle pattern staying on the slice of Fuchsian structures and preserving the length cross-ratios.  
	Then the change of hyperbolic edge length $\lambda$ satisfies
	\[
	\left( \log\sinh\frac{\lambda_{ij}}2 \right)^{\boldsymbol{\cdot}} = \frac{u_i + u_j}2
	\]
	for some $u:V \to \mathbb{R}$. Theorem \ref{thm:ConfFieldDiscreteHyp} implies that $\triangle_h u = 2u$, and hence
	\[
	\langle \triangle_h u, u \rangle_d = 2 \langle u, u \rangle_d = 2 \sum_i d_i u_i^2 \ge 0.
	\]
	On the other hand, for every $u$ one has
	\[
	\langle \triangle_h u, u \rangle_d = -\sum_{ij \in E} c_{ij} (u_j - u_i)^2 \le 0.
	\]
	It follows that $u\equiv 0$ and the infinitesimal deformation is trivial.
\end{proof}

Analogous statement holds for deformations preserving intersection angles, but in a weaker form.
\begin{corollary}\label{cor:circlerigid}
	Given a Delaunay geodesic triangulation of a closed hyperbolic surface, every infinitesimal deformation of a circle pattern preserving the intersection angles while preserving the hyperbolic structure must be trivial. 
\end{corollary}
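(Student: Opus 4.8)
The plan is to deduce the statement from Theorem \ref{thm:disrigid} by means of the $90^\circ$ rotation that, according to the second part of Theorem \ref{thm:ConfFieldDiscreteHyp}, interchanges the two discrete conformality notions. Let $\eta$ be an infinitesimal deformation of the circle pattern preserving all intersection angles and fixing the hyperbolic structure. On the universal cover $\widetilde{M} = \mathbb{H}^2$ the triangulation lifts, and $\eta$ lifts to a tangent vector field equivariant with respect to the Fuchsian group $\Gamma \subset \mathrm{Isom}^+(\mathbb{H}^2)$; here it is \emph{essential} that the hyperbolic structure, hence $\Gamma$, is kept fixed, so that the equivariance is with respect to a single representation. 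Let $J$ denote the field of $90^\circ$ rotations in the tangent spaces of $\mathbb{H}^2$. Since $\Gamma$ acts by orientation-preserving isometries, $J$ commutes with the $\Gamma$-action, so the rotated field $\xi = J\eta$ is again $\Gamma$-equivariant and descends to a tangent vector field on the closed surface.

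By the correspondence of Theorem \ref{thm:ConfFieldDiscreteHyp}, the fact that $\eta$ keeps the intersection angles of the circumcycles constant in the first order means precisely that $\xi$ is discretely conformal in the vertex-scaling sense, with a well-defined conformal factor $u \colon V \to \mathbb{R}$ satisfying $\triangle_h u = 2u$. From here I would repeat verbatim the sandwich estimate from the proof of Theorem \ref{thm:disrigid}:
\[
0 \le 2\langle u, u\rangle_d = \langle \triangle_h u, u\rangle_d = -\sum_{ij \in E} c_{ij}(u_j - u_i)^2 \le 0,
\]
where the outer inequalities use that the triangulation is Delaunay, so that all edge weights satisfy $c_{ij} \ge 0$ and all vertex weights $d_i > 0$ (the hyperbolic analogs of Lemmas \ref{lem:SpherWeightsPos} and \ref{lem:WeightsGeom}). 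Both extremes must therefore vanish, and positivity of the $d_i$ forces $u \equiv 0$. Thus $\xi$ preserves every edge length to first order, i.e. it is an infinitesimal isometry of the closed hyperbolic surface; since such a surface carries no nontrivial Killing field, $\xi \equiv 0$, and applying the inverse rotation gives $\eta = -J\xi \equiv 0$.

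The main obstacle is not the linear-algebra estimate, which is identical to that of Theorem \ref{thm:disrigid}, but the globalization of the rotation correspondence: Theorem \ref{thm:ConfFieldDiscreteHyp} is stated on a simply-connected surface, and one has to check that $\xi = J\eta$ and its conformal factor $u$ are genuinely defined on the closed quotient rather than merely on $\mathbb{H}^2$. This is exactly the point at which the extra hypothesis that the deformation preserves the hyperbolic structure is spent, and it is also the reason the conclusion is weaker than Theorem \ref{thm:disrigid}: without fixing $\Gamma$, the rotated field need not be equivariant for any single representation and thus need not descend, so one cannot transfer the eigenvalue equation $\triangle_h u = 2u$ to the closed surface.
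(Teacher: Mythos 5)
Your proposal takes exactly the paper's route: rotate the angle-preserving field by $J$, use the correspondence in Theorem \ref{thm:ConfFieldDiscreteHyp} to obtain a vertex-scaling-conformal field on the \emph{fixed} hyperbolic surface (your point about equivariance and descent being where the fixed-structure hypothesis is spent matches the paper's closing remark verbatim), and then invoke --- or, as you do, re-run --- the sign sandwich of Theorem \ref{thm:disrigid} to force $u \equiv 0$. One small caveat: your final flourish, deducing $\xi \equiv 0$ from the absence of Killing fields on a closed hyperbolic surface, is both unnecessary (triviality in the paper's sense is already secured once $u \equiv 0$ gives $\dot\lambda \equiv 0$) and not fully justified as stated, since a discrete tangent field preserving all edge lengths to first order is not automatically the restriction of a global Killing field --- that would require a separate infinitesimal rigidity argument for the geodesic triangulation.
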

\begin{proof}
	Suppose an infinitesimal deformation of a circle pattern preserving the intersection angles and the hyperbolic structure is given. Applying the $90^\circ$-rotation $J$, one obtains an infinitesimal deformation preserving the length cross ratios and the hyperbolic structure, which has to be trivial.
\end{proof}
 
The reason for a weaker statement is that even if an infinitesimal deformation stays within the hyperbolic slice, its $J$-rotation might not stay in the hyperbolic slice anymore. This can be deduced from the change in $SL(2,\mathbb{C})$-holonomy representation as cross ratios vary, while the hyperbolic structures have holonomy representations in $SL(2,\mathbb{R})$ (See \cite[Proposition 4.5]{lam2024sym}).

\bibliographystyle{amsplain}
\bibliography{lap}

\providecommand{\bysame}{\leavevmode\hbox to3em{\hrulefill}\thinspace}
\providecommand{\MR}{\relax\ifhmode\unskip\space\fi MR }
\providecommand{\MRhref}[2]{%
  \href{http://www.ams.org/mathscinet-getitem?mr=#1}{#2}
}
\providecommand{\href}[2]{#2}
\begin{thebibliography}{10}

\bibitem{BobenkoIzmestiev2008}
Alexander~I. Bobenko and Ivan Izmestiev, \emph{Alexandrov's theorem, weighted
  {D}elaunay triangulations, and mixed volumes}, Ann. Inst. Fourier (Grenoble)
  \textbf{58} (2008), no.~2, 447--505.

\bibitem{Bobenko2010}
Alexander~I. Bobenko, Ulrich Pinkall, and Boris~A. Springborn, \emph{Discrete
  conformal maps and ideal hyperbolic polyhedra}, Geom. Topol. \textbf{19}
  (2015), no.~4, 2155--2215.

\bibitem{Bobenko2007}
Alexander~I. Bobenko and Boris~A. Springborn, \emph{A discrete
  {L}aplace-{B}eltrami operator for simplicial surfaces}, Discrete Comput.
  Geom. \textbf{38} (2007), no.~4, 740--756.

\bibitem{BW2023}
Francesco Bonsante and Michael Wolf, \emph{Projective rigidity of circle
  packings}, Adv. Math. \textbf{461} (2025), Paper No. 110024, 51.

\bibitem{Ciarlet1978}
Philippe~G. Ciarlet, \emph{The finite element method for elliptic problems},
  Studies in Mathematics and its Applications, vol. Vol. 4, North-Holland
  Publishing Co., Amsterdam-New York-Oxford, 1978.

\bibitem{ColinDeVerdiere1998}
Yves Colin~de Verdi\`ere, \emph{Spectres de graphes}, Cours Sp\'{e}cialis\'{e}s
  [Specialized Courses], vol.~4, Soci\'{e}t\'{e} Math\'{e}matique de France,
  Paris, 1998.

\bibitem{CrapoWhiteley1994}
Henry Crapo and Walter Whiteley, \emph{Spaces of stresses, projections and
  parallel drawings for spherical polyhedra}, Beitr\"age Algebra Geom.
  \textbf{35} (1994), no.~2, 259--281.

\bibitem{DebinFillastre2022}
Cl\'ement Debin and Fran\c~cois Fillastre, \emph{Hyperbolic geometry of shapes
  of convex bodies}, Groups Geom. Dyn. \textbf{16} (2022), no.~1, 115--140.
  \MR{4424965}

\bibitem{Duffin1959}
R.~J. Duffin, \emph{Distributed and lumped networks}, J. Math. Mech. \textbf{8}
  (1959), 793--826.

\bibitem{Eckmann1945}
Beno Eckmann, \emph{Harmonische {F}unktionen und {R}andwertaufgaben in einem
  {K}omplex}, Comment. Math. Helv. \textbf{17} (1945), 240--255.

\bibitem{Fillastre2013}
Fran\c{c}ois Fillastre, \emph{Fuchsian convex bodies: basics of
  {B}runn-{M}inkowski theory}, Geom. Funct. Anal. \textbf{23} (2013), no.~1,
  295--333.

\bibitem{FillastreIzmestiev2017}
Fran\c{c}ois Fillastre and Ivan Izmestiev, \emph{Shapes of polyhedra, mixed
  volumes and hyperbolic geometry}, Mathematika \textbf{63} (2017), no.~1,
  124--183.

\bibitem{Fillastre2019}
Fran\c{c}ois Fillastre and Andrea Seppi, \emph{Spherical, hyperbolic, and other
  projective geometries: convexity, duality, transitions}, Eighteen essays in
  non-{E}uclidean geometry, IRMA Lect. Math. Theor. Phys., vol.~29, Eur. Math.
  Soc., Z\"urich, 2019, pp.~321--409.

\bibitem{Galeeva82}
R.~F. Galeeva, V.~V. Pelipenko, and D.~D. Sokolov, \emph{Infinitesimal bendings
  of surfaces in pseudo-{E}uclidean space}, Uspekhi Mat. Nauk \textbf{37}
  (1982), no.~5(227), 175--176.

\bibitem{Glickenstein2011}
David Glickenstein, \emph{Discrete conformal variations and scalar curvature on
  piecewise flat two- and three-dimensional manifolds}, J. Differential Geom.
  \textbf{87} (2011), no.~2, 201--237.

\bibitem{Glickenstein2016}
\bysame, \emph{Euclidean formulation of discrete uniformization of the disk},
  Geom. Imaging Comput. \textbf{3} (2016), no.~3-4, 57--80. \MR{3790752}

\bibitem{Glickenstein2024}
\bysame, \emph{Geometric triangulations and discrete {L}aplacians on manifolds:
  an update}, Comput. Geom. \textbf{118} (2024), Paper No. 102063, 26.

\bibitem{Glickenstein2017}
David Glickenstein and Joseph Thomas, \emph{Duality structures and discrete
  conformal variations of piecewise constant curvature surfaces}, Adv. Math.
  \textbf{320} (2017), 250--278.

\bibitem{Gu2018}
Xianfeng~David Gu, Feng Luo, Jian Sun, and Tianqi Wu, \emph{A discrete
  uniformization theorem for polyhedral surfaces}, J. Differential Geom.
  \textbf{109} (2018), no.~2, 223--256. \MR{3807319}

\bibitem{Hilbert1910}
D.~Hilbert, \emph{Grundz{\"u}ge einer allgemeinen {Theorie} der linearen
  {Integralgleichungen}. {Sechste} {Mitteilung}.}, Nachr. Ges. Wiss.
  G{\"o}ttingen, Math.-Phys. Kl. \textbf{1910} (1910), 355--419 (German).

\bibitem{Izmestiev2010}
Ivan Izmestiev, \emph{The {C}olin de {V}erdi\`ere number and graphs of
  polytopes}, Israel J. Math. \textbf{178} (2010), 427--444.

\bibitem{IzmestievProsanovWu2024}
Ivan Izmestiev, Roman Prosanov, and Tianqi Wu, \emph{Prescribed curvature
  problem for discrete conformality on convex spherical cone-metrics}, Adv.
  Math. \textbf{437} (2024), Paper No. 109439, 35.

\bibitem{IZ24}
Ivan Izmestiev and Darja Zierau, \emph{Infinitesimal bendings of quadrics,
  revisited}, in preparation.

\bibitem{Kenyon2002}
R.~Kenyon, \emph{The {L}aplacian and {D}irac operators on critical planar
  graphs}, Invent. Math. \textbf{150} (2002), no.~2, 409--439.

\bibitem{Kenyon2018}
Richard Kenyon, Wai~Yeung Lam, Sanjay Ramassamy, and Marianna Russkikh,
  \emph{Dimers and circle patterns}, Ann. Sci. \'{E}c. Norm. Sup\'{e}r. (4)
  \textbf{55} (2022), no.~3, 865--903.

\bibitem{KMT2006}
Sadayoshi Kojima, Shigeru Mizushima, and Ser~Peow Tan, \emph{Circle packings on
  surfaces with projective structures: a survey}, Spaces of {K}leinian groups,
  London Math. Soc. Lecture Note Ser., vol. 329, Cambridge Univ. Press,
  Cambridge, 2006, pp.~337--353. \MR{2258757}

\bibitem{Lam2021}
Wai~Yeung Lam, \emph{Quadratic differentials and circle patterns on complex
  projective tori}, Geom. Topol. \textbf{25} (2021), no.~2, 961--997.

\bibitem{lam2024discrete}
Wai~Yeung Lam, \emph{Discrete harmonic maps between hyperbolic surfaces}, 2024,
  arXiv 2405.02205.

\bibitem{lam2024sym}
\bysame, \emph{Pullback of symplectic forms to the space of circle patterns},
  2024, arXiv 2404.17458.

\bibitem{Lam2015a}
Wai~Yeung Lam and Ulrich Pinkall, \emph{Holomorphic vector fields and quadratic
  differentials on planar triangular meshes}, Advances in discrete differential
  geometry, Springer, [Berlin], 2016, pp.~241--265.

\bibitem{Lam2015}
\bysame, \emph{Isothermic triangulated surfaces}, Math. Ann. \textbf{368}
  (2017), no.~1-2, 165--195.

\bibitem{Leibon2002}
Gregory Leibon, \emph{Characterizing the {D}elaunay decompositions of compact
  hyperbolic surfaces}, Geom. Topol. \textbf{6} (2002), 361--391.

\bibitem{Lovasz2001}
L\'aszl\'o Lov\'asz, \emph{Steinitz representations of polyhedra and the
  {C}olin de {V}erdi\`ere number}, J. Combin. Theory Ser. B \textbf{82} (2001),
  no.~2, 223--236. \MR{1842113}

\bibitem{Luo2004}
Feng Luo, \emph{Combinatorial {Y}amabe flow on surfaces}, Commun. Contemp.
  Math. \textbf{6} (2004), no.~5, 765--780.

\bibitem{McMullen1973}
P.~McMullen, \emph{Representations of polytopes and polyhedral sets},
  Geometriae Dedicata \textbf{2} (1973), 83--99. \MR{326574}

\bibitem{Minkowski1903}
Hermann Minkowski, \emph{Volumen und {O}berfl\"{a}che}, Math. Ann. \textbf{57}
  (1903), no.~4, 447--495. \MR{1511220}

\bibitem{Pinkall1993}
Ulrich Pinkall and Konrad Polthier, \emph{Computing discrete minimal surfaces
  and their conjugates}, Experiment. Math. \textbf{2} (1993), no.~1, 15--36.

\bibitem{Schlenker2007}
Jean-Marc Schlenker, \emph{Small deformations of polygons and polyhedra},
  Trans. Amer. Math. Soc. \textbf{359} (2007), no.~5, 2155--2189.

\bibitem{Smirnov2010}
Stanislav Smirnov, \emph{Discrete complex analysis and probability},
  Proceedings of the {I}nternational {C}ongress of {M}athematicians. {V}olume
  {I}, Hindustan Book Agency, New Delhi, 2010, pp.~595--621.

\bibitem{Stephenson2005}
Kenneth Stephenson, \emph{Introduction to circle packing: The theory of
  discrete analytic functions}, Cambridge University Press, Cambridge, 2005.

\bibitem{Zhang2014}
Min Zhang, Ren Guo, Wei Zeng, Feng Luo, Shing-Tung Yau, and Xianfeng Gu,
  \emph{The unified discrete surface ricci flow}, Graphical Models \textbf{76}
  (2014), no.~5, 321--339, Geometric Modeling and Processing 2014.

\end{thebibliography}

\end{document}